\numberwithin{equation}{section}
\newcommand{\beq}{\begin{equation}}
\newcommand{\eeq}{\end{equation}}
\newcommand{\beqs}{\begin{eqnarray*}}
\newcommand{\eeqs}{\end{eqnarray*}}
\newcommand{\beqn}{\begin{eqnarray}}
\newcommand{\eeqn}{\end{eqnarray}}
\newcommand{\beqa}{\begin{array}}
\newcommand{\eeqa}{\end{array}}
\newcommand{\p}{\partial}
\newcommand{\eps}{\varepsilon}
\newcommand{\Om}{\Omega}
\newcommand{\pom}{{\p\Om}}
\newcommand{\bom}{{\overline\Om}}
\newtheorem{prop}{Proposition}[section]
\newtheorem{theo}[prop]{Theorem}
\newtheorem{lem}[prop]{Lemma}
\newtheorem{cor}[prop]{Corollary}
\newtheorem{rem}[prop]{Remark}
\title{A potential theory for the $k$-curvature equation}
\author {Qiuyi Dai\ \ \ \ \ \ Xu-jia Wang\ \ \ \ \ \ Bin Zhou}
\address{Qiuyi Dai, College of Mathematics and Computer Science,
Hunan Normal University, Changsha 410081, China.}
\address{Xu-jia Wang, Mathematical Sciences Institute,
The Australian National University, Canberra, ACT 2601, Australia.}
\address{Bin Zhou, School of Mathematical Sciences, Peking
University, Beijing 100871, China; and Mathematical Sciences Institute,
The Australian National University, Canberra, ACT 2601, Australia.}
\subjclass{Primary 35J60; Secondary 35D40.}
\keywords{Curvature measure, subharmonic functions, weak continuity.}
\thanks {The first author was supported by NNSFC 11271120.
The second author was supported by ARC DP120102718 and ARC FL130100118.
The third author was supported by ARC DECRA and  NNSFC 11101004.}
\email{qiuyidai@aliyun.com, \ \ xu-jia.wang@anu.edu.au, \ \ bin.zhou@anu.edu.au}
\begin{document}
\maketitle

\bibliographystyle{plain}

\begin{abstract}

In this paper, we introduce a potential theory for the $k$-curvature equation,
which can also be seen as a PDE approach to curvature measures.
We assign a measure to  a bounded,
upper semicontinuous function which is strictly subharmonic
with respect to the $k$-curvature operator, and
establish the weak continuity of the measure.

\end{abstract}

\maketitle

\baselineskip=16.4pt
\parskip=3pt

\section{Introduction}


The potential theory for nonlinear elliptic equations has been extensively studied.
For quasilinear elliptic equations, we refer the reader to \cite {HKM}.
For the complex Monge-Amp\`ere equation, the weak convergence for bounded,
monotone pluri-subharmonic functions was established in \cite{BT1, BT2}.
See \cite {Ce1, Ce2, Kl, Kol, BEGZ} for further development. For the $k$-Hessian equations,
analogous but stronger results were obtained in \cite{TW2} (see also \cite {W2}).

In a recent paper \cite {DTW},  the mean curvature measure was
established for upper semicontinuous functions which are subharmonic
with respect to the mean curvature operator,
which is a notion weaker than
the generalized solution studied by Giusti \cite{G1, G2}.
The purpose of this paper is to establish the $k$-curvature measures,
where $1 < k < n$,
for upper semicontinuous functions which are subharmonic
with respect to the $k$-curvature operator, as well as the weak convergence of measures.

For a function $u\in{C^2(\Omega)}$,
where $\Om$ is a bounded domain in $\Bbb R^n$,
the $k$-curvature of the graph of $u$ is the $k^{th}$ elementary
symmetric polynomial of the principal curvatures $(\kappa_1, \cdots, \kappa_n)$
of the graph of $u$. 
It can also be written as
\beq
H_k[u]=\sigma_k\Big(D\big(\frac{Du}{w}\big)\Big),
\eeq
where $w=\sqrt{1+|Du|^2}$, and $\sigma_k$
denotes the sum of the principal minors of order $k$
of the matrix $D\big(\frac{Du}{w}\big)$.
In particular, $H_1$ is the mean curvature and $H_n$ is Gauss curvature.

We say a function $u$ in $C^2(\Omega)$ is
{\it $H_k$-subharmonic} if $H_j[u]\geq 0$ for $j=1,\cdots, k$,
namely if  the principal curvatures
$\lambda=(\lambda_1, \cdots, \lambda_n)$ of the graph of $u$ lies in
$\bar \Gamma_k$, the closure of the convex cone
\beq
\Gamma_k=\{\lambda\in{\mathbb R^n}\ |\  \sigma_j(\lambda)>0, j=1,\cdots, k\}.
\eeq
This $H_k$-subharmonicity is equivalent to the {\it $H_k$-admissibility} in
\cite {CNS}. We use the terminology subharmonicity, instead of admissibility,
to reflect the following extension to nonsmooth function in the viscosity sense,
as in \cite {TW2, DTW}.
Namely an upper semicontinuous function $u:\ \Omega\to [-\infty, \infty)$
is called subharmonic with respect to the $k$-curvature operator $H_k$,
or {\it $H_k$-subharmonic} for short,
if the set $\{u=-\infty\}$ has measure zero and
for any open set $\omega\Subset\Omega$ and any smooth
function $h\in{C^2(\overline\omega)}$ with $H_k[h]\leq 0$,
$h\geq u$ on $\p \omega$, one has $h\geq u$ in $\omega$.
This is equivalent to the inequality $H_k[u]\geq 0$
holding in the viscosity sense \cite{T1}.
We denote the set of all $H_k$-subharmonic functions on $\Omega$ by $\mathcal{SH}_k(\Omega)$.
We say $u\in \mathcal{SH}_k(\Omega)$ is {\it strictly $H_k$-subharmonic} if $h>u$
for any $\omega\Subset\Omega$ and any continuous function $h$
with $H_k[h]=0$ in the viscosity sense, $h\geq u$ on $\partial\omega$.

The main result of the paper is the following

\begin{theo}
(i) For any bounded strictly $H_k$-subharmonic function $u$,
there exists an associated Radon measure $\mu_k[u]$,
such that $H_k[u]$ is the density function of $\mu_k[u]$ if $u\in C^2$.
\newline
(ii) If $\{u_j\}$ is a sequence of bounded strictly $H_k$-subharmonic
functions which converges to a bounded strictly $H_k$-subharmonic $u$ a.e.,
then $\mu_k[u_j]\to \mu_k[u]$ weakly.
\end{theo}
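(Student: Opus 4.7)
The plan is to follow the strategy developed for $k$-Hessian measures in \cite{TW2} and for mean-curvature measures in \cite{DTW}, adapting it to the full $k$-curvature operator. For part (i), I would first approximate the bounded strictly $H_k$-subharmonic function $u$ by a monotone decreasing sequence of smooth strictly $H_k$-subharmonic functions $u^\eps$. Such an approximation can be produced by a standard sup-convolution followed by a mild perturbation restoring strictness, or by solving the Dirichlet problem $H_k[u^\eps]=0$ with boundary data $u\ast\rho_\eps$ on a shrinking family of subdomains. For each smooth $u^\eps$ the measure $\mu_k[u^\eps]:=H_k[u^\eps]\,dx$ is classically defined, and the task reduces to producing a uniform tightness bound and a uniqueness result for the weak limit.

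The central analytic step is the local a priori estimate
\[
\int_{\Om'} H_k[v]\,dx \le C\big(\Om',\Om,\|v\|_{L^\infty(\Om)}\big)
\]
for every smooth $v\in\mathcal{SH}_k(\Om)$ and $\Om'\Subset\Om$. The proof exploits the divergence structure of $\sigma_k(D(Dv/w))$: using the Newton tensor of $D(Dv/w)$ one obtains an identity expressing $H_k[v]$ as a divergence modulo lower-order terms, after which multiplication by a smooth cutoff and integration by parts reduces the bound to integrals of $\sigma_{k-1}$-type quantities paired against $|Dv|/w\le 1$. Iterating in $k$ and invoking the Maclaurin inequalities valid on $\Gamma_k$ closes the estimate. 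This bound yields the tightness needed to extract $\mu_k[u]$ as a weak-$*$ subsequential limit of $\mu_k[u^\eps]$. Independence of the approximating sequence, and in particular the equality $\mu_k[u]=H_k[u]\,dx$ when $u\in C^2$, then follows from the comparison principle for continuous $H_k$-subharmonic functions; strict $H_k$-subharmonicity of $u$ is exactly what lets us sandwich any two candidate limits between two families of smooth approximations.

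For part (ii), I would first treat the fully smooth case: if the $u_j\in C^2$ are strictly $H_k$-subharmonic and $u_j\to u\in C^2$ a.e., then the local $L^1$ bound above combined with the concavity of $\sigma_k^{1/k}$ on $\Gamma_k$ gives local $C^{1,\alpha}$ compactness of $\{u_j\}$ along subsequences, hence $H_k[u_j]\,dx\to H_k[u]\,dx$ weakly. For general bounded strictly $H_k$-subharmonic $u_j$ and $u$ I would approximate each $u_j$ by smooth $u_j^\eps$ as in part (i) and run a diagonal argument. Once one knows $\mu_k[u_j^\eps]\to \mu_k[u^\eps]$ for each fixed $\eps$, together with $\mu_k[u^\eps]\to \mu_k[u]$ and $\mu_k[u_j^\eps]\to \mu_k[u_j]$ as $\eps\to 0$ uniformly in $j$, a standard double-limit argument for weak convergence of Radon measures delivers $\mu_k[u_j]\to\mu_k[u]$. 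The strict $H_k$-subharmonicity of the limit $u$ is again the decisive hypothesis — via the comparison principle it rules out any loss of mass onto degenerate sets.

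The main obstacle, relative to the $k$-Hessian theory of \cite{TW2}, is the presence of the factor $w=\sqrt{1+|Du|^2}$ in the operator, which destroys the pure divergence form of $\sigma_k$ and introduces extra terms in every integration-by-parts identity. Controlling these terms uniformly in $\|Du\|_{L^\infty}$ is possible only through the geometric bound $|Du|/w<1$ together with the algebraic inequalities of $\sigma_k$ on $\Gamma_k$, and balancing them carefully is the heart of the local mass estimate. A secondary but non-trivial point is to arrange the smooth approximations $u^\eps$ so that they remain strictly $H_k$-subharmonic, so that the comparison principle can be invoked at every stage of the argument.
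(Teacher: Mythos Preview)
Your proposal has two genuine gaps that stem from features of the $k$-curvature operator distinguishing it from both the $k$-Hessian and the mean-curvature cases.

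\textbf{Approximation.} You propose to produce smooth approximants by ``standard sup-convolution followed by a mild perturbation.'' Sup-convolution does preserve the viscosity inequality $H_k[\cdot]\ge 0$, but it yields only a semi-convex function; to make it smooth one must mollify, i.e.\ take convex combinations of translates. The class $\mathcal{SH}_k$ is \emph{not convex} for $1<k<n$ --- adding even a convex function to an $H_k$-subharmonic function can destroy the property --- so mollification need not preserve $H_k$-subharmonicity, and ``a mild perturbation restoring strictness'' is unavailable for the same reason. Your alternative, solving $H_k[u^\eps]=0$ with mollified boundary data, produces the $H_k$-harmonic lifting, which dominates $u$ but has no reason to converge to $u$ in the interior. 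The paper circumvents this by an \emph{obstacle problem}: one takes smooth $\varphi_j\searrow u$, forms the $H_k$-subharmonic envelope below $\varphi_j$, and approximates the envelope by solving a penalized equation $\{H_k[u]\}^{1/k}=-\beta_\eps(\varphi_j-u)$. This yields smooth functions in $\mathcal{SH}_k$ with $H_k>0$ converging to $u$.

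\textbf{Weak continuity.} You assert that for smooth $u_j\to u$ a.e., ``the local $L^1$ bound \ldots\ combined with the concavity of $\sigma_k^{1/k}$ gives local $C^{1,\alpha}$ compactness.'' This is false: the $k$-curvature equation has \emph{no interior regularity} for $1<k<n$, and no $C^{1,\alpha}$ estimate follows from a.e.\ convergence together with an $L^1$ bound on $H_k[u_j]$. Without this, your diagonal argument collapses, since you never show that two approximating sequences with the same a.e.\ limit yield the same measure; the appeal to ``the comparison principle'' is not a proof. The paper's mechanism is different: it establishes a monotonicity inequality $\int_\Omega H_k[u]\ge\int_\Omega H_k[v]$ whenever $u=v$ and $u_\gamma>v_\gamma$ on $\partial\Omega$ (nontrivial precisely because the linear interpolation $tu+(1-t)v$ need not lie in $\mathcal{SH}_k$), and then, given two smooth sequences $u_j,v_j\to u$, uses Perron liftings in nested annuli about $\partial B_r$ to arrange $\tilde u_j<\hat v_j$ on one sphere and $\tilde u_j>\hat v_j$ on a larger one, whence $\int_{B_r}H_k[v_j]\le\int_{B_{r+t}}H_k[u_j]$ for every $t>0$. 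Strict $H_k$-subharmonicity of $u$ enters exactly here --- it forces the lifting to be \emph{strictly} above $u$, which is what makes the second inequality hold --- not as a vague device to ``rule out loss of mass onto degenerate sets.''
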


The notion of curvature measures in Theorem 1.1
is closely related to that of Federer's \cite {F}.
The curvature measures in Theorem 1.1
are defined in the domain $\Om$
while Federer's are defined in the space $\Bbb R^{n+1}$,
supported on the graph of the function.
But they are essentially the same as
the curvature measures in Theorem 1.1
are projections of Federer's from $\Bbb R^{n+1}$ to $\Bbb R^n$.
When $k=n$, the $k$-curvature is the Gauss curvature and
the corresponding curvature measure has  been extensively studied \cite {S}.

Federer's curvature measures apply to general sets in the Euclidean space $\Bbb R^{n+1}$
of positive reach. If the boundary of the set is the graph of a function,
the positive reach condition means the function is semi-convex,
which is a condition different from the $H_k$-subharmonicity in Theorem 1.1.

The weak convergence in part (ii) is
a stability of the curvature measures $\mu_k[u]$,
as the convergence is independent of the sequence $\{u_j\}$.
It also allows us to assign a measure
to a bounded strictly $H_k$-subharmonic function $u$.
The stability of curvature measures is useful in the numerical computation
of curvature measures, see \cite {CCD, CCL}.

For the mean curvature operator ($k=1$), this result was established in \cite{DTW}.
In this paper, we will use some ideas from \cite{DTW} and \cite{HKM}, such as
the Perron lifting in \S3. The argument in \S5 was also inspired
by that in \cite{DTW}.
However, new difficulties arise in treating the cases $1<k<n$.
One is that the operator $H_k$ is fully nonlinear and there is no interior regularity
for the equation, as with the case of the complex Monge-Amp\`ere equation.
But the main obstacle is that the set of $H_k$-subharmonic functions is not convex
and so we cannot use the perturbation or mollification techniques.
For example, if $u$ is $H_k$-subharmonic,
 $u+\eps\phi$ may fail to be $H_k$-subharmonic even if $\phi$ is {\it convex}.
An simple example is: letting $u$ be a smooth $H_1[u]$-subharmonic function in $\mathbb R^2$ satisfying $Du(0)=0$, $u_{xx}(0)=-1$, $u_{yy}(0)=1$ and $u_{xy}(0)=0$. Then one can check that $H_1[u+\epsilon y]<0$ at $0$.

In this paper we introduce a deformation technique,
by studying the Dirichlet problem of an associated obstacle problem.
This deformation argument provides an approximation to
a nonsmooth $H_k$-subharmonic function
by smooth $H_k$-subharmonic functions of positive $k$-curvature.
This is a key ingredient in the argument of the paper.
This technique can also be used to give a new proof of the global
regularization of pluri-subharmonic functions on compact K\"ahler manifolds.
Namely a pluri-subharmonic function on a compact K\"ahler manifold
can be approximated by smooth ones (see Appendix 2).
Also because we cannot use the perturbation technique,
we have to assume the strict $H_k$-subharmonicity condition in Theorem 1.1.

The organisation of the paper is as follows.
In \S2, we prove a monotonicity inequality for the $k$-curvature operators.
The monotonicity for $k$-Hessian operators can be obtained
by a simple integration by parts \cite{TW1}.
But for the $k$-curvature operators, the computation is much more complicated.

In \S3 we introduce the Perron lifting.
In \S4 we prove that every $H_k$-subharmonic function
can be approximated locally by a sequence of smooth
$H_k$-subharmonic functions.
The approximation for $H_1$-subharmonic functions
was obtained in \cite{DTW} by Perron lifting.
This technique does not work when $k>1$, due to the lack of interior regularity of
the equation $H_k[u]=0$.
In this paper we prove the smooth approximation
by solving a sequence of  Dirichlet problems with obstacles
and introducing the notion of $H_k$-subharmonic envelope.

In \S5 we prove Theorem 1.1,
by making a perturbation for a sequence
of $H_k$-subharmonic functions in an annulus,
and using the monotonicity formula established in \S2.
To keep the $H_k$-subharmonicity under the perturbation,
we need the assumption of the strict $H_k$-subharmonicity in Theorem 1.1.

In Appendix I we prove the existence of solutions to the Dirichlet problems
with obstacle, which is needed in Section 4.
As an application we prove in Appendix I\!I
that a pluri-subharmonic function on a compact K\"ahler manifold
can be approximated by smooth ones.

\vskip20pt

\section{An integral monotonicity inequality}


Denote $A=D\left(\frac{Du}{w}\right)=(a_{ij})$, where
\beq
a_{ij}=\frac{u_{ij}}{w}-\frac{u_{il}u_lu_j}{w^3}.
\eeq
Following \cite{CNS}, it can be written as
\beq
a_{ij}=\frac{1}{w}u_{ip}b^{pq}b^{qj},
\eeq
where $b^{ij}=\delta_{ij}-\frac{u_iu_j}{w(1+w)}$.
We have \cite {T1}
\beq
H_k[u]=\frac{1}{k!}\sum \Big(\begin{array}{ccc}
     i_1 & \cdots & i_k \\j_1 & \cdots & j_k\end{array}
    \Big)a_{i_1j_1}\cdots a_{i_kj_k}.
\eeq
By proposition 4.1 in \cite{Re}, $H_k$ can be written in the divergence form
\beq
H_k[u]=\frac{1}{k}\Big(H^{ij}\frac{u_j}{w}\Big)_i,
\eeq
where a subscript of a function denotes partial derivative, namely
$u_j=u_{x_j}$, $u_{ij}=u_{x_ix_j}$, and
\beqn
H^{ij} &=&\frac{\p H_k}{\p a_{ij}}\\
        &=&\frac{1}{(k-1)!}\sum \Big(\begin{array}{cccc}
          i_1 & \cdots & i_{k-1} & i \\ j_1 & \cdots & j_{k-1} & j
          \end{array}\Big)a_{i_1j_1}\cdots a_{i_{k-1}j_{k-1}}.\nonumber
   \eeqn
By the divergence theorem,
$$\int_{\Omega}H_k[u]=\int_{\p \Omega}X_u\cdot \gamma,$$
where $\gamma$ is the unit outer normal of $\p \Omega$, and
$X_u=(H^{1j}\frac{u_j}{w}, \cdots, H^{nj}\frac{u_j}{w})$.

\begin{lem}
Suppose $u, v\in\mathcal{SH}_k(\Omega)\cap C^2(\overline\Omega)$.
If $u=v$ and $Du=Dv$ on $\p \Omega$, then
\beq
\int_{\Omega}H_k[u]=\int_{\Omega}H_k[v].
\eeq
\end{lem}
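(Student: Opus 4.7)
The plan is to invoke the divergence form (2.4) of $H_k$ and, via the divergence theorem, convert each side of (2.6) into the boundary flux
\[
\frac{1}{k}\int_{\p\Omega} H^{ij}[u]\frac{u_j}{w}\gamma_i\,d\sigma
\]
(and analogously for $v$). It then suffices to show that the two flux densities coincide pointwise on $\p\Omega$.

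Fix a boundary point $p_0$ and choose Euclidean coordinates so that $\gamma(p_0)=e_n$; such a rotation leaves $H_k$ invariant. Setting $\phi=u-v$, the conditions $\phi=0$ and $D\phi=0$ on $\p\Omega$ imply, via tangential differentiation at $p_0$, that $D^2\phi(p_0)=\mu\,\gamma\otimes\gamma$ for some scalar $\mu$. In these coordinates this reads $u_{ij}(p_0)=v_{ij}(p_0)$ for all $(i,j)\neq(n,n)$, while $u_{nn}(p_0)$ may differ from $v_{nn}(p_0)$. Hence the lemma reduces to showing that the flux density $\frac{1}{k}H^{nj}[u]\frac{u_j}{w}$ at $p_0$ is independent of $u_{nn}(p_0)$.

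This is the combinatorial core. From (2.5), the generalized Kronecker delta $\delta^{i_1\cdots i_{k-1}\,n}_{j_1\cdots j_{k-1}\,j}$ is nonzero only when the two sequences are permutations of a common set of distinct indices; in particular every summation index $i_\alpha$ must differ from $n$, so $H^{nj}$ is a polynomial in the entries $a_{pq}$ with $p<n$ only. By (2.1), each such $a_{pq}=u_{pq}/w - u_{pl}u_lu_q/w^3$ involves $u$ only through $Du$ and through second derivatives $u_{pm}$ with $p<n$---none of which is $u_{nn}$. Combined with $Du(p_0)=Dv(p_0)$ and the previous paragraph, this yields $a_{pq}[u](p_0)=a_{pq}[v](p_0)$ for all $p<n$, hence $H^{nj}[u](p_0)=H^{nj}[v](p_0)$, and the desired equality of flux densities at $p_0$.

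The only nontrivial step is this combinatorial observation: the component of the flux vector $X_u$ in the direction $\gamma$ omits any dependence on the pure $\gamma\gamma$-second-derivative of $u$ at $p_0$. It is the $k$-curvature counterpart of the divergence-free property of the Newton tensor that underpins the analogous monotonicity identity for the $k$-Hessian operators.
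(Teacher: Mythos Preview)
Your argument is correct and follows the same line as the paper's proof: apply the divergence theorem to the form (2.4), rotate coordinates at each boundary point so that $\gamma=e_n$, and observe that the flux density $H^{nj}\frac{u_j}{w}$ involves only $Du$ and the entries $a_{pq}$ with $p<n$, hence is independent of $u_{nn}$. The only place where the paper is more explicit is the rotation step: you assert that the rotation ``leaves $H_k$ invariant,'' but what is actually needed is that the scalar $X_u\cdot\gamma$ is invariant under orthogonal change of coordinates, which the paper verifies by a direct computation; this follows easily, however, from the fact that $a_{ij}$, and hence $H^{ij}$, transforms tensorially under orthogonal maps, so your abbreviation is harmless.
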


\begin{proof}
First, we show that $X_u\cdot \gamma$ is invariant under orthogonal transformation.
For convenience, by (2.2), we rewrite $H^{ij}$ as
$$H^{ij}=w\frac{\p H_k}{\p u_{ip}}b_{pq}b_{qj},$$
where $b_{ij}=\delta_{ij}+\frac{u_iu_j}{1+w}.$
Let $x=Py$ and denote $\tilde u(y)=u(x)$, where $P=(p_{ij})$ is an orthogonal matrix.
Then
\begin{eqnarray*}
&&\tilde w=w, \\
&&\tilde b_{ij}=\delta_{ij}+\frac{u_\alpha u_\beta p_{\alpha i}p_{\beta j}}{1+w},\\
&&\frac{\p H_k[\tilde u]}{\p \tilde u_{y_iy_j}}=\frac{\p H_k}{\p u_{\alpha\beta}} p_{\alpha i}p_{\beta j}.
\end{eqnarray*}
Note that $\tilde u_{y_i}=u_{x_\alpha}p_{\alpha i}$ and $\tilde \gamma_i=p_{\alpha i}\gamma_\alpha$.
Hence,
\beqs
X_{\tilde u}\cdot\tilde \gamma
 &=&\tilde H^{ij}\frac{\tilde u_j}{\tilde w}\tilde \gamma_i\\
 &=&\frac{\p H_k}{\p u_{\nu\mu}} p_{\nu i}p_{\mu m}
\cdot (\delta_{ml} + \frac{u_\alpha u_\beta p_{\alpha m}p_{\beta l}}{1+w})
    \cdot (\delta_{lj}+\frac{u_\alpha u_\beta p_{\alpha l}p_{\beta j}}{1+w}) \cdot
     u_rp_{rj} p_{s i}\gamma_s\\
 &=&\frac{\p H_k}{\p u_{s\mu}} \cdot (p_{\mu l} + \frac{u_\mu u_\beta p_{\beta l}}{1+w})
    \cdot (p_{lr}+\frac{u_\alpha u_r p_{\alpha l}}{1+w}) \cdot
     u_r\gamma_s\\
&=&\frac{\p H_k}{\p u_{s\mu}} \cdot \left[\delta_{\mu r} +2\frac{u_\mu u_r}{1+w}+\frac{u^2_\alpha u_ru_\mu}{(1+w)^2}\right]    \cdot      u_r\gamma_s\\
&=&\frac{\p H_k}{\p u_{s\mu}}\cdot (\delta_{\mu p}+\frac{u_\mu u_p}{1+w}) 
\cdot (\delta_{pr}+\frac{u_r u_p}{1+w})\cdot  u_r\gamma_s\\
&=& X_u\cdot \gamma.
\eeqs

Now for any given point $p\in \p \Omega$,
by a translation and rotation of the coordinates,
we may assume that $p$ is the origin and
locally $\p \Omega$ is given by $x_n=\rho(x')$ such that the inner normal
of $\p \Omega$ is $(0, \cdots, 0,1)$ at $p$. By $u=v$ on $\p \Omega$, we have
\beq\label{bdy}
u_{ij}+u_n\rho_{ij}=v_{ij}+v_n\rho_{ij},\ \  1\leq i,j\leq n-1.
\eeq
As $Du=Dv$ on $\pom$, we have $u_{ij}=v_{ij}$ and
$u_{in}=v_{in}$ at $p$ for $i, j<n$.
It is clear that 
$$X_u\cdot\gamma=-\sum \frac{\p H_k}{\p u_{n i}}\cdot (\delta_{i p}+\frac{u_i u_p}{1+w}) 
\cdot (\delta_{pj}+\frac{u_j u_p}{1+w})\cdot  u_j$$
 is independent of $u_{nn}$ since $\frac{\p H_k}{\p u_{n i}}$ are all independent of $u_{nn}$.
 Hence $X_u\cdot \gamma= X_v\cdot\gamma$ at $p$. Since $p$ is an arbitrary point on $\p \Omega$,
the lemma follows.
\end{proof}

\begin{lem}
We have
\beqn
\frac{\partial H_k}{\partial u_{nn}}
 &=&\frac{1}{k}\frac{\bar w^{k+1}}{w^{k+2}}\sigma_{k-1}(\bar A)\\
 &=& \frac{1}{k!}\frac{\bar w^{k+1}}{w^{k+2}}\sum \left(\begin{array}{ccc}
     i_1 & \cdot\cdot\cdot & i_{k-1} \\
     j_1 & \cdot\cdot\cdot & j_{k-1}\end{array}\right)
     \bar a_{i_1j_1}\cdot\cdot\cdot \bar a_{i_{k-1}j_{k-1}},\nonumber
\eeqn
where $\bar A=(\bar a_{ij})$,
\beqs
\bar a_{ij}
  &=&\frac{u_{ij}}{\bar w}-\frac{\sum_{l=1}^{n-1}u_{il}u_lu_j}{\bar w^3}, \ 1\leq i, j\leq n-1,\\
\bar w &=& \sqrt{1+{\sum}_{i=1}^{n-1}u_i^2}.
\eeqs
\end{lem}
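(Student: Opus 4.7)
The plan is to compute $\partial H_k/\partial u_{nn}$ by the chain rule applied to $H_k=\sigma_k(A)$, and then to simplify the resulting expression by a rotational reduction. From the definition (2.1), the only entries of $A$ that depend on $u_{nn}$ lie in row $n$, and a direct differentiation yields
\[
\frac{\partial a_{nj}}{\partial u_{nn}} \;=\; \frac{w^2\delta_{jn}-u_n u_j}{w^3}.
\]
Using $w^2-u_n^2=\bar w^2$, this gives $\partial a_{nn}/\partial u_{nn}=\bar w^2/w^3$ and $\partial a_{nj}/\partial u_{nn}=-u_n u_j/w^3$ for $j<n$. Combining with formula (2.5) for $H^{ij}=\partial H_k/\partial a_{ij}$ produces
\[
\frac{\partial H_k}{\partial u_{nn}} \;=\; \frac{\bar w^2}{w^3}\,H^{nn} \;-\; \frac{u_n}{w^3}\sum_{j<n}H^{nj}\,u_j.
\]

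Next I would unpack each piece. For $H^{nn}$, the generalized Kronecker bracket in (2.5) forces every free index into $\{1,\dots,n-1\}$, so that $H^{nn}$ equals $\sigma_{k-1}$ of the $(n-1)\times(n-1)$ principal block of $A$. For the cross term $\sum_{j<n}H^{nj}u_j$, the strategy is to exploit the linear dependence of $a_{ij}$ on $u_n$ through the correction term $-u_{il}u_l u_j/w^3$ together with the antisymmetry of the Kronecker symbol in (2.5), rewriting the sum so that it merges with the $\bar w^2 H^{nn}$ piece.

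I expect the main obstacle to be the algebraic bookkeeping required to recombine $\bar w^2 H^{nn}$ and $u_n\sum_{j<n}H^{nj}u_j$ into a single $\sigma_{k-1}(\bar A)$, because the reduced matrix $\bar A$ in the statement is built from $\bar w$ rather than $w$ and the $l$-index in its correction term is restricted to $\{1,\dots,n-1\}$. The cleanest way to carry out this reduction is via rotational invariance: an orthogonal change of the first $n-1$ coordinates leaves $u_n$, $u_{nn}$, $w$ and $\bar w$ fixed while transforming $\bar A$ by orthogonal conjugation, so both sides of the identity are invariant under such rotations, and it suffices to verify the formula in the frame where $u_1=\cdots=u_{n-1}=0$. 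In that frame the cross sum vanishes, $\bar w=1$, $\bar a_{ij}=u_{ij}$ for $i,j<n$, and $a_{ij}=u_{ij}/w$ for $i,j<n$; a direct computation of $H^{nn}/w^3$ then matches the claimed expression after reading off the appropriate power of $w$ and the convention for $\sigma_{k-1}$. As an alternative to the direct differentiation of (2.1), one may differentiate via (2.2) using $\partial u_{pq}/\partial u_{nn}=\delta_{pn}\delta_{qn}$, which gives $\partial a_{ij}/\partial u_{nn}=w^{-1}b^{in}b^{nj}$ at once; the same rotational reduction then finishes the proof.
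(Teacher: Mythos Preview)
Your chain-rule setup is correct, and so is the observation that both sides of the identity are invariant under rotations of the first $n-1$ coordinates. The gap is in the reduction step: an orthogonal transformation of $(x_1,\dots,x_{n-1})$ acts on the tangential gradient $(u_1,\dots,u_{n-1})$ by the same rotation, hence preserves its length $\bar w^2-1=\sum_{i<n}u_i^2$. You therefore cannot rotate into the frame $u_1=\cdots=u_{n-1}=0$ unless $\bar w=1$ already; at best you can arrange $u_2=\cdots=u_{n-1}=0$. In that partially reduced frame the cross sum does \emph{not} vanish (it becomes $H^{n1}u_1$ with $u_1=\sqrt{\bar w^2-1}$), $\bar w\ne1$, and the algebra you describe does not collapse. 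So the proof as written does not close.

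The paper's argument targets a different set of variables. Rather than trying to kill $u_1,\dots,u_{n-1}$ geometrically, it shows analytically---using the antisymmetry $\partial^2 H_k/\partial a_{ip}\partial a_{jq}=-\partial^2 H_k/\partial a_{iq}\partial a_{jp}$---that $\partial H_k/\partial u_{nn}$ is independent of each $u_{ni}$ ($i<n$), and that the combination $w^{k+2}\,\partial H_k/\partial u_{nn}$ is independent of $u_n$. One may then set $u_n=0$ and $u_{ni}=0$; at that point $w=\bar w$, the cross sum disappears, and the principal $(n-1)\times(n-1)$ block of $A$ coincides with $\bar A$, which yields the formula immediately. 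Your rotational idea could still be used as a preliminary simplification (reducing to $u_2=\cdots=u_{n-1}=0$), but the essential step---eliminating $u_n$---requires the independence computation, not a rotation.
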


\begin{proof}
By definition,
\begin{eqnarray*}
\frac{\partial H_k}{\partial u_{nn}}=
\frac{\partial H_k}{\partial a_{nn}}\cdot\left(\frac{1}{w}-\frac{u_n^2}{w^3}\right)+\sum_{j=1}^{n-1}\frac{\partial H_k}{\partial a_{nj}}\cdot\left(-\frac{u_nu_{j}}{w^3}\right).
\end{eqnarray*}
Since $\frac{\partial H_k}{\partial a_{nn}}$ depends only  on $\{a_{il}\}_{i, l\neq n}$
and $\frac{\partial H_k}{\partial a_{nj}}$ depends only on $\{a_{il}\}_{i\neq n, l\neq j}$,
it is easy to see that $\frac{\partial H_k}{\partial u_{nn}}$ is independent of $u_{nn}$.
We will further show that $\frac{\partial H_k}{\partial u_{nn}}$
is independent of $u_{ni}$ for any $1\leq i\leq n-1$ and
$w^{k+2}\frac{\partial H_k}{\partial u_{nn}}$ is independent of $u_n$.

First, for any $1\leq i\leq n-1$,
\begin{eqnarray*}
\frac{\partial^2 H_k}{\partial u_{nn}\partial u_{ni}}
&=&\sum_{j=1}^{n-1} \frac{\partial^2 H_k}{\partial a_{nn}\partial a_{ij}}\cdot\left(-\frac{u_nu_{j}}{w^3}\right) \cdot\left(\frac{1}{w}-\frac{u_n^2}{w^3}\right)\\
&&+\sum_{j,l=1}^{n-1}\frac{\partial^2 H_k}{\partial a_{nj}\partial a_{il}}\cdot\left(-\frac{u_nu_{j}}{w^3}\right) \cdot\left(-\frac{u_nu_{l}}{w^3}\right)\\
&&+\sum_{j=1}^{n-1}\frac{\partial^2 H_k}{\partial a_{nj}\partial a_{in}}\cdot\left(-\frac{u_nu_{j}}{w^3}\right)\cdot\left(\frac{1}{w}-\frac{u_n^2}{w^3}\right)\\
&=& 0.
\end{eqnarray*}
Note that in the last equality we use the fact
$$\frac{\partial^2 H_k}{\partial a_{ip}\partial a_{jq}}=-\frac{\partial^2 H_k}{\partial a_{iq}\partial a_{jp}}, \
1\leq i, j, p, q\leq n.$$

Next, we consider the dependence on $u_n$.
For convenience, we denote
$$\tilde a_{ij}=wa_{ij}=u_{ij}-\frac{u_{il}u_lu_j}{w^2}, \
\tilde H_k=\sigma_k(\{\tilde a_{ij}\})=w^kH_k.$$
Then it follows
$$\frac{\partial \tilde a_{ij}}{\partial u_n}=\begin{cases}
    0,  & \text{$i=n$}, \\[3pt]
    \frac{2u_n^2-w^2}{w^4}\sum_{l=1}^{n-1}u_{il}u_l, & \text{$i<n, j=n$,}\\[3pt]
    -\frac{2}{w^4}\sum_{l=1}^{n-1}u_{il}u_lu_ju_n,  & \text{$i, j<n$},
\end{cases}$$
and
$$w^{k+2}\frac{\partial H_k}{\partial u_{nn}}=
\frac{\partial \tilde H_k}{\partial \tilde a_{nn}}\bar w^2+\sum_{j=1}^{n-1}\frac{\partial \tilde H_k}{\partial \tilde a_{nj}}\cdot (-u_nu_{j}).$$
Since $\frac{\partial H_k}{\partial u_{nn}}$ is independent of $u_{ni}$,
we may assume $u_{ni}=0$.
Hence
\begin{eqnarray*}
\frac{\partial }{\partial u_n}\left(w^{k+2}\frac{\partial H_k}{\partial u_{nn}}\right)&=&
\sum_{i, j, l=1}^{n-1}\frac{\partial^2 \tilde H_k}{\partial \tilde a_{nn}\partial \tilde a_{ij}}\bar w^2\left( -\frac{2}{w^4}\sum_{l=1}^{n-1}u_{il}u_lu_ju_n \right)+\sum_{j=1}^{n-1}\frac{\partial \tilde H_k}{\partial \tilde a_{nj}}\cdot (-u_{j})\\&&+\sum_{i, j=1}^{n-1}\frac{\partial^2 \tilde H_k}{\partial \tilde a_{nj}\partial \tilde a_{in}}\cdot \left(\frac{2u_n^2-w^2}{w^4}\sum_{l=1}^{n-1}u_{il}u_l\right)\cdot (-u_nu_{j})\\
&& +\sum_{i, j=1}^{n-1}\sum_{s=1}^{n-1}\frac{\partial^2 \tilde H_k}{\partial \tilde a_{nj}\partial\tilde a_{is}}\cdot\left( -\frac{2}{w^4}\sum_{l=1}^{n-1}u_{il}u_lu_su_n \right)\cdot (-u_nu_{j})
\end{eqnarray*}
Again by the fact
$$\frac{\partial^2 \tilde H_k}{\partial \tilde a_{ip}\partial \tilde a_{jq}}=-\frac{\partial^2 \tilde H_k}{\partial \tilde a_{iq}\partial \tilde a_{jp}}, \  1\leq i, j, p, q\leq n,$$
we have
\begin{eqnarray*}
\frac{\partial }{\partial u_n}\left(w^{k+2}\frac{\partial H_k}{\partial u_{nn}}\right)&=&
\sum_{j=1}^{n-1}\frac{\partial \tilde H_k}{\partial \tilde a_{nj}}\cdot (-u_{j})+\sum_{i, j=1}^{n-1}\frac{\partial^2 \tilde H_k}{\partial \tilde a_{nj}\partial \tilde a_{in}}\cdot \left(\frac{1}{w^2}
\sum_{l=1}^{n-1}u_{il}u_l\right)\cdot (-u_nu_{j})\\
&=&\sum_{j=1}^{n-1}\frac{\partial \tilde H_k}{\partial \tilde a_{nj}}\cdot (-u_{j})+\sum_{i, j=1}^{n-1}\frac{\partial^2 \tilde H_k}{\partial \tilde a_{nj}\partial \tilde a_{in}}\cdot (-\tilde a_{in})\cdot (-u_{j})\\
&=& 0.
\end{eqnarray*}
This implies that we may further assume $u_n=0$ and the lemma follows by
substituting $a_{ij}=\frac{\bar w}{w}\bar a_{ij}$ into $\frac{\partial H_k}{\partial u_{nn}}=
\frac{\partial H_k}{\partial a_{nn}}w^{-1}$.
\end{proof}

The following monotonicity integral inequality is the main result of this section.
It is critical for the proof of weak continuity in Theorem 1.1.
Note that in the following we do not assume the boundary $\pom$ is $(k-1)$-convex.
Due to the lack of convexity of the set of $H_k$-subharmonic functions,
the computation is quite complicated.

 \begin{lem}
Suppose $u, v\in C^\infty(\overline\Omega)$ are $H_k$-subharmonic functions.
If $u=v$ and $u_\gamma>v_\gamma$ on $\pom$,
then
\beq
\int_{\Omega}H_k[u]\geq \int_{\Omega}H_k[v],
\eeq
where $\gamma$ is the unit outer normal of $\pom$.
\end{lem}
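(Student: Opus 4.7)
The plan is to reduce the claim to a boundary integral via the divergence formula (2.4) and then analyze the boundary integrand pointwise using Lemmas 2.1 and 2.2. By the divergence theorem and the identity preceding Lemma 2.1,
$$\int_\Omega\bigl(H_k[u]-H_k[v]\bigr)\, dx \;=\; \int_{\partial\Omega}(X_u-X_v)\cdot\gamma\, dS,$$
so it suffices to show the boundary integral is nonnegative. At each boundary point $p$ I would, as in the proof of Lemma 2.1, rotate coordinates so that $p=0$, the outer normal is $-e_n$, the boundary is locally $x_n=\rho(x')$ with $\rho(0)=0$ and $D\rho(0)=0$, and then rotate within the tangent plane so that the common tangential gradient of $u$ and $v$ at $p$ vanishes (permissible since $u=v$ on $\partial\Omega$ forces their tangential gradients to agree). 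The formula for $X_u\cdot\gamma$ established in the proof of Lemma 2.1, combined with the explicit expression for $\partial H_k/\partial u_{nn}$ provided by Lemma 2.2, then reduces $X_u\cdot\gamma$ at $p$ to a closed-form function of $u_n$ and the tangential Hessian $(u_{ij})_{i,j<n}$, and similarly for $v$.

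Since $u=v$ on $\partial\Omega$, differentiating twice tangentially yields the Codazzi-type relation $u_{ij}-v_{ij}=(v_n-u_n)\rho_{ij}$ for $i,j<n$, exactly as in (2.8), and the hypothesis $u_\gamma>v_\gamma$ translates in these coordinates to $u_n<v_n$. To compare the two boundary integrands I would consider the linear homotopy $u_t=(1-t)u+tv$ and the scalar function $F(t):=\int_\Omega H_k[u_t]\, dx$. Although $u_t$ need not be $H_k$-subharmonic, $F$ is smooth and, using the divergence form once more,
$$F'(t)=\int_{\partial\Omega}\frac{d}{dt}\bigl(X_{u_t}\cdot\gamma\bigr)\, dS.$$
In the adapted coordinates, both $(u_t)_n$ and the tangential Hessian of $u_t$ interpolate linearly between those of $u$ and $v$ (with the Codazzi relation tying the tangential variation to the change in normal derivative), so $\tfrac{d}{dt}(X_{u_t}\cdot\gamma)$ can be computed explicitly. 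The goal is then to show, using Newton-type identities for the linearization of $\sigma_{k-1}$ together with tangential integration by parts on $\partial\Omega$, that $F'(t)\leq 0$ on $[0,1]$; combined with $u_n<v_n$ this yields $F(1)\leq F(0)$, which is the desired inequality.

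The principal obstacle is exactly the non-convexity of $\mathcal{SH}_k(\Omega)$ emphasized in the introduction: the homotopy $u_t$ is generally not $H_k$-subharmonic, so the cone condition $\lambda(A[u_t])\in\bar\Gamma_k$ is unavailable along the path, and only the endpoints $u,v$ can be used to control signs. Compounding this, the boundary $\partial\Omega$ is not assumed $(k-1)$-convex, so the second fundamental form $(\rho_{ij})$ has no definite sign and the pointwise boundary integrand is not guaranteed to be nonnegative. Recovering the correct sign forces one to extract a tangential divergence on $\partial\Omega$ via the Newton-tensor identities and to invoke $H_k$-subharmonicity of the endpoints to control the surviving bulk term; this is the source of the computational complexity remarked upon just before the statement of the lemma.
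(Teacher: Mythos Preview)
Your proposal has a genuine gap: you correctly identify the obstacle (the homotopy $u_t=(1-t)u+tv$ is not $H_k$-subharmonic) but you do not overcome it. The final paragraph is a description of the difficulty, not a resolution. Concretely, $F'(t)$ is a boundary integral whose integrand involves the linearized operator $H^{ij}[u_t]$ evaluated at the intermediate function $u_t$; the positivity of this matrix is exactly what $H_k$-subharmonicity would give, and you have no substitute for it at intermediate $t$. Invoking ``Newton-tensor identities'' and ``tangential integration by parts'' to somehow transfer endpoint subharmonicity to intermediate $t$ is not a mechanism---you would have to exhibit an identity that expresses $F'(t)$ purely in terms of data of $u$ and $v$, and nothing in the setup suggests one exists. (There is also a slip: you cannot rotate so that a nonzero tangential gradient \emph{vanishes}; at best you can align it with one axis, and then $\bar w$ in Lemma 2.2 still carries it.) The claim that $F'(t)\le 0$ for \emph{every} $t\in[0,1]$ is in any case stronger than what is needed and there is no reason to expect it without subharmonicity along the path.

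The paper's proof avoids the homotopy altogether. It exploits the divergence structure to note that $\int_\Omega H_k[u]$ depends only on the boundary jet, then builds an explicit function $\tilde u=u+\eta$ on a thin collar $\Omega\setminus\Omega_\delta$ with $\tilde u=u$, $D\tilde u=Du$ on $\partial\Omega$ and $\tilde u=v-c_\delta$, $D\tilde u=Dv$ on $\partial\Omega_\delta$ (so Lemma 2.1 applies on both boundaries). The whole argument reduces to showing $\int_{\Omega\setminus\Omega_\delta}H_k[\tilde u]\to 0$. The construction forces $\tilde u_{nn}\sim\delta^{-1}$ while, crucially, the \emph{tangential} Hessian of $\tilde u$ is (up to $O(\delta)$) a convex combination of those of $u$ and $v$. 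Lemma 2.2 says $\partial H_k/\partial u_{nn}$ depends only on this tangential data; since $\bar\Gamma_{k-1}$ is convex, endpoint $H_k$-subharmonicity yields $\partial H_k/\partial u_{nn}\ge -C_\delta$ for $\tilde u$ with $C_\delta\to 0$, so $H_k[\tilde u]\ge -C_\delta\delta^{-1}-C_1$ and the collar integral vanishes in the limit. This is the concrete mechanism---absent from your outline---by which endpoint subharmonicity controls the interpolant.
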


\begin{proof}
By the divergence structure of the $k$-curvature operator $H_k$ \cite {I1, I2},
the integral $\int_\Om H_k[u]$ depends only on the value of $u$ near $\pom$.
Hence by the condition $u=v$ and $u_\gamma>v_\gamma$ on $\pom$,
we may also assume that  $u<v$ in $\Omega$.
This condition also implies that $\pom$ is smooth. Since the proof  is complicated,
we divide it into three steps:

(i) For any $p\in \partial\Omega$ and $\delta>0$,
denote by $$L_\delta(p)=\{p+t\gamma \ |\  -\delta\leq t\leq 0\},$$
where $\gamma$ is the unit outer normal.
Assume $\delta$ is small enough such that the set $\{p_\delta=p-\delta \gamma(p)\ |\ p\in\pom\}$
encloses a subdomain $\Omega_\delta=\{x\in\Omega: d(x,\partial\Omega)>\delta\}\subset\Omega$, i.e.,
$$\Omega\setminus \Omega_\delta=\bigcup_{p\in\partial\Omega}L_\delta(p).$$
We hope to construct a suitable function
$\tilde u=u+\eta \in{C^2(\overline{\Omega\setminus \Omega_{\delta}})}$,
such that for some small constant $c_\delta>0$,
\begin{eqnarray*}
&& \tilde u=v-c_\delta,  \nabla \tilde u=\nabla v     \text{\ \ on $\partial\Omega_\delta$}, \\
&&  \tilde u=u,  \nabla \tilde u=\nabla u      \text{\ \ on $\partial\Omega$}.
\end{eqnarray*}
Indeed we will get a  sequence of functions that approximately satisfy the above condition, see (2.12), (2.13).

By the smoothness of $u, v$, for any given $\eps>0$, we have
$$|u_\gamma(x)-u_\gamma(p)|, \ |v_\gamma(x)-v_\gamma(p)|\leq \eps
\ \forall\  x\in{L_\delta(p)} $$
provided $\delta$ is  small enough. Hence
$$|v(p_\delta)-u(p_\delta)-(u_\gamma(p_\delta)-v_\gamma(p_\delta))\delta|
\leq C\eps\delta$$
on $\p \Omega_{\delta}$. Let
$\lambda=\inf_{\p \Omega}(u_\gamma-v_\gamma)>0$.
For any $p_\delta\in \p \Omega_{\delta}$, denote
\beq
a=v(p_\delta)-u(p_\delta)-\frac{\lambda\delta}{4}>0,
\ \ b=u_\gamma(p_\delta)-v_\gamma(p_\delta)>0.
\eeq
So we have $v-\frac{\lambda\delta}{4}>u$ on $\partial\Omega_\delta$
and $v-\frac{\lambda\delta}{4}<u$ on $\partial\Omega$.
For each $p\in\partial\Omega$, we aim to construct a function to connect
$(p_\delta, v(p_\delta)-\frac{\lambda\delta}{4})$ and $(p, u(p))$, and  whose
first derivative coincides with $v_\gamma(p_\delta)$, $u_\gamma(p)$, respectively, and the
second derivative is  $O(\delta^{-1})$ as $\delta\to 0$ for our purpose. A function behaving like $c_1\frac{(-t)^{1+\alpha}}{\delta^\alpha}+c_2\frac{t^2}{\delta}$($0<\alpha<<1$) with $c_1$, $c_2$ to be determined
would have an $O(1)$ jump for the first derivative and an $O(\delta^{-1})$ jump for the second derivative.

Choose $0<\alpha<< 1$ and $\kappa<<\delta$ such that
$$(b-\frac{1}{4}\lambda)\delta<b\frac{\kappa+\delta}{1+\alpha}.$$
Define a function
\beq
\eta=\eta_{\delta,\kappa}=
s\frac{b}{(1+\alpha)(\kappa+\delta)^\alpha}(-t+\kappa)^{1+\alpha}+(1-s)\frac{b}{2\delta}t^2, \ -\delta\leq t\leq 0
\eeq
on each line segment $L_\delta(p)$, where
$$s=\frac{a-\frac{b\delta}{2}}{\frac{\kappa+\delta}{1+\alpha}b-\frac{b\delta}{2}}>0.$$
Here the small perturbation by constant $\kappa$ ensures $\eta$ is smooth up to $\partial \Omega_\delta$.
Note that $\Omega\setminus \Omega_\delta=\cup_{p\in\partial\Omega}L_\delta(p)$.
We obtain a function
$\tilde u=u+\eta\in{C^2(\overline{\Omega\setminus \Omega_{\delta}})}$.
Combining with (2.10), one can check that
\begin{eqnarray}
&&\tilde u=v-\frac{\lambda\delta}{4}, D\tilde u=Dv \text {\ \ on $\partial \Omega_{\delta}$};\\
&&\tilde u=u+\frac{a-\frac{b\delta}{2}}{\kappa+\delta-\frac{(1+\alpha)\delta}{2}}\frac{\kappa^{1+\alpha}}{(\kappa+\delta)^\alpha}, \tilde u_\gamma=u_\gamma-\frac{a-\frac{b\delta}{2}}{\kappa+\delta-\frac{(1+\alpha)\delta}{2}}\frac{(1+\alpha)\kappa^\alpha}{(\kappa+\delta)^\alpha} \text{\ \ on $\partial \Omega$}.
\end{eqnarray}
By the divergence theorem,
\beq
\int_{\partial \Omega}X_{\tilde u}\cdot\gamma=\int_{\partial \Omega_\delta}X_{\tilde u}\cdot\gamma+
\int_{\Omega\setminus \Omega_{\delta}}H_k[\tilde u].
\eeq
By Lemma 2.1, the right hand side is
$$\int_{\Omega_{\delta}}H_k[v]+\int_{\Omega\setminus \Omega_{\delta}}H_k[\tilde u]$$
while the left hand side converges to
$$\int_{\Omega}H_k[u]$$
when $\kappa\to 0$. It suffices to estimate the second term on the right.

\vskip 20pt

(ii) In this step we estimate the derivatives of $\eta$ in $\Omega\setminus \Omega_\delta$.

For any $x_0\in\Omega\setminus\Omega_\delta$, there exists $p$ on $\partial\Omega$
such that $x_0\in L_\delta(p)$, i.e., $x_0=p+t\gamma(p)$.
By a translation and a rotation of the coordinates,
we may assume that $p=0$, $x_0=(0,..., 0, -\delta)$ and $\gamma(p)=(0,\cdots, 0, 1)$. Denote
$\beta'=D_\gamma u(p)$, $\beta=D_\gamma v(p)$.
Then near $0$ we have
\begin{eqnarray*}
u &=&\sum_{i=1}^{n-1}\alpha_ix_i+\beta' x_n
                +\sum_{i, j=1}^n\frac{A_{ij}}{2}x_ix_j+O(|x|^3),\\
v&=&\sum_{i=1}^{n-1}\alpha_ix_i+\beta x_n
                +\sum_{i, j=1}^n\frac{B_{ij}}{2}x_ix_j+O(|x|^3).
\end{eqnarray*}
By the assumption of the lemma, $\beta'>\beta$ and near $0$, by \eqref{bdy}, $\p \Omega$ is given by
\beq
x_{n}=\rho(x')=\sum_{i, j=1}^{n-1}\frac{B_{ij}-A_{ij}}{2(\beta'-\beta)}x_ix_j+O(|x'|^3),
\eeq
where $x'=(x_1, \cdots, x_{n-1})$. Then,
\beq
\rho_i=\frac{B_{ij}-A_{ij}}{\beta'-\beta}x_j+O(|x'|^2).
\eeq
By (2.11), to estimate the derivatives of $\eta$,
we need to analyse $t$ with respect to the nearby points of $x_0$.
For any $x=(x',x_n)$ near $x_0$, let $p_x=(y', \rho(y'))\in\partial\Omega$ be the point such that $x\in{L_\delta(p_x)}$. From
$$-(\frac{D\rho}{\sqrt{1+|D\rho|^2}}, \frac{-1}{\sqrt{1+|D\rho|^2}})\cdot t+(y',\rho(y'))=(x',x_n),$$
and (2.16), we have
\begin{eqnarray}
&&t=-(\rho(y')-x_n)\sqrt{1+|D\rho|^2},\\[6pt]
&&y_i-\left[\frac{B_{ij}-A_{ij}}{\beta'-\beta}y_j+O(|y'|^2)\right]\cdot
\left[\frac{B_{ij}-A_{ij}}{2(\beta'-\beta)}y_iy_j-x_n+O(|y'|^3)\right]\nonumber\\
&& \ \ \ \ \ \ \ \ \ \ =x_i, 1\leq i\leq n-1.
\end{eqnarray}
By (2.18),
$$x_i=y_i+x_n\frac{B_{ij}-A_{ij}}{\beta'-\beta}x_j+O(|y'|^2).$$
It follows
\beq
y_i=x_i-x_n\frac{B_{ij}-A_{ij}}{\beta'-\beta}x_j+O(|x'|^2).
\eeq
Substituting it into $t$, we have
\begin{eqnarray*}
t&=&-\left[\frac{B_{ij}-A_{ij}}{2(\beta'-\beta)}\left(x_i-x_n\frac{B_{ik}-A_{ik}}{\beta'-\beta}x_k\right)\left(x_j-x_n\frac{B_{jl}-A_{jl}}{\beta'-\beta}x_l\right)+O(|x'|^3)-x_n\right]\\
&&\cdot \left[1+\left(\frac{B_{ij}-A_{ij}}{\beta'-\beta}x_j+O(|x'|^2)\right)^2\right]^{\frac{1}{2}}\\
&=&-\left[\frac{B_{ij}-A_{ij}}{2(\beta'-\beta)}x_ix_j -\left(\frac{B_{ik}-A_{ik}}{\beta'-\beta}\frac{B_{kj}-A_{kj}}{\beta'-\beta}x_n+O(|x_n^2|)\right)x_ix_j+O(|x'|^3)-x_n\right]\\
&&\cdot \left[1+\frac{1}{2}\frac{B_{ik}-A_{ik}}{\beta'-\beta}\frac{B_{il}-A_{il}}{\beta'\beta}x_kx_l +O(|x'|^3)\right]\\
&=&x_n+\left[-\frac{B_{ij}-A_{ij}}{2(\beta'-\beta)}+\frac{3}{2}\frac{B_{ik}-A_{ik}}{\beta'-\beta}\frac{B_{kj}-A_{kj}}{\beta'-\beta}x_n+O(|x_n^2|)\right]x_ix_j+O(|x'|^3).
\end{eqnarray*}
At $x_0$, by (2.11),
\begin{eqnarray}
&&|\eta_i|\leq C\delta, \ |\eta_{ni}|\leq C, \ 1\leq i\leq n-1,\\[6pt]
&&|\eta_n|\leq C,\  \eta_{nn}=s\frac{b(-t+\kappa)^{\alpha-1}}{(\delta+\kappa)^\alpha}+(1-s)\frac{b}{\delta}\geq
C\delta^{-1}.
\end{eqnarray}
where $C$ is a constant depending on $u$, $v$ but independent of $\delta$.
Since $\frac{\partial t}{\partial x_i}=0$ at $x_0$ for $1\leq i\leq n-1$, by the expansion of $t$
\beq
\eta_{ij}=b\left[s\frac{B_{ij}-A_{ij}}{\beta'-\beta} \cdot(\frac{-t+\kappa}{\delta+\kappa})^\alpha
                   -\frac{t}{\delta}\frac{B_{ij}-A_{ij}}{\beta'-\beta}(1-s)\right]+O(\delta).
\eeq
Note that $b\to \beta'-\beta$ as $\delta\to 0$. By (2.22),
\begin{eqnarray}
\tilde u_{ij}
    &=&  u_{ij}+\eta_{ij}\nonumber\\
    &=&  [1-(\frac{-t+\kappa}{\delta+\kappa})^\alpha s-\frac{t}{\delta}(1-s)]A_{ij}
       +[(\frac{-t+\kappa}{\delta+\kappa})^\alpha s+\frac{t}{\delta}(1-s)]B_{ij}+O(\delta)
\end{eqnarray}
for $1\leq i, j\leq n-1$ as $\delta\to 0$.

\vskip 20pt

(iii) In the final step, we estimate the integral
$$\int_{\Omega\setminus \Omega_{\delta}}H_k[\tilde u].$$

For any $x_0\in\Omega\setminus\Omega_\delta$ and
$x_0\in L_\delta(p)$ for some $p$ on $\partial\Omega$. As in step (ii),
we may assume that $p=0$ and $\gamma(p)=(0,\cdots, 0, 1)$.
We estimate $H_k[\tilde u]$ at $x_0$.
Write
\beq
H_k[u]=\frac{\partial H_k}{\partial u_{nn}}u_{nn}+Q(\nabla u, u_{11},..., u_{ij}, ..., \widehat{u_{nn}}).
\eeq
The symbol `$\widehat{u_{nn}}$' means $Q$ is independent of $u_{nn}$.
Recall that $\frac{\partial H_k}{\partial u_{nn}}$ was given in Lemma 2.2.
By (2.20), (2.21) and (2.23), the quantity $Q\geq -C_1$ for $\tilde u$, where $C_1>0$ is a constant depending
on $u$, $v$ but independent of $\delta$.

Next we estimate $\frac{\partial H_k}{\partial u_{nn}}$ for $\tilde u$.
For simplicity, we denote by
$$D_{ij}=[1-(\frac{-t+\kappa}{\delta+\kappa})^\alpha s-\frac{t}{\delta}(1-s)]A_{ij}
       +[(\frac{-t+\kappa}{\delta+\kappa})^\alpha s+\frac{t}{\delta}(1-s)]B_{ij}.$$
and $\xi_i=u_i(p)=v_i(p)$, $1\leq i\leq n-1$.
By the formula (2.8) in Lemma 2.2 and (2.23), there exists $C_\delta>0$ such that
\beq
\frac{\partial H_k}{\partial u_{nn}}= \frac{1}{k!}\frac{\bar w^{k+1}}{w^{k+2}}\sum \left(\begin{array}{ccc}
     i_1 & \cdot\cdot\cdot & i_{k-1} \\
     j_1 & \cdot\cdot\cdot & j_{k-1}\end{array}\right)
     \bar a_{i_1j_1}\cdot\cdot\cdot \bar a_{i_{k-1}j_{k-1}}-C_\delta,
\eeq
for $\tilde u$ and $C_\delta\to 0$ as $\delta\to 0$, where
\beqs
\bar a_{ij}
  &=&\frac{D_{ij}}{\bar w}-\frac{\sum_{l=1}^{n-1}D_{il}\xi_l\xi_j}{\bar w^3}, \ 1\leq i, j\leq n-1,\\
\bar w &=& \sqrt{1+{\sum}_{i=1}^{n-1}\xi_i^2},\\
w &=& \sqrt{1+{\sum}_{i=1}^{n}\xi_i^2}.
\eeqs
By the $H_k$-subharmonicity of $u$, $v$, $\frac{\partial H_k}{\partial u_{nn}}\geq 0$ for $u$ and $v$. This implies the first term in (2.25) is nonnegative, i.e.,
$$\frac{\partial H_k}{\partial u_{nn}}\geq -C_\delta\to 0$$
for $\tilde u$.

Hence, by (2.21),
$$H_k[\tilde u] \geq -C_\delta \delta^{-1}-C_1.$$
at $x_0$,  which implies
$$\int_{\Omega\setminus \Omega_{\delta}}H_k[\tilde u]
\geq -C_\delta\frac{|\Omega\setminus\Omega_\delta|}{\delta}-C_1|\Omega\setminus\Omega_\delta|\to 0$$
as $\delta\to 0$. Therefore sending $\delta\to 0$ in (2.14), we have
$$\int_{\Omega}H_k[u]\geq \int_{\Omega}H_k[v].$$
\end{proof}


The following lemma is needed later to mollify a piecewise
smooth $H_k$-subharmonic function.
The proof is similar to that of Lemma 2.3.
Note that for a smooth function defined on one side of a smooth
hypersurface $\Gamma$,  if it is smooth up to $\Gamma$,
then one can extend it to the other side of $\Gamma$ by Taylor's expansion
in the normal bundle.

\begin{lem}
Let $v, v'$ be two smooth $H_k$-subharmonic functions in $\Om$.
Let $u=\max \{v, v'\}$ and denote $\Gamma=\{x\in\Om\ |\ v(x)=v'(x)\}$.
Assume $\Gamma$ is a smooth hypersurface, $Dv\ne Dv' $ on  $\Gamma$,
and $H_k[v], H_k[v']>0$ near $\Gamma$.
Then there is a sequence of smooth $H_k$-subharmonic functions $u_j$
which converges to $u$ and $u_j=u$ outside a small neighbourhood of $\Gamma$.
\end{lem}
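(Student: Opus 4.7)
The plan is to construct $u_j$ by a thin-tube perturbation of $v$ around $\Gamma$, following the same mechanism that drives Lemma 2.3: writing $u_j = v + \eta_j$ in a tube of width $\delta_j$, with $\eta_j$ designed to smoothly connect $v$ on one side of the tube to $v'$ on the other side, arranged so that $\partial_{tt}\eta_j$ is strictly positive and of order $\delta_j^{-1}$ while the tangential and mixed second derivatives stay bounded. The large positive normal second derivative, combined with the positivity of $\partial H_k/\partial u_{nn}$ supplied by Lemma 2.2, will then force $H_k[u_j]>0$ inside the tube.

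First I would fix normal coordinates $(x',t)$ in a tubular neighborhood $N$ of $\Gamma$, with $\Gamma=\{t=0\}$, $v\geq v'$ on $\{t\leq 0\}$ and $v'\geq v$ on $\{t\geq 0\}$. Using the Taylor-extension remark preceding the statement, I extend both $v$ and $v'$ to smooth functions on all of $N$. Since $v=v'$ on $\Gamma$ and $Dv\ne Dv'$ there, the tangential derivatives of $v'-v$ vanish on $\Gamma$, and $\mu(x'):=(v'-v)_t(x',0)$ is smooth and strictly positive with $v'-v=\mu(x')t+O(t^2)$ in $N$. For $\delta_j\to 0$, I take $u_j=\max(v,v')$ outside $\{|t|<\delta_j\}$ and, inside the tube, set
$$u_j(x',t)=v(x',t)+\mu(x')\Bigl(\tfrac{\delta_j}{4}+\tfrac{t}{2}+\tfrac{t^2}{4\delta_j}\Bigr)+r_j(x',t),$$
where the quadratic profile matches $v$ at $t=-\delta_j$ and $v'$ at $t=+\delta_j$ in value and first derivatives to leading order in $\delta_j$, and the corrector $r_j$ (modelled on the $\kappa$-regularized two-term ansatz of Lemma 2.3) cleans up the residual $O(\delta_j)$ discrepancies so as to produce smooth gluing at $t=\pm\delta_j$. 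Differentiating, the perturbation $\eta_j:=u_j-v$ satisfies $\partial_{tt}\eta_j=\mu/(2\delta_j)+O(1)$, tangential second derivatives $O(\delta_j)$, and mixed second derivatives $O(1)$.

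Next I would verify $H_k[u_j]>0$ inside the tube along the lines of step (iii) of Lemma 2.3. Since the tangential $(n{-}1)\times(n{-}1)$ block and the tangential gradient of $u_j$ are $O(\delta_j)$-close to those of $v$, and since $\lambda(v)$ lies in the open cone $\Gamma_k$ by the hypothesis $H_k[v]>0$ near $\Gamma$, the cofactor matrix $\bar A(u_j)$ appearing in Lemma 2.2 lies in the open cone $\Gamma_{k-1}$ for small $\delta_j$, yielding $\partial H_k/\partial u_{nn}|_{u_j}\geq c>0$. Writing $H_k[u_j]=(\partial H_k/\partial u_{nn})\,u_{j,nn}+Q$ with $Q$ bounded and independent of $u_{nn}$, we obtain
$$H_k[u_j]\geq\frac{c\mu}{2\delta_j}-C>0$$
for $j$ large. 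Full admissibility $\lambda(u_j)\in\Gamma_k$ follows because $a(u_j)$ differs from $a(v)\in\Gamma_k$ by a bounded term plus a large positive rank-one piece approximately in the $e_n$-direction, and $\Gamma_k$ is a convex cone with $e_n\otimes e_n\in\overline{\Gamma_k}$. The properties $u_j\to u$ and $u_j=u$ outside the tube are immediate from the construction.

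The main obstacle is to arrange the profile $\eta_j$ so that the gluing at $t=\pm\delta_j$ is smooth enough (at least $C^2$) to match $v$ and $v'$, while simultaneously preserving the lower bound $\partial_{tt}\eta_j\gtrsim\delta_j^{-1}$ throughout the tube. As in Lemma 2.3, this is handled by splitting the profile into two pieces tuned to the respective boundaries together with a $\kappa$-regularization sent to zero; the strict positivity $H_k[v],H_k[v']>0$ near $\Gamma$ is precisely what supplies the margin needed to absorb the $O(1)$ error terms generated by the perturbation.
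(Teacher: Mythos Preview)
Your overall strategy --- a tube perturbation $u_j=v+\eta_j$ whose normal second derivative blows up like $\delta_j^{-1}$, then the decomposition $H_k[u_j]=(\partial H_k/\partial u_{nn})\,u_{j,nn}+Q$ together with Lemma~2.2 --- is exactly the paper's mechanism, and your estimate $H_k[u_j]\ge c\mu/(2\delta_j)-C$ in the interior of the tube is correct. But two points need repair.

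\medskip
\textbf{(a) The tangential block is not $O(\delta_j)$-close to that of $v$.} In Cartesian coordinates at $x_0\in L(p,\delta_j)$, the signed distance $t$ has $t_{ij}=-\rho_{ij}+O(\delta_j)$ for $i,j<n$, where $\rho$ parameterises $\Gamma$; since $\eta_{j,t}=O(1)$, the chain rule gives $(\eta_j)_{ij}=\eta_{j,t}\,t_{ij}+O(\delta_j)=O(1)$, not $O(\delta_j)$. What saves the argument (and what the paper actually uses, see (2.31)) is that $\rho_{ij}=(B_{ij}-A_{ij})/(\beta-\beta')$ while $\eta_{j,t}$ ranges over $[0,\mu]$ with $\mu=\beta-\beta'$, so $(u_j)_{ij}$ is a \emph{convex combination} of $A_{ij}$ and $B_{ij}$ up to $O(\delta_j)$. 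The positivity of $\partial H_k/\partial u_{nn}$ then follows from the convexity of the admissible cone and the strict positivity $H_k[v],H_k[v']>0$, not from proximity to $v$ alone.

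\medskip
\textbf{(b) The smooth gluing is incompatible with $\partial_{tt}\eta_j\gtrsim\delta_j^{-1}$ throughout.} For $u_j$ to be $C^2$ at $t=\pm\delta_j$, the second normal derivative of $\eta_j$ must match the $O(1)$ second derivatives of $0$ and $v'-v$ there; in particular $\partial_{tt}\eta_j(\pm\delta_j)=O(1)$, so your uniform lower bound cannot hold at the tube boundary, and there your inequality $H_k[u_j]\ge c\mu/(2\delta_j)-C$ gives no information. The $\kappa$-regularisation of Lemma~2.3 does not help: in that lemma one never needs $\tilde u$ to be smooth across the seam --- one computes a boundary integral and sends $\kappa\to0$. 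Here you need an honest smooth function.

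The paper resolves (b) by a two-stage bootstrap. First, the quadratic profile $\varphi=(\beta-\beta')(t+\epsilon\eta)^2/(4\epsilon\eta)$ (your leading term) yields only a $C^{1,1}$ approximant $u^\epsilon$; the large-$u_{nn}$ argument gives $H_k[u^\epsilon]>0$ a.e.\ inside the tube. Second, with $u^\epsilon$ now $C^1$ (so $Dv=Dv'$ on the new seam), a cubic--quintic profile upgrades to $C^{2,1}$; crucially, positivity in this second step comes \emph{not} from $u_{nn}\sim\epsilon^{-1}$ but from the pointwise bound $u^\epsilon_{nn}\ge\min\{v_{nn},v'_{nn}\}-C\epsilon$ together with $|(u^\epsilon)_{ij}-v'_{ij}|,\ |(u^\epsilon)_{in}-v'_{in}|\le C\epsilon$, which forces $H_k[u^\epsilon]\ge H_k[v']-C\epsilon>0$. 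Iterating (or mollifying the $C^{1,1}$ approximant, at the cost of the support condition) gives the smooth sequence. Your proposal is missing this second mechanism near the tube edges.
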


\begin{proof}
First, we show that $u$ can be approximated by $C^{1, 1}$ smooth $H_k$-subharmonic functions.
Denote $\omega=\{x\in\Om \ |\  v<v'\}$. Then $\Gamma=\partial\omega$.
For any $p\in\partial\omega$, denote $\beta'=D_\gamma v'(p)$,
$\beta=D_\gamma v(p)$, where $\gamma$ is the unit normal of $\Gamma$.
Since $Dv\ne Dv' $ on  $\Gamma$, we have $\beta>\beta'$.
We choose a smooth function $\eta(p)$ on $\partial \omega$
such that $0<\eta<\beta-\beta'$ and $\eta=0$ on $\partial\Gamma$. 
Now for $p\in \partial \omega$ and small $\epsilon>0$, let
$$L(p, \epsilon)=\{p+t\gamma \ |\ -\epsilon\eta(p)\leq t\leq \epsilon\eta(p)\}, 
\text{\ and}\ \ \omega_\epsilon=\bigcup_{p\in\Gamma} L(p,\epsilon).$$
It is clear that any $x\in\omega_\epsilon$, there exists $p\in\partial\omega$
and $-\epsilon\eta\leq t\leq \epsilon\eta$ such that $x=p+t\gamma(p)$.
Note that by the smoothness of $\Gamma$, $x$ corresponds to a unique $(p,t)$. Then we define
\beq
\varphi(x)=\frac{\beta(p)-\beta'(p)}{4\epsilon\eta(p)}(t+\epsilon\eta(p))^2
\eeq
which is a smooth function in $\omega_\epsilon$, and
\beq
u^\epsilon(x)=
\begin{cases}
    u(x) + \varphi(x), & \ x\in{\omega_\epsilon}, -\epsilon\eta(p)\leq t\leq 0, \\[4pt]
    u(x) + \varphi(x)-(\beta(p)-\beta'(p))t, & \ x\in{\omega_\epsilon}, 0\leq t\leq \epsilon\eta(p),\\[4pt]
    u(x), & \ x\in\Omega\setminus\omega_\epsilon.
\end{cases}
\eeq
It is obvious that $u^\epsilon\in C^{1,1}$.
We show that $u^\epsilon$ is $H_k$-subharmonic in $\omega_\epsilon$ and converges to $u$
as $\epsilon\to 0$.
For any $x_0\in\omega_\epsilon$,
we have $x_0\in L (p, \epsilon)$ for some $p\in\Gamma$.
As above, we assume that $p=0$, and $\gamma(p)=(0,\cdots, 0, 1)$.
Denote $A_{ij}=v_{ij}(p)$, $B_{ij}= v'_{ij}(p)$.
Then near $0$, $\Gamma$ is given by
\beq
x_{n}=\rho(x')=\sum_{i, j=1}^{n-1}\frac{B_{ij}-A_{ij}}{2(\beta-\beta')}x_ix_j+O(|x'|^3).
\eeq
As in the proof of Lemma 2.3,  we have, at $x_0$,
\begin{eqnarray}
&&|u^\epsilon_i-u_i|\leq C\epsilon, \ |u^\epsilon_{ni}-u_{ni}|\leq C, \ 1\leq i\leq n-1,\\
&&|u^\epsilon_n-u_n|\leq C,\  u^\epsilon_{nn}-u_{nn}\geq \frac{1}{2\epsilon},\\
&&u^\epsilon_{ij}=(1-\frac{t+\epsilon\eta}{2\epsilon\eta})B_{ij}+\frac{t+\epsilon\eta}{2\epsilon\eta}A_{ij}+O(\epsilon), \ 1\leq i, j\leq n-1
\end{eqnarray}
for sufficiently small $\epsilon$.
To estimate $H_k[u^\epsilon](x_0)$, we again use formula (2.24)
\beq
H_k[u]=\frac{\partial H_k}{\partial u_{nn}}u_{nn}+Q(\nabla u, u_{11},..., u_{ij}, ..., \widehat{u_{nn}}).
\eeq
By (2.29)-(2.31), the quantity $Q\geq -C_1$,
where $C_1$ is a positive constant independent of $\epsilon$.
By the ellipticity assumption that $H_k[v], H_k[v']>0$ in the lemma,
there exists $C_2>0$, such that
$\frac{\partial H_k}{\partial u_{nn}}>C_2$ for $u$ and $v$.
With (2.29)-(2.31), we have $\frac{\partial H_k}{\partial u_{nn}}>\frac{C_2}{2}$ for $u^\epsilon$
provided $\epsilon$ is sufficiently small.
Then
$$H_k[u^\epsilon](x_0)\geq \frac{C_2}{2}\epsilon^{-1}-C_1$$
which implies $H_k[u^\epsilon](x_0)>0$.
Hence, we obtain a sequence of $C^{1, 1}$ smooth $H_k$-subharmonic functions
which converges to $u$.

Next, we construct the $C^{2, 1}$ approximation. But the above construction, it suffices to
consider $u=\max\{v,  v'\}$ in the lemma with further assumption that $u\in C^{1}(\Omega)$.
Then $\omega=\{x\in\Om \ |\  v<v'\}$ and $Dv=Dv'$ on $\Gamma=\partial \omega$. For any $p\in\partial\omega$, denote $\beta=D_{\gamma\gamma} v(p)$,
$\beta'=D_{\gamma\gamma} v'(p)$, where $\gamma$ is the unit normal of $\Gamma$. We have $\beta-\beta'>0$.
Choose a positive smooth function $\eta$  on $\partial \omega$
and $\eta=0$ on $\partial\Gamma$. 
We use the same notations $L(p, \epsilon)$ and $\omega_\epsilon$ as in $C^{1, 1}$ approximation.  
Let
\beq
u^\epsilon(x)=
\begin{cases}
    u(x) + \varphi(x), & \ x\in{\omega_\epsilon}, -\epsilon\eta(p)\leq t\leq 0, \\[4pt]
    u(x) + \psi(x), & \ x\in{\omega_\epsilon}, 0\leq t\leq \epsilon\eta(p),\\[4pt]
    u(x), & \ x\in\Omega\setminus\omega_\epsilon
\end{cases}
\eeq
where 
\beqs
\varphi(x)&=&\frac{\beta-\beta'}{24\epsilon\eta(p)}(t+\epsilon\eta(p))^3,\\
\psi(x)&=&-\frac{13(\beta-\beta')}{24\epsilon\eta(p)}(t-\epsilon\eta(p))^3-\frac{3(\beta-\beta')}{4(\epsilon\eta(p))^2}(t-\epsilon\eta(p))^4-\frac{\beta-\beta'}{4(\epsilon\eta(p))^3}(t-\epsilon\eta(p))^5.
\eeqs
One can check that $u^\epsilon\in C^{2,1}$. 
We show that $u^\epsilon$ is $H_k$-subharmonic in $\omega_\epsilon$ and converges to $u$
as $\epsilon\to 0$.  
For any $x_0\in\omega_\epsilon$,
we have $x_0\in L (p, \epsilon)$ for some $p\in\Gamma$.
By a transformation, we assume that $p=0$, and $\gamma(p)=(0,\cdots, 0, 1)$. 
By the assumption, we have 
\beqs
&&Dv(p)=Dv'(p)=Du(p),\\ 
&&v_{ij}(p)=v'_{ij}(p)=u_{ij}(p),  i, j=1, ..., n-1,\\ 
&&v_{in}(p)=v'_{in}(p)=u_{in}(p), i=1, ..., n-1.
\eeqs 
The main point is that under the construction (2.33), $Du$, $\{u_{ij}\}_{i, j=1}^{n-1}$,
$\{u_{in}\}_{i, j=1}^{n-1}$  change very little, while $u_{nn}$ will always 
be no less than $\min\{v_{nn}, v'_{nn}\}$. First, by the above facts, 
$\Gamma$ is given by
\beq
x_{n}=\rho(x')=\sum_{i, j=1}^{n-1}A_{ij}x_ix_j+O(|x'|^3)
\eeq
near $0$ for some $A_{ij}$, $i, j=1, ..., n-1$.
Then by similar arguments and computation as before, we have
\begin{eqnarray}
&&|u^\epsilon_i(x_0)-v'_i(p)|\leq C\epsilon, \ , \ 1\leq i\leq n,\\[3pt]
&&|u^\epsilon_{ni}(x_0)- v'_{ni}(p)|\leq C\epsilon,\  |u^\epsilon_{ij}(x_0)-v'_{ij}(p)|\leq C\epsilon, \ 1\leq i, j\leq n-1,
\end{eqnarray}
for sufficiently small $\epsilon$.
Furthermore, since $\varphi_{nn}>0$ and
\beqs
\psi_{nn}&=&-\frac{13(\beta-\beta')}{4\epsilon\eta(p)}(t-\epsilon\eta(p))-\frac{9(\beta-\beta')}{(\epsilon\eta(p))^2}(t-\epsilon\eta(p))^2-\frac{5(\beta-\beta')}{(\epsilon\eta(p))^3}(t-\epsilon\eta(p))^3\\
&>&-(\beta-\beta'),
\eeqs
we have
\beq
u^\epsilon_{nn}(x_0)\geq  v'_{nn}(p)-C\epsilon
\eeq
for sufficiently small $\epsilon$.
Using formula (2.24) with (2.35)-(2.37) and the ellipticity assumption that $H_k[v], H_k[v']>0$, we have
$$H_k[u^\epsilon](x_0)\geq H_k[v'](p)-C\epsilon>0$$
as $\epsilon \to 0$.

By choosing the function $\varphi$ more carefully, the sequence can be made
$C^{l, 1}$ for any $l\geq 3$. Actually, the $C^{2, 1}$ approximation is enough for the purpose of this paper.
\end{proof}

We point out that for $l \geq 2$, the construction of the $C^{l, 1}$ approximation by small modification
is complicated. If we do not request $u_j=u$ outside a small neighbourhood of $\Gamma$, then there are
other ways to get the $C^{\infty}$ approximation from the $C^{1, 1}$ approximation. One way is to mollify the $C^{1, 1}$ function $u^\eps$ constructed in the proof of Lemma 2.4 directly by convolution. The 
The other way is to consider the initial-boundary value problem to an associated parabolic equation
and obtain a smooth solution $u(x, t)$ with initial condition $u^\eps$.
Then $u(\cdot, t)$, as $t\to 0$, gives another smooth approximation.

\section{Perron lifting}


To prove Theorem 1.1,
we assume that there exists two sequences of
bounded $H_k$-subharmonic functions $\{u_j\}$, $\{v_j\}$
in $\mathcal{SH}_k(\Omega)$
which converge to an $H_k$-subharmonic function $u$ a.e. in $\Omega$.
Let $B_{r}(x_0)$ and $B_{r+t}(x_0)$ be two balls in $\Om$.
The purpose of this section is to modify $u_j$ and $v_j$ in the annulus
$B_{r+t}-B_r$ such that they are locally uniformly Lipschitz continuous and
$|u_j-v_j|\to 0$ locally uniformly in the annulus.

For this purpose we use the Perron lifting for $k$-curvature equations,
following the treatment in \cite{DTW} for the mean curvature equation.
See also \cite{HKM} for quasilinear elliptic equations.
As the argument is very similar, we will sketch the proof only.

However let us point out a difference, that is for the mean curvature equation,
by the interior regularity one obtains a sequence of piecewise smooth functions
in the annulus. For the $k$-curvature for $1<k<n$,
by the interior gradient estimate \cite {K, W1},
we only obtain a sequence of piecewise Lipchitz continuous functions
in the annulus. In the next section we will show how
to obtain a smooth approximation in the annulus.

Let $u$ be a $H_k$-subharmonic function in $\Omega$ and
$\omega\Subset\Omega$ be a subdomain of
$\Omega$. The {\it Perron lifting} of $u$, $u^\omega$,
is the upper semicontinuous regularization \cite{HKM} of
$$\tilde u=\{v \ |\  \ \text{$v$ is $H_k$-subharmonic in $\Omega$
and $v\leq u$ in $\Omega\setminus\omega$}\},$$
i.e.,
$$u^\omega(x)=\lim_{t\to 0}\sup_{B_t(x)}\tilde u.$$
It is clear that $u^\omega\geq u$ in $\Omega$ and $u^\omega=u$ in $\Omega\setminus\bar\omega$ but
$u^\omega=u$ on $\partial\omega$ may not be true.

In order to study the properties of Perron lifting,
we first recall the existence of solutions to the Dirichlet problem for $k$-curvature equations \cite{I2, T1, T2}.
For any $C^2$ domain, we denote by $H_{k}[\partial\Omega]$
the $k$-curvature of the boundary $\partial\Omega$.
Let us quote the following two lemmas (Theorem 4 and 5, \cite{T2}).

\begin{lem}
Assume $\partial\Omega\in{C^{2}}$, $\varphi\in C^{0}(\partial\Omega)$,
$f^{\frac{1}{k}}\in{C^{1,1}(\overline\Omega)}$, $f>0$ in $\Omega$.
Suppose
\begin{eqnarray}
f(x)& \leq & H_{k}[\partial\Omega],\ \  x\in\partial\Omega,\\
\int_E f &\leq & \frac{1-\lambda}{k}H_{k-1}[\partial E]
\end{eqnarray}
for some $\lambda>0$ and
all subdomains $E\subset\Omega$ with $(k-1)$-convex boundary $\partial E\in C^2$.
Then there exists a unique, $H_k$-subharmonic,
viscosity solution $u\in C^0(\overline \Omega)$, which is locally
uniformly Lipschitz continuous in $\Omega$
to the Dirichlet problem
\beq
\begin{cases}
   H_k[u]=f(x)   & \text{\ \  in $\Omega$}, \\
    u=\varphi  & \text{\ \ on $\partial\Omega$}.
\end{cases}
\eeq
If $\varphi\in C^{1,1}(\overline\Omega)$, then $u\in C^{0,1}(\overline\Omega)$.
\end{lem}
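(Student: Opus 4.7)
Since the statement is quoted directly from Trudinger's work on the Dirichlet problem for $H_k$, the plan is to reconstruct the standard program: solve the smooth problem via the method of continuity with a priori estimates, then obtain the viscosity solution for merely continuous boundary data by approximation and stability, using the interior gradient estimate of \cite{K, W1} to propagate regularity to the limit.

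First, I would reduce to the case of smooth data. Approximate $\varphi \in C^0(\pom)$ uniformly by a sequence $\varphi_j \in C^{1,1}(\bom)$, and for each $j$ consider the problem $H_k[u_j] = f$ with $u_j = \varphi_j$ on $\pom$. To solve this by continuity, one deforms $f$ through a family $f_t = t f + (1-t) f_0$ for a convenient initial datum $f_0$ (e.g.\ one for which a solution is known explicitly), and shows the set of $t \in [0,1]$ for which a classical admissible solution exists is both open and closed. Openness uses that the linearised operator $H^{ij} \partial_{ij} + (\mbox{lower order})$ is uniformly elliptic on admissible functions; closedness requires a priori $C^{2,\alpha}$ estimates independent of $t$.

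The a priori estimates proceed in the usual order. The $C^0$ bound comes from comparison with constants or a global barrier. The boundary gradient estimate is constructed from upper and lower barriers exploiting precisely the hypothesis $f(x)\leq H_k[\pom]$: one uses the distance function to $\pom$ (with the boundary $k$-curvature controlling a supersolution from above) to pin down $u_\gamma$ on $\pom$. The interior gradient estimate is the one proved in \cite{K, W1}. The boundary second derivative estimates come from the admissibility cone and the shape of $\pom$. The truly delicate step is the global $C^2$ bound: here the integral hypothesis $\int_E f \leq \frac{1-\lam}{k} H_{k-1}[\p E]$ on every $(k-1)$-convex subdomain $E$ enters; it is essentially the sharp structural condition that lets one control $\sup_{\bom}|D^2 u|$ via $\sup_{\pom}|D^2 u|$ while keeping $u_j$ admissible along the continuity path. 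Once $|D^2 u|$ is bounded, Evans--Krylov gives $C^{2,\alpha}$, closing the continuity method and producing classical solutions for each $\varphi_j$.

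Finally, to pass to the viscosity solution for general $\varphi \in C^0(\pom)$, apply the interior Lipschitz estimate of \cite{K, W1} (uniform in $j$) together with the $C^0$ bound to get a locally uniformly Lipschitz family $\{u_j\}$. By Arzel\`a--Ascoli and the comparison principle for $H_k$ (which gives uniqueness, hence identifies the limit independent of subsequence), $u_j \to u$ locally uniformly in $\Om$. Stability of viscosity solutions under uniform convergence gives that $u$ solves $H_k[u] = f$ in the viscosity sense, while the barrier constructions used for the boundary gradient estimate yield $u \in C^0(\bom)$ with $u = \varphi$ on $\pom$; if $\varphi \in C^{1,1}$, the barriers are global and one gets $u \in C^{0,1}(\bom)$. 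Uniqueness follows from the comparison principle in the admissible class.

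The main obstacle is the global $C^2$ estimate: unlike for the $k$-Hessian, the $k$-curvature operator depends on the gradient through $w=\sqrt{1+|Du|^2}$, which produces additional lower-order terms that obstruct the standard maximum-principle arguments. The integral condition is the mechanism that tames these terms and simultaneously prevents the principal curvatures from escaping $\bar\Gamma_k$ along the continuity path; verifying that this condition is preserved and actually suffices to close the estimate is the deep point of the proof.
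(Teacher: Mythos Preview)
The paper does not prove this lemma at all: it is explicitly introduced as a quotation of Theorems 4 and 5 from Trudinger \cite{T2}, and no argument is supplied. So there is nothing in the paper to compare your reconstruction against; your plan to rebuild Trudinger's program is simply extra work relative to what the paper does.

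That said, your sketch contains a substantive misattribution worth correcting. You locate the integral hypothesis
\[
\int_E f \leq \frac{1-\lambda}{k} H_{k-1}[\partial E]
\]
at the global $C^2$ step, calling it ``the mechanism that tames'' the gradient-dependent lower-order terms in the second-derivative estimate. This is not its role. Condition (3.2) is a Serrin-type structural condition that is \emph{necessary} for solvability (cf.\ Remark 3.3 in the paper) and, with the $\lambda$-gap, furnishes the a priori $C^0$ bound via the divergence structure $H_k[u]=\frac{1}{k}\big(H^{ij}u_j/w\big)_i$; integrating over $E$ bounds $\int_E f$ by a boundary quantity controlled by $H_{k-1}[\partial E]$, and saturation signals blow-up of $u$. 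The $C^2$ estimates in this theory (see Ivochkina's result quoted as Theorem 6.2 in Appendix~1 of the paper) require only $f>0$ and the pointwise boundary condition $f<H_k[\partial\Omega]$, and are obtained by maximum-principle arguments exploiting the concavity of $H_k^{1/k}$; condition (3.2) does not enter there. Your outline of the continuity method, barriers for the boundary gradient, the interior gradient estimate of \cite{K, W1}, Evans--Krylov, and the passage to viscosity solutions by approximation is otherwise the correct architecture.
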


\begin{lem}
Suppose that $\partial\Omega\in{C^{3,1}}$, $\varphi\in C^{3,1}(\overline\Omega)$,
$f\in{C^{1,1}(\overline\Omega)}$, $f>0$ in $\Omega$.
Suppose (3.1), (3.2) hold. Then there exists a unique, $H_k$-subharmonic,
classical solution to (3.3).
\end{lem}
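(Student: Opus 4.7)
The plan is to obtain the classical solution by the standard method of continuity, deforming the equation $H_k[u]=f$ to an equation with a known $H_k$-subharmonic classical solution. Concretely, I would consider the family
\begin{equation*}
H_k[u_t] = tf + (1-t)f_0 \quad \text{in } \Omega, \qquad u_t = t\varphi + (1-t)\varphi_0 \quad \text{on } \partial\Omega,
\end{equation*}
where $f_0, \varphi_0$ are chosen so that the problem at $t=0$ has an explicit $H_k$-subharmonic classical solution (for instance, $u_0$ a suitable paraboloid or the solution associated with a known admissible structure), and $f_t, \varphi_t$ still satisfy (3.1) and (3.2) uniformly in $t$. Openness along the path follows from the implicit function theorem applied in the space of $H_k$-subharmonic $C^{2,\alpha}$ functions, using the ellipticity of the linearised operator on the admissible cone. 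Closedness is the crux and reduces to uniform $C^{2,\alpha}(\overline\Omega)$ a priori estimates for any $H_k$-subharmonic classical solution of (3.3).

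For the a priori estimates, I would first get the $C^0$ bound by comparison: an upper barrier comes from a large $H_k$-subharmonic function with prescribed boundary data (using (3.1), (3.2) and the divergence structure (2.4)), while a lower barrier comes from solving the linear Poisson problem and using that subharmonicity with respect to $H_1$ is implied by $H_k$-subharmonicity. The $C^1$ estimate splits into an interior gradient bound, available from the Korevaar/Wang interior gradient estimate cited as \cite{K, W1} and valid precisely because of condition (3.2), plus a boundary gradient estimate obtained by upper/lower $C^{1,1}$ barriers constructed from $\varphi \in C^{3,1}(\overline\Omega)$ and the admissibility of $\partial\Omega$ inherited from (3.1).

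The main obstacle is the $C^2$ estimate. For the second-order interior bound I would differentiate the equation twice in a tangential direction, use the concavity of $H_k^{1/k}$ on $\Gamma_k$, and combine with an auxiliary test function of the form $\eta(x)|D^2 u|$ with a cutoff $\eta$ adapted to $|Du|$; the resulting maximum principle argument yields $|D^2 u| \le C(1+\sup_\Omega|D^2u|_{\partial\Omega})$. The boundary $C^2$ estimate is the most delicate point: tangential-tangential derivatives at $\partial\Omega$ are controlled by differentiating the boundary condition twice, mixed tangential-normal derivatives are handled by constructing a local barrier $\pm(\xi\cdot (Du - D\varphi)) + A d(x) - B d(x)^2$ and using that $f \le H_k[\partial\Omega]$ on $\partial\Omega$ to make the barrier a supersolution of the linearised equation, and the pure normal-normal derivative $u_{\gamma\gamma}$ is recovered algebraically from the equation $H_k[u]=f$ combined with the already-estimated tangential derivatives, once condition (3.1) guarantees that the relevant $(k-1)$-th elementary symmetric function of tangential curvatures stays strictly positive.

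Having $C^2(\overline\Omega)$ uniformly, Evans–Krylov applied to the concave operator $H_k^{1/k}$ on $\Gamma_k$ upgrades this to a uniform $C^{2,\alpha}(\overline\Omega)$ estimate, which closes the continuity argument and produces a classical $H_k$-subharmonic solution. Uniqueness is immediate from the comparison principle for elliptic admissible solutions of (3.3). The hardest step, as signalled above, is the boundary $C^2$ estimate under the weak hypothesis (3.1) rather than the more comfortable assumption of $(k-1)$-convexity of $\partial\Omega$; this is where the precise strength of Trudinger's result lies.
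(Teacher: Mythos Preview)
The paper does not prove this lemma at all: it is quoted verbatim from Trudinger \cite{T2} (Theorem~5), as the sentence ``Let us quote the following two lemmas (Theorem~4 and~5, \cite{T2})'' immediately preceding Lemmas~3.1 and~3.2 makes explicit. Your outlined continuity-method argument, with the hierarchy $C^0 \to C^1 \to C^2 \to C^{2,\alpha}$ of a priori estimates and Evans--Krylov to close, is precisely the strategy carried out in the cited works of Trudinger \cite{T1,T2}, Ivochkina \cite{I1,I2}, and Caffarelli--Nirenberg--Spruck \cite{CNS}, so your proposal is correct and matches the original source rather than the present paper.

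One small remark: your final sentence suggests that (3.1) is a weaker hypothesis than $(k-1)$-convexity of $\partial\Omega$, but since $f>0$, condition (3.1) forces $H_k[\partial\Omega]>0$; in the original Trudinger argument this, together with the structure of the boundary estimate, is what makes the normal--normal second derivative recoverable. Also, condition (3.2) is used primarily for the $C^0$ bound (it is the Serrin-type necessary condition for solvability, cf.\ Remark~3.3) rather than for the interior gradient estimate, which in \cite{K,W1} depends only on $\sup|u|$.
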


\begin{rem}
It is shown in \cite{T2} that (3.2) is a necessary condition for the solvability of
 $k$-curvature equations.
\end{rem}

By choosing a sequence of $f_i\to 0$ and using approximation, we have

\begin{cor}
Assume $\partial\Omega\in{C^{2}}$, $H_{k}[\partial\Omega]>0$
and $\varphi\in C^{1,1}(\overline\Omega)$.
Then there exists a unique, $H_k$-subharmonic,
viscosity solution $u\in C^{0,1}(\overline \Omega)$ to the Dirichlet problem
\beq
\begin{cases}
   H_k[u]=0  & \text{\ \  in $\Omega$}, \\
    u=\varphi  & \text{\ \ on $\partial\Omega$}.
\end{cases}
\eeq
\end{cor}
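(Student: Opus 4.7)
The plan is to obtain $u$ as the uniform limit of solutions $u_i$ to the nondegenerate problems $H_k[u_i]=c_i>0$ with $c_i\downarrow 0$, via Lemma 3.1 and the Arzel\`a--Ascoli theorem.

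For each small $c_i>0$, I would first verify that the hypotheses (3.1) and (3.2) of Lemma 3.1 hold: (3.1) follows because $H_k[\pom]$ attains a positive minimum on the compact boundary, so $c_i<\min_\pom H_k[\pom]$ suffices; (3.2) follows because the $k$-convexity of $\pom$ implied by $H_k[\pom]>0$ yields a Minkowski-type lower bound $\int_{\p E}H_{k-1}\,d\sigma\geq c(n,k,\Om)|E|$ on $(k-1)$-convex subdomains $E\Subset\Om$, making (3.2) hold with some $\lam>0$ for all $c_i$ sufficiently small. Lemma 3.1 then produces a unique $H_k$-subharmonic viscosity solution $u_i\in C^0(\bom)$ of $H_k[u_i]=c_i$ with $u_i=\varphi$ on $\pom$, and the assumption $\varphi\in C^{1,1}(\bom)$ promotes $u_i$ to $C^{0,1}(\bom)$.

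Next I would establish a uniform bound on $\|u_i\|_{C^{0,1}(\bom)}$. Boundary Lipschitz control comes from upper and lower barriers built from $\varphi$ and the strict $k$-convexity of $\pom$; these barriers depend only on $\|\varphi\|_{C^{1,1}}$ and $\min_\pom H_k[\pom]$, independently of $c_i$. Interior gradient control is furnished by the estimates of \cite{K,W1} applied to $H_k[u_i]=c_i$ with $c_i$ uniformly bounded above. Arzel\`a--Ascoli then extracts a subsequence converging uniformly on $\bom$ to some $u\in C^{0,1}(\bom)$ satisfying $u=\varphi$ on $\pom$. The standard stability of viscosity solutions under uniform convergence of the right-hand side gives $H_k[u]=0$ in the viscosity sense, and $u\in \mathcal{SH}_k(\Om)$ follows from the closedness of $\bar\Gamma_k$ at each test point. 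Uniqueness is then a direct consequence of the viscosity comparison principle for $H_k[u]=0$ on the admissible cone.

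The main technical obstacle is maintaining the Lipschitz bound uniformly as the equation degenerates in the limit $c_i\to 0$. The resolution is that the barrier construction at $\pom$ uses only $\varphi$ and the data $H_k[\pom]>0$, while the interior estimate depends only on an upper bound for $c_i$, so neither ingredient deteriorates along the approximating sequence.
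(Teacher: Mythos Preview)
Your proposal is correct and is exactly the paper's approach: the paper's entire proof of this corollary is the single sentence ``By choosing a sequence of $f_i\to 0$ and using approximation, we have [the corollary].'' Your outline fills in precisely those details --- uniform barriers at $\pom$ independent of $c_i$, the interior gradient estimates of \cite{K,W1} for a $c_i$-independent Lipschitz bound, and passage to the limit via Arzel\`a--Ascoli and stability of viscosity solutions.
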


We have the following existence and Lipschitz continuity
for the Perron lifting.

\begin{lem}
Let $u\in\mathcal{SH}_k(\Omega)$.
Then for any open set $\omega\Subset\Omega$,
the Perron lifting $u^\omega$ is $H_k$-subharmonic function in $\Omega$.
If we further assume $u\in L_{loc}^\infty(\Omega)$, then $u^\omega$ is
locally Lipchitz continuous in $\omega$ for any open set $\omega\Subset\Omega$.
\end{lem}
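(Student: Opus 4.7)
The plan is to execute the classical Perron construction adapted to the $k$-curvature operator, invoking Corollary 3.4 for auxiliary Dirichlet problems and the interior gradient estimate of \cite{K, W1} for the Lipschitz bound.

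For the $H_k$-subharmonicity of $u^\omega$ in $\Omega$, I first note that $u$ itself lies in $\tilde u$, so the family is nonempty, and that $\mathcal{SH}_k(\Omega)$ is closed under finite maxima (an immediate consequence of the viscosity definition). By Choquet's lemma, $u^\omega$ equals the upper semicontinuous regularization of $\sup_j v_j$ for some increasing sequence $\{v_j\}\subset\tilde u$, so it is enough to check that such a regularized supremum is $H_k$-subharmonic. Given any $\omega'\Subset\Omega$ and a smooth $h$ with $H_k[h]\leq 0$ and $h\geq u^\omega$ on $\partial\omega'$, an $\epsilon$-argument using the upper semicontinuity of $u^\omega$ on the compact set $\partial\omega'$ yields $v_j\leq h+\epsilon$ on $\partial\omega'$ for each fixed $j$, hence $v_j\leq h+\epsilon$ throughout $\omega'$ by the $H_k$-subharmonicity of $v_j$; sending $j\to\infty$, taking the usc regularization, and then $\epsilon\to 0$ gives $u^\omega\leq h$ in $\omega'$.

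For the local Lipschitz continuity in $\omega$, fix $x_0\in\omega$ and take a small ball $B=B_r(x_0)\Subset\omega$; for $r$ small enough $H_k[\partial B]>0$, so Corollary 3.4 applies. Approximate $u^\omega|_{\partial B}$ from above by a decreasing sequence of $C^{1,1}$ boundary data $\varphi_j$ and let $w_j\in C^{0,1}(\overline B)$ be the viscosity solution of $H_k[w_j]=0$ in $B$ with $w_j=\varphi_j$ on $\partial B$ provided by Corollary 3.4. The viscosity comparison principle gives $u^\omega\leq w_j$ in $B$. Conversely, pasting $w_j$ inside $B$ with $u^\omega$ outside and taking the upper semicontinuous regularization produces a competitor in the family $\tilde u$, which forces $w_j\leq u^\omega$ in $B$, hence $u^\omega=w_j$ there. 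The interior gradient estimate of \cite{K, W1} applied to $w_j$ then yields a bound $|Dw_j|\leq C$ on $B_{r/2}(x_0)$ depending only on $\|u\|_{L^\infty(\Omega)}$ and $r$, independent of $j$, which is the required Lipschitz bound for $u^\omega$.

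The main technical obstacle is the pasting step: because the smooth approximate data $\varphi_j$ strictly dominate $u^\omega$ on $\partial B$, verifying that the glued function is a legitimate element of $\tilde u$ requires care with upper semicontinuous regularization across $\partial B$ together with a limit $j\to\infty$ to close the inequality. This parallels the mean curvature case of \cite{DTW}, the difference being that for $1<k<n$ the absence of interior $C^{1,\alpha}$ regularity means one can only expect piecewise Lipschitz (rather than piecewise smooth) approximants in the annulus, which is precisely why the deformation-through-obstacle-problems construction of \S4 is needed afterward.
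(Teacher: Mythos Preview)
Your argument for the $H_k$-subharmonicity of $u^\omega$ is fine and simply fleshes out what the paper dismisses as ``follows by definition''.

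The Lipschitz part, however, has a genuine gap in the pasting step. For fixed $j$ you have $w_j=\varphi_j>u^\omega$ on $\partial B$, so the glued function $V_j$ (equal to $w_j$ in $B$ and $u^\omega$ outside) has a strict upward jump on $\partial B$ when approached from outside. Taking the upper semicontinuous regularization does not help: $V_j^*$ equals $\varphi_j$ on $\partial B$ and $u^\omega<\varphi_j$ just outside, and such a function is \emph{not} $H_k$-subharmonic across $\partial B$ (a smooth supersolution touching $V_j^*$ from above at a point of $\partial B$ need not dominate $w_j$ inside). Thus $V_j^*\notin\tilde u$, the inequality $w_j\le u^\omega$ fails, and the conclusion ``$u^\omega=w_j$'' is simply false for finite $j$ --- as it must be, since the $w_j$ form a strictly decreasing sequence. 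Your final paragraph shows you sense the difficulty, but invoking ``$j\to\infty$'' is not enough: one still has to prove that the limit $\hat w=\lim w_j$ satisfies $\limsup_{B\ni x\to x_0}\hat w(x)\le u^\omega(x_0)$ on $\partial B$, and this requires a barrier construction you do not mention.

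The paper's route is close to yours but sidesteps the issue: it approximates $u$ (not $u^\omega$) from above by smooth $\varphi_j$, solves the same Dirichlet problems, and \emph{first} passes to the limit $\hat u=\lim u_j$. A barrier at each boundary point then yields $\limsup_{B\ni x\to x_0}\hat u(x)\le u(x_0)$, so that the extension of $\hat u$ by $u$ outside $B$ satisfies the hypothesis of the standard gluing lemma and is genuinely $H_k$-subharmonic. That barrier step --- passing to the limit before gluing and controlling the boundary trace --- is exactly the ingredient missing from your proposal.
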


\begin{proof}
The property that $u^\omega$ is $H_k$-subharmonic function in $\Omega$ follows by definition.
It suffices to show that $u^\omega$ locally Lipchitz continuous in $\omega$ when $u\in L_{loc}^\infty(\Omega)$.
We outline the proof here, as it is similar to that in \cite {DTW, HKM}. The idea is as follows.
Since $u$ is upper semicontinuous, there exists a sequence $\{\varphi_j\}$ of smooth functions in $\Omega$
such that $\varphi_j\searrow u$. Assume $B\Subset \omega$ is a ball. By Corollary 3.4,
there is a solution $u_j\in C^{0,1}(\overline B)$  to
\beq
\begin{cases}
   H_k[u]=0  & \text{\ \  in $B$}, \\
    u=\varphi_j  & \text{\ \ on $\partial B$}.
\end{cases}
\eeq
 Furthermore, since $u$ is bounded,
 by the interior gradient estimate \cite {K, W1},
 the decreasing sequence $u_j$ is locally uniformly Lipchitz.
 Hence $u_j$ converges locally, uniformly
to a locally Lipchitz function $\hat u$ .
It is obvious that $u_j\ge u$ and hence $\hat u\geq u^\omega\geq u$.
By a barrier construction, we can show that $\hat u\leq u$ on $\partial B$, in the sense that
for any given $x_0\in \partial B$, $\lim_{x\to x_0} \hat u(x)\leq u(x_0)$.
Now extend $\hat u$ to $\Omega$ so that $\hat u=u$ in $\Omega\setminus B$.
$\hat u$ is $H_k$-subharmonic in $\Omega$,
which implies that $\hat u=u^\omega$ by the definition of $u^\omega$.
\end{proof}

We also need the following convergence of the Perron lifting.

\begin{lem}
Let $u_j$ be a sequence of uniformly bounded $H_k$-subharmonic
functions which converges to $u\in\mathcal{SH}_k(\Omega)$ a.e.
Let $B_{r_0}(x_0)\subset \Om$. Then for a.e. $r\in (0, r_0)$,
we have $u_j^{B_r}\to u^{B_r}$ a.e. in $\Om$, as $j\to\infty$.
\end{lem}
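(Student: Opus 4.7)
The plan is to follow the blueprint of the analogous argument in \cite{DTW}: extract a locally uniformly convergent subsequence of $\{u_j^{B_r}\}$, identify its limit with $u^{B_r}$ by a two-sided comparison with envelopes, and conclude by noting that the identification is independent of the subsequence, so the full sequence converges.

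First I would observe that outside $B_r$ one has $u_j^{B_r}=u_j$, so $u_j^{B_r}\to u=u^{B_r}$ a.e.\ in $\Omega\setminus\overline{B_r}$ automatically, and the work is concentrated inside $B_r$. By Lemma 3.5, each $u_j^{B_r}$ is locally Lipschitz in $B_r$ with a local Lipschitz constant controlled only by $\sup|u_j|$ and the interior gradient estimate of \cite{K,W1}, so the family is locally equicontinuous. Uniform boundedness plus the $L^1_{\mathrm{loc}}$-compactness of bounded (ordinary) subharmonic functions then lets me pass, along a subsequence, to a locally uniform limit $\hat u$ in $B_r$ and an a.e.\ limit in $\Omega$; the usc regularization $\hat u^*$ is $H_k$-subharmonic in $\Omega$.

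Next I would identify $\hat u$ with $u^{B_r}$ by a two-sided comparison. For one direction, $u_j^{B_r}\ge u_j$ gives $\hat u\ge u$ a.e.; combined with $\hat u=u$ a.e.\ in $\Omega\setminus\overline{B_r}$, the extremal characterization of the Perron lifting forces $\hat u\le u^{B_r}$ a.e. For the reverse inequality I would introduce the decreasing envelopes $V_N=(\sup_{j\ge N}u_j)^*$, which are $H_k$-subharmonic, dominate every $u_j$ with $j\ge N$, and decrease a.e.\ to $u$. Their Perron liftings $V_N^{B_r}$ are monotone decreasing in $N$, dominate $u_j^{B_r}$ for $j\ge N$, and hence $V_N^{B_r}\ge \hat u$. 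A monotone-convergence/comparison step---solving the Dirichlet problem $(3.4)$ in $B_r$ with boundary data $V_N|_{\partial B_r}\searrow u|_{\partial B_r}$ via Corollary 3.4, and invoking Lemma 3.1 inside $B_r$---then yields $V_N^{B_r}\searrow u^{B_r}$, giving $\hat u\le u^{B_r}$ as well.

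The restriction to a.e.\ $r\in(0,r_0)$ enters precisely at this last monotone step: I need $V_N|_{\partial B_r}\to u|_{\partial B_r}$ in a sense compatible with the comparison principle, and this holds only for those radii for which $\partial B_r$ avoids the (measure-zero) polar set on which upper semicontinuous regularizations inflate above their a.e.\ pointwise values. A Fubini argument on the spherical slicing of $\Omega$ shows the exceptional radii form a null set. The main obstacle is precisely this boundary passage at $\partial B_r$: without any interior regularity theory for $H_k[u]=0$ when $k>1$, I must lean on the barrier construction built into Corollary 3.4 together with the uniform Lipschitz control of Lemma 3.5 to stabilize the boundary values of the approximants on generic spheres. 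Once $\hat u=u^{B_r}$ is established a.e.\ independently of the subsequence, the full sequence $u_j^{B_r}$ converges to $u^{B_r}$ a.e.\ in $\Omega$, completing the proof.
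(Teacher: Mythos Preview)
There is a genuine gap in your two-sided comparison: both halves prove the \emph{same} inequality $\hat u\le u^{B_r}$, and the reverse bound $\hat u\ge u^{B_r}$ is never established. Your first step is fine: $\hat u$ is $H_k$-subharmonic, agrees with $u$ a.e.\ outside $B_r$, and is therefore a competitor in the Perron definition, giving $\hat u\le u^{B_r}$. But the envelope step repeats this direction. From $V_N\ge u_j$ you get $V_N^{B_r}\ge u_j^{B_r}$, hence $V_N^{B_r}\ge \hat u$; and from $V_N\ge u$ you also get $V_N^{B_r}\ge u^{B_r}$. So $V_N^{B_r}$ dominates \emph{both} $\hat u$ and $u^{B_r}$, and the conclusion $V_N^{B_r}\searrow u^{B_r}$ yields only $u^{B_r}\ge \hat u$ again. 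A lower-envelope $W_N=\inf_{j\ge N}u_j$ would point the inequality the right way, but the infimum of $H_k$-subharmonic functions is not $H_k$-subharmonic, so that route is blocked; nor does a.e.\ convergence of $u_j$ on $\partial B_r$ give the one-sided control $u_j\ge u-\varepsilon$ needed to push $u^{B_r}$ under $u_j^{B_r}$ by comparison.

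The paper (following \cite{DTW}) supplies the missing lower bound by a different mechanism: monotonicity of the Perron lifting in the \emph{radius}. Since $r\mapsto u^{B_r}$ is increasing, one has
\[
\lim_{r'\uparrow r}u^{B_{r'}}\le u^{B_r}\le \lim_{r''\downarrow r}u^{B_{r''}},
\]
and the inner and outer limits coincide for a.e.\ $r\in(0,r_0)$. This lets one sandwich $u_j^{B_r}$ between liftings of $u$ on slightly smaller and slightly larger balls, and the ``a.e.\ $r$'' restriction in the statement enters precisely to make the sandwich close. Your Fubini remark is aimed at the right exceptional set of radii, but it is attached to the wrong comparison; the monotonicity in $r$ is the key idea you are missing.
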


The proof is by the monotonicity of $u^{B_r}$ for $r\in (0, r_0)$,
i.e, $u^{B_r}$ is increasing in $r$ and
$$\lim_{r\to \delta^-}u^{B_r}\leq u^{B_\delta}\leq \lim_{r\to \delta^+}u^{B_r}.$$
It is similar to that in \cite {DTW}. We refer the reader to \cite {DTW}
for details.


\section{Approximation}


In this section, we prove that every $H_k$-subharmonic function can be approximated
by a sequence of smooth $H_k$-subharmonic functions.
The approximation for the case $k=1$ was obtained in \cite{DTW}
by the Perron lifting in small balls and using the mollification
as in the proof of Lemma 2.4.
However, when $k>1$,  the Perron lifting of an $H_k$-subharmonic function
may fail to be smooth.
In this paper we introduce a new technique to prove the approximation,
by considering an obstacle problem of the equation.
The regularity of the solution to the obstacle problem is proved in the Appendix 1.

For  $u\in C^0(\overline\Omega)$, define the $H_k$-subharmonic envelope
\beq
\tilde u=\sup\{v\in \mathcal {SH}_k(\Omega)\cap C^0(\Omega)\ | \ \text{$v\leq u$ in $\Omega$}\}.
\eeq
It is the greatest subsolution to an obstacle problem for the $k$-curvature equation.

\begin{theo}
Let $u\in\mathcal{SH}_k(\Omega)$. Then for any ball $B_R\Subset\Omega$,
there is a sequence of smooth $H_k$-subharmonic functions
$u_j\in C^\infty(B_R)$
converging to $u$ in $B_R$.
\end{theo}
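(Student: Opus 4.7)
The strategy is to approximate $u$ from above by smooth functions and then project them back into $\mathcal{SH}_k$ via the envelope construction (4.1). Since $u$ is upper semicontinuous and locally bounded, standard mollification on a slightly enlarged ball $B_R \Subset B_{R'} \Subset \Omega$ gives smooth $\varphi_j \in C^\infty(\overline{B_{R'}})$ with $\varphi_j \searrow u$ pointwise on $B_{R'}$. The goal is then to replace each $\varphi_j$ by a smooth $H_k$-subharmonic function that still dominates $u$ and converges to it.

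First I would introduce a small quadratic perturbation of the obstacle: let $\epsilon_j \searrow 0$ and set $\psi_j = \varphi_j + \epsilon_j(|x-x_0|^2 - R'^2)$, where $x_0$ is the centre of $B_R$. Let $\tilde\varphi_j$ be the $H_k$-subharmonic envelope of $\psi_j$ on $B_{R'}$ as defined in (4.1). Because $u \in \mathcal{SH}_k(B_{R'})$ satisfies $u \le \varphi_j \le \psi_j$ on $B_R$, $u$ itself is an admissible competitor, so $u \le \tilde\varphi_j \le \psi_j$ on $B_R$, and hence $\tilde\varphi_j \to u$ pointwise on $B_R$. The perturbation plays a twofold role: on the contact set $C_j = \{\tilde\varphi_j = \psi_j\}$ the operator $H_k$ evaluated on the smooth obstacle $\psi_j$ is strictly positive, and on the non-contact set $\tilde\varphi_j$ becomes a viscosity solution of $H_k[v] = c_j > 0$, so that Lemma 3.2 yields interior smoothness there.

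By the obstacle-problem regularity established in Appendix 1, $\tilde\varphi_j$ is globally $C^{1,1}$, smooth in the interior of $C_j$ (inheriting smoothness of $\psi_j$), and smooth in $B_R \setminus C_j$ (by Lemma 3.2 applied to $H_k[v] = c_j$). It remains to smooth $\tilde\varphi_j$ across the free boundary $\partial C_j$, and this is precisely the setting of Lemma 2.4: locally $\tilde\varphi_j = \max\{\psi_j, w_j\}$ for the smooth non-contact solution $w_j$, the two agree on the smooth hypersurface $\partial C_j$ with distinct normal derivatives (since $w_j$ detaches from the obstacle), and $H_k$ is strictly positive on either side. Lemma 2.4 then produces a smooth $H_k$-subharmonic function equal to $\tilde\varphi_j$ outside a small tubular neighbourhood of $\partial C_j$; a diagonal argument in $j$ and in the mollification parameter yields the desired smooth $H_k$-subharmonic sequence converging to $u$ on $B_R$.

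The main obstacle is the free-boundary analysis deferred to Appendix 1: showing that $\tilde\varphi_j$ is smooth on each side of $\partial C_j$ with distinct normal derivatives, so that Lemma 2.4 can be applied. Without the quadratic perturbation this step would fail, since solutions of $H_k = 0$ for $1 < k < n$ are in general only Lipschitz continuous and a purely zero right-hand side would leave the non-contact piece irregular; the perturbation $\epsilon_j(|x-x_0|^2 - R'^2)$ converts the non-contact equation into one with strictly positive right-hand side, bringing the classical $C^\infty$ theory of \cite{T1, T2} to bear and enabling the piecewise-smooth structure needed for Lemma 2.4.
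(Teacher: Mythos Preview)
Your proposal has a genuine gap at its core. The claim that ``on the non-contact set $\tilde\varphi_j$ becomes a viscosity solution of $H_k[v] = c_j > 0$'' is false: the envelope defined in (4.1) is the supremum of \emph{all} $H_k$-subharmonic competitors below the obstacle, and on the set where it detaches from $\psi_j$ it necessarily satisfies $H_k[\tilde\varphi_j]=0$ in the viscosity sense (otherwise a local Perron lifting would produce a strictly larger admissible competitor). Perturbing the \emph{obstacle} by a quadratic does nothing to the equation satisfied on the non-contact set. Consequently Lemma~3.2, which requires $f>0$, is unavailable, and you are left with only the Lipschitz regularity of Corollary~3.4 on the non-contact piece. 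This is exactly the obstruction the paper flags in the paragraph preceding the theorem: for $1<k<n$ there is no interior smoothness for $H_k[u]=0$, so one cannot hope to get a piecewise-smooth structure for $\tilde\varphi_j$ in this way.

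Several secondary claims also fail. Appendix~1 (Theorem~6.1) establishes only Lipschitz, not $C^{1,1}$, regularity of the obstacle envelope. The obstacle $\psi_j$ is not itself $H_k$-subharmonic (as the introduction emphasizes, adding a convex function need not preserve $H_k$-subharmonicity), so Lemma~2.4, which requires both pieces to be smooth $H_k$-subharmonic with $H_k>0$, does not apply to the pair $(\psi_j,w_j)$. And no regularity of the free boundary $\partial C_j$ is established anywhere in the paper. The paper's actual mechanism is entirely different: it bypasses the free boundary altogether by solving the \emph{penalized} Dirichlet problem (6.4), $\{H_k[u]\}^{1/k}=-\beta_\epsilon(\varphi_j-u)$ with $u=\varphi_j-\delta_j$ on $\partial B_R$. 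These solutions $u^\epsilon_j$ are smooth by Ivochkina's estimates and Evans--Krylov, satisfy $H_k[u^\epsilon_j]>0$ everywhere, and the boundary gap $\delta_j$ is what yields gradient bounds uniform in $\epsilon$. The smooth approximants are the penalized solutions themselves, not a smoothing of the envelope.
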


\begin{proof}
We may assume that $u$ is continuous.
Indeed, by the property of Perron lifting in last section,
we may use the techniques as Theorem 5.1 in \cite{DTW}
to construct a sequence of piecewise Lipchitz, $H_k$-subharmonic functions
to approximate it.

To obtain the smooth approximation, we first choose
a sequence $\{\varphi_j\}$ of smooth functions in $\Omega$
such that $\varphi_j\searrow u$ and $\varphi_j>u$.
Let $\tilde \varphi_j$ be the $H_k$-subharmonic envelope of $\varphi_j$ in $B_R$,
as defined above.
Then $\varphi_j\geq \tilde \varphi_j\geq u$.
Hence, $\tilde\varphi_j$ converges to $u$ as $j\to \infty$.

But the function $\tilde\varphi_j$ is not smooth.
To obtain a smooth approximation,
we choose a sequence $\delta_j>0$, converging to $0$ as $j\to\infty$,
and assume that $\varphi_j\geq u+\delta_j$. Now we consider the obstacle problem
\beq
\varphi_j^{\delta_j}=\sup_{v\in S_{\varphi_j, \delta_j}} v .
\eeq
where
$$S_{\varphi_j, \delta_j}=\{v\in \mathcal {SH}_k(\Omega)\cap C^0(B_R)\ |\
\ \text{$v\leq \varphi_j$ in $B_R$ and $v\leq \varphi_j-\delta_j$ on $\partial B_R$}\}.$$
We will show that (Theorem 6.1 in the Appendix 1)
$\varphi_j^{\delta_j}$ is a Lipchitz $H_k$-subharmonic smooth function.
Furthermore, for any  $j$,
there exists a sequence of smooth $H_k$-subharmonic  functions
$u_j^\epsilon$, which converges to $\tilde \varphi_j^{\delta_j}$ as $\epsilon \to 0$.
Since $u\in S_{\varphi_j, \delta_j}$, we have
$\tilde \varphi_j\geq\tilde \varphi_j^{\delta_j}\geq u$.
Hence, $u_j^\epsilon$ converges to $u$ as $\epsilon \to 0$, $j\to \infty$.
We obtain the smooth approximation.
\end{proof}

\begin{rem}
Note that the function $u$ in Theorem 4.1
satisfies $H_k[u]\ge 0$ in the viscosity sense.
Namely we allow that $u$ is degenerate in the sense that $H_k[u]=0$ at some points.
But by our proof of Theorem 6.1,
the smooth function $u_j$ obtained in Theorem 4.1 satisfies $H_k[u_j]>0$ in $\Om$.
This important property enables us to assume that the sequence $\{u_j\}$
in Theorem 1.1 are smooth and  $H_k[u_j]>0$. In other words, by Theorem 4.1,
it suffices to prove Theorem 1.1 (ii) for any sequence of {\it smooth}, bounded,
strictly $H_k$-subharmonic functions $\{u_j\}$ satisfying $H_k[u_j]>0$.
\end{rem}

\section{Weak continuity}


In this section, we prove Theorem 1.1.
First we prove

\begin{lem}
Let $u_j$ be a sequence of $H_1$-subharmonic functions
which converges to an $H_1$-subharmonic function $u$ a.e..
Then for any $\eps>0, \delta>0$, and any subset
$\Om'\subset\subset \Om$,
there exists $J>1$ such that when $j>J$, we have
$$u_j(x)\le u_\delta(x)+\eps\ \ \ \forall\ x\in\Om', $$
where
$$u_\delta(x)=\sup\{u(y)\ |\ |y-x|<\delta\}. $$
\end{lem}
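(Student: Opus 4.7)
The plan is a Hartogs-type argument, with the classical sub-mean value inequality replaced by comparison against $H_1$-harmonic functions on small balls. I would argue by contradiction: if the conclusion fails, then after passing to a subsequence there exist points $x_j\in\Om'$ with $x_j\to x_0\in\overline{\Om'}$ and $u_j(x_j)\ge u_\delta(x_j)+\eps$. The first preliminary step is to show that $\{u_j\}$ is uniformly bounded above on a neighborhood of $\overline{\Om'}$ for $j$ large: otherwise, comparing $u_j$ against an $H_1$-harmonic barrier on a small ball (via the $H_1$-comparison principle) would force $u_j$ to be large on a set of positive measure, contradicting a.e.\ convergence to the bounded, upper semicontinuous limit $u$. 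With this uniform upper bound, dominated convergence upgrades the a.e.\ convergence to $u_j\to u$ in $L^1_{\text{loc}}(\Om)$.

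The core comparison step is as follows. Fix $r$ slightly smaller than $\delta$ such that $u_j\to u$ in $L^1(\partial B_r(x_0))$ (possible for a.e.\ such $r$, by Fubini). For each $j$, invoke Corollary~3.4 on $B_r(x_j)$ to obtain the unique Lipschitz $H_1$-harmonic function $h_j$ with boundary values a $C^{1,1}$ upper regularization $\varphi_j$ of $u_j|_{\partial B_r(x_j)}$, chosen so that $\varphi_j$ converges in $L^1$ to an upper regularization $\psi$ of $u|_{\partial B_r(x_0)}$ (the small shift from $x_j$ to $x_0$ is harmless for $j$ large). The $H_1$-comparison principle yields
\beq
u_j(x_j)\le h_j(x_j).
\eeq
Stability of the Dirichlet problem under boundary-data convergence, together with the maximum principle for $H_1[h]=0$, gives $h_j(x_j)\to h(x_0)\le \sup_{\partial B_r(x_0)}\psi\le u_\delta(x_0)+\eps/2$. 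Combined with the upper semicontinuity of $u_\delta$, which gives $u_\delta(x_0)\le u_\delta(x_j)+\eps/4$ for $j$ large, this contradicts $u_j(x_j)\ge u_\delta(x_j)+\eps$.

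The main obstacle I expect is reconciling the $L^1$-only convergence of boundary data on spheres with the need for a stable, Lipschitz solution of the Dirichlet problem; Corollary~3.4 is designed exactly to furnish such solutions with quantitative dependence on $C^{1,1}$ boundary data, and I would use an upper $C^{1,1}$-regularization of $u_j|_{\partial B_r(x_j)}$ (by standard sup-convolution/mollification from above) to make the argument work. A secondary subtlety is the drift of centers $x_j\to x_0$, which is handled by fixing the comparison ball at $x_0$ with slightly enlarged radius and using $B_{\delta}(x_j)\subset B_{\delta+o(1)}(x_0)$; alternatively, a finite-cover/compactness upgrade of the pointwise limsup bound $\limsup_j u_j(x)\le u_\delta(x)$ (proved first at each $x$) would produce the stated uniform estimate over $\Om'$.
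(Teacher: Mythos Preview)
Your approach via Dirichlet comparison is substantially different from the paper's, and the step you yourself flag as the main obstacle is a genuine gap. The paper bypasses the Dirichlet problem entirely and invokes the weak Harnack inequality (local maximum principle) for $H_1$-subsolutions, quoted as Corollary~2.1 of \cite{DTW}: after normalizing so that $u_\delta(x_0)=0$ and replacing $u_j$ by $\max(u_j,0)$, one has
\[
u_j(x_0)\le C\,\delta^{-n/p}\Big(\int_{B_\delta(x_0)}u_j^p\Big)^{1/p},
\]
and the right-hand side tends to zero by dominated convergence (the paper tacitly uses, as you do, that the $u_j$ are uniformly bounded). Your contradiction setup with drifting centers $x_j\to x_0$ and the preliminary uniform upper bound are fine and actually make the passage from a pointwise to a uniform statement cleaner than in the paper, but the core estimate is a one-line application of this Harnack inequality.

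Your route instead requires stability of the minimal-surface Dirichlet problem under $L^1$ convergence of boundary data, and the $C^{1,1}$ upper-regularization device does not supply it. Concretely: any $\varphi_j\ge u_j|_{\partial B_r}$ inherits whatever spikes $u_j$ has on the sphere, so there is no reason for $\sup_{\partial B_r}\varphi_j$ to be close to $\sup_{\partial B_r}u$, and hence no direct maximum-principle bound on $h_j(x_j)$. Interior gradient estimates give subsequential convergence $h_j\to\tilde h$ locally uniformly, but with only $L^1$ control on $\varphi_j$ you cannot identify the boundary values of $\tilde h$ with $\psi$, so the inequality $\tilde h(x_0)\le\sup_{\partial B_r}\psi$ is unjustified. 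Establishing the stability you need amounts to proving an interior bound on $h_j$ in terms of an integral norm of its boundary data, which is essentially the weak Harnack inequality you are trying to circumvent. You should therefore invoke the weak Harnack directly rather than route through Corollary~3.4.
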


\begin{proof} For any given $x_0\in\Om'$,
by subtracting a constant we assume that  $u_\delta(x_0)=0$.
Replacing $u_j$ by $\max(u_j, 0)$ we may assume that
$u_j\ge 0$ in $B_\delta(x_0)$.
By the Harnack inequality (see Corollary 2.1 in \cite {DTW}), we have
$$u_j(x_0)\le C\delta^{-n/p} \Big(\int_{B_\delta} u_j^p \Big)^{1/p}. $$
But since $u_j$ are uniformly bounded and $u_j\to u$ a.e.,
the right hand side converges to $0$.
Hence $u_j(x_0)\le \eps$ when $j$ is sufficiently large.
\end{proof}

To prove Theorem 1.1, by Remark 4.2 it suffices to
consider a sequence $\{u_j\}$ which are smooth and satisfy $H_k[u_j]>0$.
When no confusion arises,
we also use the density $H_k[u]$ to denote the measure
$\mu_k[u]=H_k[u]\,dx$ for $u\in\mathcal{SH}_k\cap C^2$.

To prove Theorem 1.1, it also suffices to prove part (ii) of it.
Therefore we need only to prove

\begin{lem}
Let $u_j\in C^2(\Omega)$ be a sequence
of  uniformly bounded, $H_k$-subharmonic functions.
Suppose $u_j$ converges to a strictly $H_k$-subharmonic function
$u\in\mathcal{SH}_k(\Omega)$ a.e..
Then $H_k[u_j]$ converges to a measure $\mu$ weakly.
\end{lem}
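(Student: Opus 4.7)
The plan is twofold: first establish local mass bounds for $\{H_k[u_j]\}$ to extract a weak limit by compactness, then show that this limit is uniquely determined by $u$. For the mass bound, I would integrate the divergence identity (2.4) against a smooth cutoff; invoking Remark~4.2 to assume $u_j$ is smooth with $H_k[u_j]>0$ and using the interior Lipschitz estimate of \cite{K,W1} (as already exploited in Lemma~3.5), the boundary term is controlled and yields $\int_K H_k[u_j]\,dx\le C(K)$ uniformly. Weak-$*$ compactness then supplies subsequential weak limits. What remains is uniqueness, which amounts to showing that for any two sequences $\{u_j\},\{v_j\}$ of smooth $H_k$-subharmonic functions with $H_k>0$ converging a.e.\ to $u$, and for every $\phi\in C_0^\infty(\Om)$,
\beq
\int_\Om \phi\bigl(H_k[u_j]-H_k[v_j]\bigr)\,dx\longrightarrow 0.
\eeq
A partition of unity reduces this to $\phi$ supported in a single ball $B_r(x_0)\Subset B_{r+t}(x_0)\Subset\Om$.

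Next I would apply the Perron lifting of Section~3 in the open annulus $A=B_{r+t}\setminus\overline{B_r}$, producing $\tilde u_j=u_j^{A}$, $\tilde v_j=v_j^{A}$, and $\tilde u=u^{A}$. By construction these agree with the originals on $B_r$ (so the integrals on $\mathrm{supp}\,\phi$ are unchanged) and on $\Om\setminus\overline{B_{r+t}}$, and Lemma~3.5 makes them locally Lipschitz in $A$. An annular analogue of Lemma~3.6, combined with the Harnack-type upper bound of Lemma~5.1 and Arzel\`a--Ascoli, gives $\tilde u_j,\tilde v_j\to\tilde u$ locally uniformly in $A$ for generic radii; in particular, for a.e.\ $\rho\in(r,r+t)$ the traces and tangential gradients of $\tilde u_j$ and $\tilde v_j$ on $\partial B_\rho$ become uniformly close as $j\to\infty$.

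Fixing such a $\rho$, the decisive step is to exploit the strict $H_k$-subharmonicity of $u$ to perform a small perturbation in the annulus, producing a smooth $H_k$-subharmonic $\hat u_j$ that agrees with $\tilde v_j$ on $\partial B_\rho$ and satisfies the strict normal-derivative inequality $(\hat u_j)_\gamma>(\tilde v_j)_\gamma$ there. After mollifying the piecewise smooth data across the interface between the perturbed and unperturbed regions (Lemma~2.4), the monotonicity Lemma~2.3 applies and yields $\int_{B_\rho}H_k[\hat u_j]\ge\int_{B_\rho}H_k[\tilde v_j]$. Since $\hat u_j$ coincides with $\tilde u_j=u_j$ on $\mathrm{supp}\,\phi\subset B_r$ and the modification in the annulus can be chosen to contribute a vanishing amount to the integrals, interchanging the roles of $u_j$ and $v_j$ then produces the reverse inequality in the limit and completes the proof. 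The main obstacle is this perturbation step: since $\mathcal{SH}_k$ is not convex for $1<k<n$, a generic small modification of $\tilde u_j$ destroys $H_k$-subharmonicity, and the strict $H_k$-subharmonicity hypothesis on $u$ provides precisely the slack needed to carry out a controlled modification that stays inside $\mathcal{SH}_k$ while achieving the strict boundary-gradient inequality required by Lemma~2.3.
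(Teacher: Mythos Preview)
Your overall architecture---a local mass bound followed by uniqueness via Perron lifting and the monotonicity lemma---matches the paper's, but several of the steps you propose do not go through as written.

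For the mass bound, invoking the interior gradient estimate of \cite{K,W1} for the original sequence $u_j$ is illegitimate: that estimate is for \emph{solutions} of $H_k[u]=f$ with controlled $f$, whereas here $H_k[u_j]$ may be arbitrarily large. Integrating the divergence form (2.4) against a cutoff leaves an integrand containing $H^{ij}$, which involves $D^2u_j$ and is not a priori bounded. The paper instead compares $u_j$ with a steep cone $\phi(x)=\lambda(|x-x_0|-r)$ via Lemma~2.3, obtaining $\int_{B_{r-\delta}}H_k[u_j]\le\int_{B_{r+\delta}}H_k[\phi]$ directly.

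For the uniqueness step, the paper explicitly warns (just before (5.3)) that Perron lifting \emph{directly} in an annulus does not yield the needed regularity: the inner sphere of $B_{r+t}\setminus\overline{B_r}$ fails the $k$-convexity hypothesis of Lemmas~3.1--3.2. This is why the paper covers each thin annular shell by finitely many small balls and lifts iteratively in those balls, smoothing at each interface with Lemma~2.4.

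The most serious gap is your ``decisive step.'' You propose to perturb $\tilde u_j$ within $\mathcal{SH}_k$ so that it \emph{coincides} with $\tilde v_j$ on a fixed sphere $\partial B_\rho$ while achieving $(\hat u_j)_\gamma>(\tilde v_j)_\gamma$ there. This is precisely the manoeuvre that Remark~5.4 identifies as unworkable: forcing agreement on a prescribed sphere (e.g.\ via a Dirichlet problem like (5.9)) does not come with control of the normal gradient on that sphere. The paper's workaround is different and asymmetric. It performs the ball-by-ball lifting in \emph{two} separate shells, near $\partial B_{r+t/4}$ (applied to both $u_j$ and $v_j$) and near $\partial B_{r+3t/4}$ (applied only to $\hat u_j$ and $\hat u$). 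Uniform convergence in the first shell gives, after subtracting a small constant, $\hat u_j<\hat v_j$ on $\partial B_{r+t/4}$. In the second shell, strict $H_k$-subharmonicity of $u$ is used not to build a perturbation but merely to guarantee $\tilde u>u+\eps$; combined with Lemma~5.1 this yields $\tilde u_j>v_j=\hat v_j$ near $\partial B_{r+3t/4}$. Hence $\tilde u_j-\hat v_j$ changes sign between the two shells, producing a \emph{floating} level set $G_j$ with $B_{r+t/4}\subset G_j\subset B_{r+3t/4}$, $\tilde u_j=\hat v_j$ on $\partial G_j$, and (after a Sard adjustment) $D\tilde u_j\ne D\hat v_j$ there. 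Lemma~2.3 is then applied on $G_j$, not on a prescribed ball, giving
\[
\int_{B_r}H_k[v_j]\le\int_{G_j}H_k[\hat v_j]\le\int_{G_j}H_k[\tilde u_j]\le\int_{B_{r+t}}H_k[u_j],
\]
which is $\nu(B_r)\le\mu(B_{r+t})$; the reverse inequality follows by symmetry.
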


\begin{proof}
For any ball $B_r(x_0)\subset\subset \Om$,
let $\phi(x)=\lambda (|x-x_0|-r)$, which is the representative function of a convex cone. We choose $\lambda$
sufficiently large such that $\phi<u$ in $B_{r-\delta}(x_0)$ and
$\phi>u$ on $\p B_{r+\delta}(x_0)$,
where $\delta=\lambda^{-1}\sup_j\sup_\Om |u_j|$.
By Lemma 2.3,  we have
$$\int_{B_{r-\delta}(x_0)} H_k[u_j]\le
 \int_{B_{r+\delta}(x_0)} H_k[\max(u_j, \phi)]=\int_{B_{r+\delta}(x_0)} H_k[\phi],$$
which is uniformly bounded. This means that $\int_{\Omega'}H_k[u_j]$ are uniformly bounded
for $\Omega'\Subset\Omega$.
Hence, according to \cite{A} there is a subsequence of $H_k[u_j]$ which converges to
a measure $\mu$ weakly.

To prove that the convergence is independent of the subsequences,
we consider two sequences
$\{u_j\}$, $\{v_j\}$ in $\mathcal{SH}_k(\Omega)\cap C^2(\Omega)$.
Suppose both of them converge to $u$ a.e. in $\Omega$ and
\beq
H_k[u_j]\longrightarrow \mu, \ \ H_k[v_j]\longrightarrow \nu.
\eeq
Then to prove $\mu=\nu$,
it suffices to prove that for any ball $B_r=B_r(x_0)\subset\subset\Omega$
and any $t>0$,
\beq
\mu(B_{r})\leq \nu(B_{r+t}),\ \ \nu(B_{r})\leq \mu(B_{r+t}).
\eeq

Our strategy of proving (5.2) is to use the Perron lifting in the annuli
$N_{t/8}(\p B_{r+t/4})$ and $N_{t/8}(\p B_{r+3t/4})$
and use the monotonicity formula (2.9),
where $N_\delta$ denotes the $\delta$-neighbourhood.
As can be seen in Lemma 3.1 and 3.2, the $H_k$-Perron lifting directly in an annuli domain
can not guarantee the regularity. We will use the Perron lifting in a modified way
by liftings in a finite sequence of finite covering balls. We also use Lemma 2.4 to get the smoothness near
the boundaries of these balls, so that the Perron lifting of $u_j$ and $v_j$ are smooth.
Details is as follows.

First we choose finitely many balls $\{\hat B_{r_\alpha}\}_{\alpha=1}^m$
of radius $r_\alpha\approx \frac {t}{8}$ and
center $x_\alpha$ on $\partial B_{r+{t}/{4}}$, such that
$$
 N_{t/16}(\p B_{r+t/4}) \subset {\bigcup}_{\alpha=1}^{m} \hat B_{r_\alpha}
         \subset N_{t/8}(\p B_{r+t/4}).
$$
Denote $\hat u_{j,0}=u_j$.
Let $\hat u'_{j, 1}$ be the solution to
\beq
\begin{cases}
    H_k[u] =\sigma_{j,1} &  \ \ \text{in}\  \hat B_{r_1}, \\
     u=\hat u_{j,0} & \ \ \text{in}\  \Omega\setminus \hat B_{r_1},
\end{cases}
\eeq
where $\sigma_{j,1}\leq \inf H_k[\hat u_{j,0}]$ is a constant sufficiently small
but positive,  such that the Dirichlet problem (5.3) is solvable.
Then $\hat u'_{j, 1}$ is smooth in $\hat B_{r_1}$ and
in $B_{r+t}\setminus \hat B_{r_1}$, up to the boundary.
By Lemma 2.4 we may modify
$\hat u'_{j,1}$ slightly near $\p B_{r_1}$ to get a smooth $H_k$-subharmonic function
$\hat u_{j,1}$ in $\Om$.

We define $\hat u_{j, \alpha}$ inductively for $\alpha=2, 3, \cdots, m$,
such that $\hat u_{j, \alpha}\in \mathcal{SH}_k(\Om)$
is a modification of $\hat u'_{j,\alpha}$ near $\p \hat B_{r_\alpha}$,
where $\hat u'_{j,\alpha}$ is the solution to
\beq
\begin{cases}
    H_k[u] =\sigma_{j,\alpha} &  \ \ \text{in}\   \hat B_{r_\alpha} , \\
     u=\hat u_{j,\alpha-1} & \ \ \text{in}\   \Omega\setminus \hat B_{r_\alpha}.
\end{cases}
\eeq
for some sufficiently small positive constant
$\sigma_{j,\alpha}<\inf H_k[\hat u_{j,\alpha-1}]$.

Denote $\hat u_j=\hat u_{j,m}$. Similarly, we obtain $\hat v_j$.
Both $\hat u_j$ and $\hat v_j$ are smooth and $H_k$-subharmonic.
But note that the function $u$ is not smooth and $H_k[u]$ may vanish at some points.
So we simply let $\hat u_{0}=u$ and
for $\alpha=1,2,\cdots, m$, let $\hat u_{\alpha}$ be the solution to
\beq
\begin{cases}
    H_k[u] =0 &  \  \ \text{in}\  \hat B_{r_\alpha} , \\
     u=\hat u_{\alpha-1} & \ \ \text{in}\  \Omega\setminus \hat B_{r_\alpha}.
\end{cases}
\eeq
and denote $\hat u=\hat u_{m}$.

In conclusion, the almost everywhere convergent sequences
$\{u_j\}$, $\{v_j\}$ become uniformly convergent sequences $\{\hat u_j\}$, $\{\hat v_j\}$
near $\p B_{r+t/4}$ after the above process.
In fact, by Lemma 3.6, we may choose the radii $r_\alpha\approx\frac t8$ and choose
$\sigma_{j, \alpha}$ sufficiently small such that
$\hat u_j\to \hat u$ and $\hat v_j\to \hat u$
uniformly in $N_{t/16}(\p B_{r+t/4})$.
Moreover, by the interior gradient estimate for the $k$-curvature equation \cite{K, W1},
$\hat u_j$, $\hat v_j$ and $\hat u$ are locally uniformly Lipchitz continuous
in $\bigcup_{\alpha=1}^{m}\hat B_{r_\alpha}$.
Hence by subtracting a small constant we may assume that
\beq
\hat u_j<\hat v_j \ \ \ \ \text{on}\ \ \p B_{r+t/4}.
\eeq

Next we apply the above modified Perron lifting to the functions
$\hat u_j$ and $\hat u$ near $\p B_{r+3t/4}$.
That is we choose finitely many balls $\{\tilde B_{r_\beta}\}_{\beta=1}^{m'}$
of radius $r_\beta\approx \frac {t}{8}$ and
center $x_\beta$ on $\partial B_{r+{3t}/{4}}$, such that
$$N_{t/16}(\p B_{r+3t/4})
\subset {\bigcup}_{\beta=1}^{m'} \tilde B_{r_\beta} \subset N_{t/8}(\p B_{r+3t/4}). $$
Let $\tilde u_{j, 0}=\hat u_j$. For $\beta=1, 2, 3, \cdots, m'$, let
$\tilde u'_{j,\beta}$ be the solution to
\beqs
\begin{cases}
    H_k[u] =\sigma_{j,\beta} &  \ \ \text{in}\   \tilde B_{r_\beta} , \\
     u=\tilde u_{j,\beta-1} & \ \ \text{in}\  \Omega\setminus \tilde B_{r_\beta};
\end{cases}
\eeqs
and let $\tilde u_{j, \beta}\in \mathcal{SH}_k(\Om)$
be a modification of $\tilde u'_{j,\beta}$ near $\p B_{r_\beta}$
such that it is smooth and $H_k$-subharmonic.
Denote $\tilde u_j=\tilde u_{j, m'}$.
Similarly let $\tilde u_{0}=\hat u$ and
for $\beta=1,2,\cdots, m'$, let $\tilde u_\beta$ be the solution to
\beqs
\begin{cases}
    H_k[u] =0 &  \  \hat B_{r_\beta} , \\
     u=\tilde u_{\beta-1} & \ \Omega\setminus \hat B_{r_\beta}.
\end{cases}
\eeqs
and denote $\tilde u=\tilde u_{m'}$.

In the above we obtained by the Perron lifting the new sequences
$\tilde u_j, \hat u_j, \hat v_j$, and the modified function $\hat u, \tilde u$
with the properties
$${\begin{aligned}
 &\tilde u_j=\hat u_j=u_j,\\
 &\tilde u =\hat u=u,\\
 &\hat v_j=v_j
 \end{aligned} }
$$
near $\p B_r\cup\p B_{r+t/2}\cup \p B_{r+t}$.
By assumption, $u$ is strictly $H_k$-subharmonic. Hence there exists $\eps>0$ such that
$\tilde u> u+\eps$ near $\p B_{r+3t/4}$. By Lemma 3.6,
$\tilde u_j\to \tilde u$ near $\p B_{r+3t/4}$.
By Lemma 5.1, we then have
\beq
\tilde u_j>\hat v_j=v_j\ \ \ \text{near} \ \ \p B_{r+3t/4}.
\eeq

By (5.6) and (5.7), there exist set $G_j$ with
\beq
B_{r+t/4}\subset G_j\subset B_{r+3t/4}
\eeq
such that $$\tilde u_j(x)=\hat v_j(x)$$ for $x\in\partial G_j$ and
$$\tilde u_j(x)< \hat v_j(x)$$
for $x\in G_j$ near $\p G_j$. By Sard's lemma,
we may also assume by subtracting a small constant to $\tilde u_j$
that $D\tilde u_j\ne D\hat v_j$ on $\p G_j$\footnote{Consider the set $G_\delta=\{\tilde u_j<\hat v_j+\delta\}$. By Sard's lemma, the set is smooth for almost all $\delta>0$. We can choose a smooth $G_\delta$ for some small $\delta$ and hence $D\tilde u_j\ne D\hat v_j$ on $\p G_\delta$.}.
Therefore by the monotonicity formula (2.9), we have
$${ \begin{aligned}
 \int_{B_r} H_k[v_j]
   & =  \int_{B_r}  H_k[\hat v_j]
 \le \int_{G_j}  H_k[\hat v_j]\\
 &  \le \int_{G_j}  H_k[\tilde u_j]
 \le \int_{B_{r+t}}  H_k[\tilde u_j]
 = \int_{B_{r+t}}  H_k[u_j].
 \end{aligned} }$$
Taking limit we obtain
$\nu(B_{r})\leq \mu(B_{r+t})$.
By exchanging $u$ and $v$ we also have
$\mu(B_{r})\leq \nu(B_{r+t})$.
\end{proof}

\begin{rem}
We point out that even if the sequence $\{u_j\}$
are locally uniformly Lipschitz continuous,
we cannot remove the strict $H_k$-subharmonicity condition,
because we cannot modify the sequence $u_j$ and $v_j$
to obtain (5.6) and (5.7) simultaneously.
\end{rem}

\begin{rem}
The inequalities (5.2) is built upon the monotonicity formula (2.8)
and we need the modified functions of $u_j$ and $v_j$
satisfies (5.6) and (5.7).
For the $k$-Hessian equation or the $p$-Laplace equations,
to obtain an inequality like (5.7),
one may simply add the function
$w_\eps:=\eps[\max(0, |x|-r-\frac t4)]^2$ to $u_j$.
However for the $k$-curvature equation,
$u_j+w_\eps$ is not $H_k$-subharmonic in general.

One may solve the  Dirichlet problem
\beq
\begin{cases}
    H_k[u] =0 &  \  \ \text{in}\  B_{r+3t/4}\setminus B_{r+t/4} , \\
     u=\tilde u_{j,\beta} +\eps & \ \ \text{on}\  \p B_{r+3t/4},\\
     u=\tilde u_{j,\beta}   & \ \ \text{on}\  \p B_{r+t/4}.
\end{cases}
\eeq
and send $\eps\to 0$. However we cannot prove the convergence of
the gradient of the solution on $\p B_{r+t/4}$.
For these reasons we have to assume that $u$ is strictly $H_k$-subharmonic.
\end{rem}


\section{Appendix 1: An obstacle problem}

Let $\Omega$ be a smooth, bounded domain in $\mathbb R^n$
with $H_k[\partial\Omega]>0$ and let
$\varphi\in C^{\infty}(\overline\Omega)$. Suppose $\delta>0$ is a small positive constant.
In this appendix, we consider greatest viscosity subsolution to the obstacle problem
\beq\begin{cases}
H_k[u]\geq  0  & \text{\ in $\Omega$},\\
u\leq \varphi &\text{\ in $\Omega$},\\
u\leq \varphi-\delta &\text{\ on $\partial\Omega$},
\end{cases}\eeq
i.e.,
\beq
\tilde \varphi^\delta=\sup_{u\in S_{\varphi,\delta}} u,
\eeq
where $$S_{\varphi,\delta}=\{u\in C(\Omega) \ |\  H_k[u]\geq  0,\ u\leq \varphi, \ u|_{\partial\Omega}\leq \varphi-\delta \}.$$
It is clear that $S_{\varphi,\delta}$ is nonempty, so $\tilde \varphi^\delta$ is well defined.

\begin{theo}
The function $\tilde \varphi^\delta$ is Lipchitz continuous on $\overline\Omega$, and satisfies
\beq\begin{cases}
H_k[\tilde \varphi^\delta]\geq  0  & \text{\ in $\Omega$},\\
\tilde \varphi^\delta\leq \varphi &\text{\ in $\Omega$},\\
\tilde \varphi^\delta= \varphi-\delta &\text{\ on $\partial\Omega$}.
\end{cases}\eeq

Furthermore, $\tilde \varphi^\delta$ can be approximated by a sequence of smooth $H_k$-subharmonic functions
$u^\epsilon$, which are solutions to the following perturbation problem:
\begin{equation}
\begin{cases}
\{H_k[u]\}^{\frac{1}{k}}=-\beta_\epsilon(\varphi-u)  & \text{\ \  in $\Omega$}, \\[5pt]
    u=\varphi-\delta  & \text{\ \ on $\partial \Omega$,}
\end{cases}
\end{equation}
where $\beta_\epsilon(t)$ is a family of smooth functions ($\epsilon>0$)
such that
\begin{eqnarray*}
&&\beta_\epsilon(0)=-1,\ \beta_\epsilon(t)< 0,\\
&&\beta_\epsilon(t)\to -\infty,\ t<0,\ \epsilon\to 0,\\
&&\beta_\epsilon(t)\to 0,\ t>0,\ \epsilon\to 0,\\
&&\beta'_\epsilon(t)\geq 0,\ \beta''_\epsilon(t)\leq 0.
\end{eqnarray*}
\end{theo}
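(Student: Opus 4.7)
The strategy is to construct $\tilde\varphi^\delta$ as a subsequential limit of the smooth penalized solutions $u^\epsilon$ to (6.4); once this is done, all three assertions of the theorem follow from the limiting properties. For each fixed $\epsilon>0$, I would solve (6.4) by the continuity method. The zero-order term $\beta_\epsilon(\varphi-u)$ is non-decreasing in $u$ (since $\beta'_\epsilon\ge 0$), so the equation is proper and comparison holds for the linearization; together with the smoothness of $\varphi$, the hypothesis $H_k[\p\Omega]>0$, and the solvability conditions of Lemmas 3.1--3.2, this produces a smooth $H_k$-subharmonic solution $u^\epsilon\in C^\infty(\Omega)\cap C^{0,1}(\bar\Omega)$.

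The second task is to derive estimates that are uniform in $\epsilon$. The key a priori bound is the obstacle inequality $\sup_\Omega(u^\epsilon-\varphi)\to 0$ as $\epsilon\to 0$. If $u^\epsilon-\varphi$ attains a positive interior maximum at $x_0$, then $Du^\epsilon(x_0)=D\varphi(x_0)$ and $D^2u^\epsilon(x_0)\le D^2\varphi(x_0)$; since the matrix $b^{ij}$ in the decomposition $a_{ij}=w^{-1}u_{ip}b^{pq}b^{qj}$ depends only on $Du$, this forces $A(u^\epsilon)(x_0)\le A(\varphi)(x_0)$ as symmetric matrices, and the monotonicity of $\sigma_k$ on $\bar\Gamma_k$ (where $A(u^\epsilon)$ lies by $H_k$-admissibility) yields
\[
H_k[\varphi](x_0)\ge H_k[u^\epsilon](x_0)=\bigl(-\beta_\epsilon(\varphi-u^\epsilon)(x_0)\bigr)^k.
\]
Since $\beta_\epsilon(t)\to-\infty$ for each fixed $t<0$ while $\max_{\bar\Omega}H_k[\varphi]$ is independent of $\epsilon$, this forces $\sup_\Omega(u^\epsilon-\varphi)\to 0$. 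Where $u^\epsilon\le\varphi$ we have $-\beta_\epsilon(\varphi-u^\epsilon)\le -\beta_\epsilon(0)=1$, so $\|H_k[u^\epsilon]\|_{L^\infty(\Omega)}\le 1$ uniformly, and the interior gradient estimate for $k$-curvature equations \cite{K, W1} together with the Dirichlet data provides uniform local Lipschitz bounds on $\{u^\epsilon\}$.

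By Arzel\`a--Ascoli, along a subsequence $u^\epsilon\to u^*$ locally uniformly in $\Omega$, with $u^*$ Lipschitz. Stability of viscosity subsolutions gives $H_k[u^*]\ge 0$; a standard boundary barrier using $H_k[\p\Omega]>0$ preserves $u^*=\varphi-\delta$ on $\p\Omega$; and the previous paragraph gives $u^*\le\varphi$ in $\Omega$, so $u^*\in S_{\varphi,\delta}$ and hence $u^*\le\tilde\varphi^\delta$. For the reverse, on compact subsets of the open set $\{u^*<\varphi\}$ one has $-\beta_\epsilon(\varphi-u^\epsilon)\to 0$ uniformly, so $H_k[u^*]=0$ in the viscosity sense there; thus $u^*$ is a viscosity solution of the obstacle equation $\min\{H_k[w],\varphi-w\}=0$ with boundary data $\varphi-\delta$, and a comparison argument then shows $v\le u^*$ for every $v\in S_{\varphi,\delta}$. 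This identifies $u^*=\tilde\varphi^\delta$ and yields the three asserted properties, with the smooth approximation furnished by the sequence $\{u^\epsilon\}$ itself.

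The main obstacle will be the comparison step for the degenerate obstacle equation, which is delicate because of the lack of strict ellipticity on the coincidence set $\{u^*=\varphi\}$ and the non-convexity of $\mathcal{SH}_k(\Omega)$; I expect it will require perturbing either the competitor $v$ or the solution $u^*$ to restore strict inequality before invoking the classical viscosity comparison for the $k$-curvature operator. A secondary technical point is making the bound $\sup_\Omega(u^\epsilon-\varphi)\to 0$ quantitative using the specific profile of $\beta_\epsilon$ near $t=0$, which is needed to justify the locally uniform convergence on $\{u^*<\varphi\}$ that underpins the identification of the limit equation.
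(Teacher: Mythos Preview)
Your overall strategy---penalize, solve (6.4), pass to the limit, then identify the limit with $\tilde\varphi^\delta$ by comparison on the non-contact set---is exactly the paper's. The difference lies in where the real work is, and you have misidentified it.

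The genuine gap in your outline is the \emph{uniform-in-$\epsilon$} gradient bound. Citing the interior gradient estimate of \cite{K,W1} is not enough by itself: that estimate depends on the oscillation of $u^\epsilon$ over a ball, so you need a uniform \emph{lower} bound $\inf_\Omega u^\epsilon \ge -C$, which you never establish. Your argument at the interior maximum of $u^\epsilon-\varphi$ gives $\sup_\Omega(-\beta_\epsilon(\varphi-u^\epsilon))\le C_0$ (not $\le 1$; your statement ``where $u^\epsilon\le\varphi$'' does not cover the whole domain), hence a uniform upper bound on $H_k[u^\epsilon]$, but this alone does not prevent $u^\epsilon$ from being very negative. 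The paper supplies the missing step by comparing on $\{u^\epsilon\le\varphi-K\}$ with the solution $u_0$ of $H_k[u_0]=\sigma_0^k$, $u_0|_{\partial\Omega}=\varphi-\delta$, for a small fixed $\sigma_0$; on that set $-\beta_\epsilon(\varphi-u^\epsilon)\le -\beta_\epsilon(K)\le\sigma_0$ for $K$ large independent of $\epsilon$, and comparison yields $u^\epsilon\ge u_0-C_K$. For the boundary Lipschitz bound (needed since the theorem asserts Lipschitz on $\overline\Omega$, not just locally), the paper compares with the solution $v_0$ of $H_k[v_0]=0$, $v_0|_{\partial\Omega}=\varphi-\delta$, to produce a \emph{fixed} neighbourhood of $\partial\Omega$ on which $\varphi-u^\epsilon\ge\delta/2$ and hence $f\le -\beta_\epsilon(\delta/2)\to 0$; only then do Ivochkina's barriers apply with constants independent of $\epsilon$. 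Your phrase ``standard boundary barrier'' hides exactly this point. With the two-sided $L^\infty$ bound in hand, the paper then redoes the CNS global gradient argument with the auxiliary function $G=|\nabla u|\,e^{u}$, and the sign $\beta_\epsilon'\ge 0$ is used explicitly when the equation is differentiated; you should either reproduce that computation or verify carefully that the constants in \cite{K,W1} depend only on quantities you have already bounded.

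By contrast, the step you flag as the ``main obstacle''---comparison for the degenerate obstacle problem---is handled in the paper in two lines and requires no perturbation at all. Once you know $H_k[\bar u]=0$ in the viscosity sense on $G=\{\bar u<\varphi\}$, take any competitor $h\in S_{\varphi,\delta}$ with $h(x_0)>\bar u(x_0)$, set $G'=\{\bar u<h\}$; then $G'\subset G$ (since $h\le\varphi$), $\bar u=h$ on $\partial G'$, and $H_k[h]\ge 0=H_k[\bar u]$ in $G'$, so the ordinary viscosity comparison for the $k$-curvature operator gives $h\le\bar u$ in $G'$, a contradiction. No strict ellipticity on the contact set and no convexity of $\mathcal{SH}_k$ are needed, because the comparison takes place entirely inside the non-contact region.
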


An example of the function $\beta_\eps$
satisfying the above conditions is $\beta_\eps(t)=-e^{-\epsilon^{-1}t}$.
Theorem 6.1 is inspired by \cite{L},
which deals with an obstacle problem for Monge-Amp\`ere equation.

First let us recall the a priori estimates for the curvature equation \cite{I2}
\begin{equation}
\begin{cases}
   H_k[u]=f(x, u)  & \text{\ \  in $\Omega$}, \\[5pt]
    u=\varphi  & \text{\ \ on $\partial \Omega$}.
\end{cases}
\end{equation}

\begin{theo}{\cite{I2}}
Let $\Omega$ be a bounded domain in $R^n$ with $C^4$ boundary
and $\varphi\in C^4(\partial\Omega)$.
Assume $H_k[\partial\Omega]>0$,
$f\in C^2(\overline\Omega\times \mathbb R)$ and $f$ satisfies
\beq
f>0, \ \ \ \ \ \frac{\partial f}{\partial u}\geq 0
\eeq
and
\beq
f <  H_{k}[\partial\Omega]\ \ \ \text{on} \ \pom  .
\footnote{The condition was  given by $f < \frac{n-k}{n} H_{k}[\partial\Omega]$ in \cite{I2}
since the $k$-curvature was defined to be
$\left(\begin{array}{c}n \\ k\end{array}\right)^{-1}\sigma_k(\kappa)$ there, where
$\kappa=(\kappa_1,\cdot\cdot\cdot, \kappa_n)$ is the vector of
principal curvatures.}
\eeq
Let $u\in C^2(\overline\Omega)$ be a solution to (6.5).
Denote $M=\sup_\Omega |u|$, $\nu=\inf f(x, u)>0$.
Then there exists $C>0$ depending on
$M$, $\nu$, $\|\varphi\|_{C^2(\partial\Omega)}$,
$\|\partial\Omega\|_{C^4}$, $\|f\|_{C^2}$ and $H_{k}[\partial\Omega]-f|_{\partial\Omega}$, such that
\beq
\|u\|_{C^2(\overline\Omega)}\leq C.
\eeq
\end{theo}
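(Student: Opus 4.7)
The plan is to follow the classical strategy for fully nonlinear curvature equations: reduce $\|u\|_{C^2(\bom)}$ to successive $C^0$, $C^1$, and $C^2$ a priori bounds, exploiting the fact that $F:=H_k^{1/k}$ is concave and elliptic on the admissible cone $\Gamma_k$, so the matrix $F^{ij}=\partial F/\partial u_{ij}$ is positive definite and the linearisation $F^{ij}w_{ij}$ is uniformly elliptic once $|D^2u|$ is controlled. The $C^0$ bound $\sup_\Om|u|\le M$ is hypothesised, so the real work is a gradient estimate and a second-derivative estimate, with the boundary pieces being the delicate part.

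For the $C^1$ bound I would combine the interior gradient estimate of Korevaar \cite{K} and Wang \cite{W1}, which applies to every $H_k$-subharmonic function through the divergence form (2.4), with a boundary gradient estimate. The tangential component of $Du$ on $\pom$ is controlled by $\|\varphi\|_{C^1}$. To bound the normal derivative $u_\gamma$ on $\pom$ I would construct upper and lower barriers of the form $w=\varphi\pm(Ad-Bd^2)$, where $d$ is the signed distance to $\pom$. Here the hypothesis $f|_{\pom}<H_k[\pom]$ is precisely what forces $H_k[w]<f$ in a one-sided collar of $\pom$ for a suitable choice of $A,B$, and the strict gap $H_k[\pom]-f|_{\pom}$ enters quantitatively in the admissible range for those constants.

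The $C^2$ estimate is the heart of the argument. \emph{Interior:} differentiating $F[u]=f^{1/k}$ twice along a direction $\xi$ yields
\[
F^{ij}(u_{\xi\xi})_{ij}+F^{ij,kl}u_{ij\xi}u_{kl\xi}=(f^{1/k})_{\xi\xi},
\]
and concavity of $F$ on $\Gamma_k$ makes the second-order term nonpositive; the maximum principle applied to a test function $W=\max_{|\xi|=1}u_{\xi\xi}\,e^{\psi(|Du|^2)}e^{-\beta u}$ for suitable $\psi,\beta$ then gives an interior bound on $D^2u$ in terms of $M$, $\|Du\|_\infty$, $\|f\|_{C^2}$ and $\nu$. \emph{Boundary:} fixing $x_0\in\pom$ and taking normal coordinates with $\gamma(x_0)=e_n$, I would split $D^2u(x_0)$ into the tangent--tangent, tangent--normal, and normal--normal blocks. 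The tangent--tangent block is bounded by differentiating $u=\varphi$ twice along $\pom$, producing a bound in terms of $\|\varphi\|_{C^2}$ and the second fundamental form of $\pom$. The mixed block $u_{\tau n}$ is controlled by a standard barrier $\Phi=\pm(u-\varphi)_\tau+A(u-\varphi)+Bd-Cd^2$ with $A,B,C$ chosen so that $F^{ij}\Phi_{ij}$ has the correct sign in a one-sided neighbourhood of $x_0$.

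The main obstacle is the double-normal bound $u_{nn}(x_0)\le C$. Following Ivochkina \cite{I2} and Caffarelli--Nirenberg--Spruck \cite{CNS}, the argument exploits the structure already used in Lemma~2.2: in the coordinates above, $\partial H_k/\partial u_{nn}$ is independent of $u_{nn}$ and equals $\frac{1}{k!}\frac{\bar w^{k+1}}{w^{k+2}}\sigma_{k-1}(\bar A)$, where $\bar A$ depends only on the tangential second derivatives of $u$ and on $u_\gamma$. Writing $H_k[u]=\frac{\partial H_k}{\partial u_{nn}}u_{nn}+Q$ as in (2.24), if $u_{nn}(x_0)$ were arbitrarily large, the equation $H_k[u]=f$ would force $\sigma_{k-1}(\bar A)(x_0)$ to be arbitrarily small. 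But $u=\varphi$ on $\pom$ pins down $\bar A$ at $x_0$ in terms of $\varphi$, the second fundamental form of $\pom$, and the already bounded $u_\gamma$, and an explicit comparison using the identity expressing $H_k[\pom]$ in terms of the principal curvatures of $\pom$ shows $\sigma_{k-1}(\bar A)(x_0)$ is bounded below by a positive multiple of $H_k[\pom](x_0)-f(x_0)$. This contradiction yields the double-normal bound with the advertised dependence on $H_k[\pom]-f|_{\pom}$. Combining the three boundary pieces with the interior estimate and using the resulting uniform ellipticity closes the $C^2$ estimate (6.8).
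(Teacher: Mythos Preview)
The paper does not give its own proof of this statement: Theorem~6.2 is quoted directly from Ivochkina \cite{I2} and used as a black box in the subsequent existence argument (Theorems~6.3 and~6.1). So there is no ``paper's proof'' to compare against beyond the original source.

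Your sketch is essentially the strategy of \cite{I2} and \cite{CNS}: reduce the global $C^2$ bound to the boundary via concavity of $H_k^{1/k}$ on the admissible cone, control the tangent--tangent and tangent--normal second derivatives by differentiating the boundary condition and by barrier constructions, and then bound the double normal $u_{nn}$ by showing that the coefficient $\partial H_k/\partial u_{nn}$ in the decomposition (2.24) stays uniformly positive. Your use of Lemma~2.2 to identify that coefficient with $\sigma_{k-1}(\bar A)$ is exactly right. The one step you state a bit too glibly is the last one: the positive lower bound on $\sigma_{k-1}(\bar A)$ at a boundary point is not literally an inequality of the form $\sigma_{k-1}(\bar A)\ge c\,(H_k[\pom]-f)$; rather, one computes the tangential principal curvatures of the graph at a boundary point in terms of the second fundamental form of $\pom$, the boundary data $\varphi$, and the (already bounded) normal derivative $u_\gamma$, and the Serrin-type hypothesis $f<H_k[\pom]$ is what guarantees the resulting $\sigma_{k-1}$ cannot degenerate. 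With that step fleshed out, your outline matches the original argument and would carry through.
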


Applying Theorem 6.2  to equation (6.5) we have the following existence result.

\begin{theo}
For any small $\epsilon>0$, there exists a smooth $H_k$-subharmonic solution $u^{\epsilon}$ to Dirichlet problem (6.4).
\end{theo}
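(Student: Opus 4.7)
My plan is to derive Theorem 6.3 from Theorem 6.2 via the method of continuity, after verifying that setting $f(x,u) = [-\beta_\epsilon(\varphi-u)]^k$ puts (6.4) into the form (6.5). The structural conditions (6.6)--(6.7) are immediate: $f>0$ since $\beta_\epsilon<0$; $\partial_u f = k[-\beta_\epsilon]^{k-1}\beta_\epsilon' \geq 0$ since $\beta_\epsilon'\geq 0$; and on $\partial\Omega$ we have $\varphi-u=\delta$, so $f|_{\partial\Omega} = [-\beta_\epsilon(\delta)]^k$, which is strictly less than $\inf_{\partial\Omega}H_k[\partial\Omega]$ provided $\epsilon$ is small enough, since $\beta_\epsilon(\delta)\to 0$ as $\epsilon\to 0$.

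I would run the continuity method on the family
\begin{equation*}
H_k[u^t] = (1-t)\sigma_0 + t\,[-\beta_\epsilon(\varphi-u^t)]^k, \qquad u^t\big|_{\partial\Omega}=\varphi-\delta, \quad t\in[0,1],
\end{equation*}
with a fixed small $\sigma_0>0$ chosen so that both (6.6) and (6.7) hold uniformly in $t$. At $t=0$ the equation $H_k[u^0]=\sigma_0$ admits a smooth $H_k$-admissible solution by Lemma 3.2, giving a base point for the argument. Openness of the solvable set $T\subset[0,1]$ follows from the implicit function theorem: the linearization $Lw = H^{ij}[u^{t_0}]w_{ij} + (\text{first order}) - (\partial_u f_{t_0})\,w$ is uniformly elliptic on the admissible cone and has nonpositive zeroth-order coefficient, hence is invertible on zero Dirichlet data by standard Schauder theory.

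Closedness of $T$ is the analytic heart of the argument and reduces, via Theorem 6.2, to uniform-in-$t$ bounds on $\sup_\Omega|u^t|$ and on $\inf_\Omega f_t(x,u^t)>0$. The upper bound $u^t\leq \max_{\partial\Omega}(\varphi-\delta)$ follows from the maximum principle, since an interior maximum together with $k$-admissibility forces $H_k[u^t]\leq 0$, contradicting $H_k[u^t]>0$. For the lower bound I would construct a fixed smooth strictly $H_k$-subharmonic barrier $\underline u$ with $\underline u=\varphi-\delta$ on $\partial\Omega$ and with $H_k[\underline u]$ exceeding $\sup_t f_t(\cdot,\underline u)$, which is finite for fixed $\epsilon$ because $\underline u$ is bounded; such $\underline u$ is supplied by Lemma 3.2 applied with a sufficiently large constant source. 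Comparison then gives $u^t\geq \underline u$, so $\varphi-u^t$ is uniformly bounded and $[-\beta_\epsilon(\varphi-u^t)]^k\geq \nu>0$ uniformly. Theorem 6.2 then yields a uniform $C^2$ bound along the path, which Evans--Krylov together with Schauder bootstrapping upgrade to a uniform smooth bound, closing the continuity argument at $t=1$ and producing the desired smooth $u^\epsilon$.

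The step I expect to be the main obstacle is securing the uniform positive lower bound on $f_t(x,u^t)$, which ties together the construction of the subsolution $\underline u$ with the boundary compatibility condition $f|_{\partial\Omega}<H_k[\partial\Omega]$; this compatibility is precisely why the statement restricts to sufficiently small $\epsilon$, so that the growth of the penalty is controlled at the boundary and along the entire continuity path.
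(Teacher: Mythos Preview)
Your overall approach matches the paper's exactly: verify (6.6)--(6.7) for $f(x,u)=[-\beta_\epsilon(\varphi-u)]^k$, invoke Theorem~6.2 for the a priori $C^2$ bound, upgrade via Evans--Krylov, and run the continuity method. The paper's own proof of Theorem~6.3 is in fact terser than yours and does not spell out the $C^0$ estimate along the path at all.

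There is, however, a flaw in your lower-barrier step. You propose to obtain $\underline u$ from Lemma~3.2 ``applied with a sufficiently large constant source.'' This is blocked by hypothesis~(3.1), which forces $f\le H_k[\partial\Omega]$ on $\partial\Omega$; the source in Lemma~3.2 cannot exceed $\inf_{\partial\Omega}H_k[\partial\Omega]$. Moreover the target you want $H_k[\underline u]$ to dominate, namely $\sup_t f_t(\cdot,\underline u)$, depends circularly on $\underline u$: nothing guarantees $\underline u\le\varphi$ in the interior, and wherever $\varphi-\underline u<0$ the penalty $[-\beta_\epsilon(\varphi-\underline u)]^k$ can be enormous for small $\epsilon$ (for the model $\beta_\epsilon(t)=-e^{-t/\epsilon}$ it equals $e^{-k(\varphi-\underline u)/\epsilon}$). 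So ``finite because $\underline u$ is bounded'' does not yield a usable bound.

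The fix, which the paper carries out in its proof of Theorem~6.1 (see (6.10)--(6.11)), goes in the opposite direction. Solve $H_k[u_0]=\sigma_0$ with \emph{small} $\sigma_0>0$ and boundary data $\varphi-\delta$ via Lemma~3.2, then compare only on the sublevel set $E=\{u^t\le\varphi-K\}$. On $E$ the penalty satisfies $-\beta_\epsilon(\varphi-u^t)\le-\beta_\epsilon(K)\le\sigma_0^{1/k}$ once $K$ is chosen large (for the given $\epsilon$), so $H_k[u^t]\le\sigma_0=H_k[u_0]$ there; since $u^t=\varphi-K$ on $\partial E$, comparison yields $u^t\ge u_0-C_K$ throughout. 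This supplies the uniform lower bound on $u^t$, and hence the uniform positive lower bound $\nu$ on $f_t(x,u^t)$ that you correctly identified as the crux.
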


\begin{proof}
Let $f(x, u)=-\beta_\epsilon(\varphi-u)$.
It is clear that
$$\frac{\partial f}{\partial u}=\beta_\epsilon'(\varphi-u)\geq 0.$$
On the boundary $\partial\Omega$,
$$f(x, u)=-\beta_\epsilon(\delta) <H_{k}[\partial\Omega]$$
provided $\epsilon$ is small enough.
Thus conditions (6.6), (6.7) are satisfied.
Hence by Theorem 6.2 and the regularity theory of Evans and Krylov \cite{E, Kr},
we obtain a priori estimates in $C^{3,\alpha}(\bom)$.
By the continuity method, we obtain the existence of
solutions to (6.4).
\end{proof}

\vskip10pt

\noindent{\it Proof of Theorem 6.1}.
Let $u^\epsilon$ be the smooth solution to (6.4).
In the following we show that the $C^0$ and $C^1$ bounds are
independent of $\eps>0$ small.
First we show
\beq
\sup_\Omega |u^\epsilon|\leq C
\eeq
for some constant $C>0$ independent of $\eps>0$.

Indeed, by the $H_k$-subharmonicity,
$\sup_\Omega u^\epsilon=\sup_{\partial\Omega}\varphi$. Hence,
we only need to prove $u^\eps$ is bounded from below.
By Lemma 3.1, there exists  $\sigma_0>0$ small enough such that the
Dirichlet problem
\begin{equation}
\begin{cases}
   H_k[u]=\sigma_0^k  & \text{\ \  in $\Omega$}, \\[5pt]
    u=\varphi-\delta  & \text{\ \ on $\partial\Omega$}
\end{cases}
\end{equation}
is solvable. Let $u_0$ be the unique solution to (6.10).
Let $K>0$ be a positive constant and denote
$$E=\{x \ |\   u^\epsilon(x)\leq \varphi(x)-K\}.$$
Then $E\subset \Omega$.
By the definition of $\beta_\epsilon$,
$$f=-\beta_\epsilon(\varphi-u^\epsilon)\leq -\beta_\epsilon(K)\leq\sigma_0$$
when $K$ is chosen large enough.
Hence by the comparison principle,
\beq
u^\epsilon\geq u_0-C_K
\eeq
for some constant $C_K$ depending on $\varphi$ and $K$.
Hence (6.9) holds for some constant $C>0$ independent of $\epsilon$.

\vskip 10pt

Next we show that
\beq
\sup_\Omega |\nabla u^\epsilon|\leq C
\eeq
for some constant $C>0$ independent of $\epsilon$.
The gradient estimate can be found in \cite {CNS}.
We include the proof, only to show that the upper bound is independent of $\eps$.

Let us first show that $f(x, u^{\epsilon})$ is uniformly bounded from above.
Indeed, assume that the minimum of $\beta_\epsilon(\varphi-u^{\epsilon})$ is attained at a point $x_0$.
If $x_0\in\partial \Omega$, by the boundary condition,
$\beta_\epsilon(\varphi-u^{\epsilon})(x_0)=\beta_\epsilon(0)=-1$.
If $x_0\in \Omega$, since $\beta_\epsilon$ is monotonic, we have
$$\nabla \varphi(x_0)=\nabla u^{\epsilon}(x_0), \ D^2 \varphi(x_0)\geq D^2 u^{\epsilon}(x_0).$$
Hence,
$$f(x_0, u^{\epsilon}(x_0)) =\{H_k[u^{\epsilon}](x_0)\}^{\frac{1}{k}}
\leq \{H_k[\varphi](x_0)\}^{\frac{1}{k}}.$$
Therefore,
\beqs
f(x, u^\epsilon)\leq C_0,
\eeqs
where $C_0>0$ is independent of $\epsilon$.
This uniform bound implies
\beq
\sup_{\Omega}(u^{\epsilon}-\varphi)\to 0
\eeq
as $\epsilon\to 0$.

By Corollary 3.4, there exists $v_0\in C^{0,1}(\overline\Omega)\cap \mathcal{SH}_k(\Omega)$,
\beq
H_k[v_0]=0, \   v_0|_{\partial\Omega}=\varphi-\delta.
\eeq
Hence by the comparison principle, we have $u^\epsilon\leq v_0$.
Let $E=\{u^\epsilon<\varphi-\frac{\delta}{2}\}$
and  $E_0=\{v_0<\varphi-\frac{\delta}{2}\}$.
Then $E\supset E_0$ and
$E_0$ is a neighbourhood of $\partial \Omega$, independent of $\epsilon$.
Therefore, by the argument in \cite {I2} for the boundary gradient estimate,
and by our construction of $\beta_\epsilon$, we obtain
\beq
\sup_{\partial\Omega} |\nabla u^\epsilon|\leq C_1 ,
\eeq
where $C_1$ is independent of $\epsilon$.

To obtain the global estimate, we denote for simplicity that $u=u^\epsilon$.
Following \cite {CNS} we introduce the auxiliary function
$$G=|\nabla u| e^{u}.$$
Suppose that the maximum of $G$ is attained at $x_0$.
If $x_0\in\partial\Omega$, the gradient estimates follows by (6.15).
If $x_0\in\Omega$, we may suppose that $|\nabla u|=u_1$
and $u_j=0$ for $j\geq 2$ at $x_0$.
Then at $x_0$,
\beqn
0&=&(\log G)_i=\frac{u_{1i}}{u}+u_i,\\
0&\geq& (\log G)_{ij}=\frac{u_{1ij}}{u}-\frac{u_{1i}u_{1j}}{u^2}+u_{ij}.
\eeqn
We now make use of the matrix
\beq
a_{ij}=\frac{1}{w}\left[u_{ij}-\frac{u_iu_ku_{kj}}{w(1+w)}-\frac{u_ju_ku_{ki}}{w(1+w)}
+\frac{u_iu_ju_ku_lu_{kl}}{w^2(1+w)^2}\right],
\eeq
where $w=\sqrt{1+|\nabla u|^2}$.
Let $F_k=\sigma_k(\lambda)$, where $\lambda\in\mathbb R^n$ denotes the eigenvalue vector
of the matrix $(a_{ij})$, and $\sigma_k$ denotes $k^{th}$ elementary symmetric function of $\lambda$.
It is known \cite{CNS} that
$$F_k(a_{ij})=H_k[u]=f(x, u)=-\beta_\epsilon(\varphi-u).$$
By (6.16), $u_{1i}=0$ for $i\geq 2$. By a rotation of the coordinates $(x_2, ..., x_n)$, we may further
assume that $u_{ij}$ is diagonal at $x_0$. Then $a_{ij}$ is also diagonal and
\beq
a_{11}=\frac{u_{11}}{w^3}, \  \ a_{ii}=\frac{u_{ii}}{w}, i>1.
\eeq
Set $F^{ij}=\frac{\partial F_k}{\partial a_{ij}}$. $F^{ij}$ is diagonal at $x_0$.
Assume that $u_1(x_0)>\varphi_1(x_0)$ otherwise we immediately get the gradient estimate.
Differentiating the equation with respect to $x_1$, we have
\beq
F^{ii}\frac{\partial a_{ii}}{\partial x_1}=\beta_\epsilon'(\varphi-u)\cdot (u_1-\varphi_1)\geq 0.
\eeq
By computation,
\beq
\frac{\partial a_{11}}{\partial x_1}=\frac{u_{111}}{w^3}-\frac{3u_{11}^2}{w^5},
\ \ \frac{\partial a_{ii}}{\partial x_1}=-\frac{u_1u_{11}}{w^2}a_{ii}+\frac{u_{ii1}}{w}, i>1.
\eeq
Combining (6.19) - (6.21), we have
\beq
\frac{2u_1F^{11}u_{11}^2}{w^5}+\frac{u_1u_{11}}{w^2}\sum_{i=1}^n F^{ii}a_{ii}
\leq \frac{1}{w}\sum_{i>1} F^{ii}u_{ii1}+\frac{1}{w^3}F^{11}u_{111}.
\eeq
By (6.17),
\beqs
\frac{1}{w^3}F^{11}u_{111}\leq \frac{F^{11}}{w^3}(\frac{u_{11}^2}{u_1}-u_1u_{11})
=\frac{F^{11}u_{11}^2}{u_1w^3}-u_1F^{11}a_{11}
\eeqs
and
\beqs
\frac{1}{w}\sum_{i>1}F^{ii}u_{ii1}\leq -\frac{1}{w}\sum_{i>1}F^{ii}u_1u_{ii}
=-u_1\sum_{i>1}F^{ii}a_{ii}.
\eeqs
Substituting them into (6.22) and using (6.16), we have
$$\frac{F^{11}u_{11}^2}{u_1w^5}(w^2-2)+\frac{u_1}{w^2}\sum_{i=1}^nF^{ii}a_{ii}\leq 0.$$
Since $u$ is smooth and $H_k$-subharmonic, $F^{11}>0$ and $\sum_{i=1}^nF^{ii}a_{ii}>0$.
Therefore, we have $w^2-2\leq 0$. In conclusion, there exists $C_1'$
depending only on $\sup_\Omega |u^\epsilon|$
and $\sup_{\partial\Omega}|\nabla u^\epsilon| $ and $\varphi$, but independent of $\epsilon$,
such that
$\sup_\Omega |\nabla u^\epsilon|\leq C_1'.$

\vskip 10pt

Note that the maximum bound of $|u^\epsilon|$ implies that
$$f(x, u^\epsilon)=-\beta_\epsilon(\varphi-u^\epsilon)\geq \nu_\epsilon>0$$
 for some positive constant $\nu_\epsilon$.
Hence we have the second derivative estimate \cite {I2}
\beq
\sup_\Omega |D^2 u^\epsilon|\leq C.
\eeq
By the global regularity theory of Evans-Krylov \cite{E, Kr}, we then have
\beq
\|u^\epsilon\|_{C^{3,\alpha}(\overline \Omega)}\leq C,
\eeq
where $C$ depending on
$\beta_\epsilon$, $\varphi$, $\Omega$.

\vskip10pt

To conclude the proof, we need to
show that $u^{\epsilon}$ converges to $\tilde \varphi^\delta$ and
$\tilde \varphi^\delta$ is  Lipchitz continuous.
We have shown that
$\nabla u^\epsilon$ are uniformly bounded with respect to $\epsilon$.
Hence by choosing a subsequence, $u^{\epsilon}$ converges to
a Lipchitz continuous, $H_k$-subharmonic function $\bar u$.
We claim that $\bar u=\tilde \varphi^\delta$ in $\Omega$.
Indeed, by (6.13), $\bar u\leq \varphi$ in $\Omega$, i.e, $\bar u\in S_{\varphi,\delta}$.
This implies $\bar u\leq \tilde \varphi^\delta$ in $\Omega$.
Through the definition of $\beta_\epsilon$, we also have
$H_k[\bar u]=0$ in viscosity sense in the set $G=\{\bar u<\varphi\}\subset \Omega$.
If there is $x_0\in\Omega$
such that $\bar u(x_0) <\tilde \varphi^\delta(x_0)$, then by the definition of $\tilde \varphi^\delta$,
there exists $h\in \mathcal{SH}_k(\Omega)\cap C^0(\Omega)$, such that
$\bar u(x_0)<h(x_0)$.  The set $G'=\{\bar u<h\}\subset G$ is nonempty.
Since
\beqs
 H_k[h]&\geq & H_k[\bar u]=0 \text{\ in $G'$},\\
  \bar u&=&h \text{\ on \ } \partial G',
\eeqs
by comparison, we obtain the contradiction.
Hence $\bar u=\tilde \varphi^\delta$. The theorem is proved.

\begin{rem}
We point out that the idea above can be used to prove
the approximation results for other nonlinear elliptic equations.
One example is the complex Monge-Amp\`ere equation on K\"ahler manifolds
in the next section.
\end{rem}

\section{Appendix 2: Regularization of plurisubharmonic functions on compact K\"ahler manifolds}

In this appendix, we give a new proof for the global regularization
of plurisubharmonic functions
on compact K\"ahler manifolds following the idea in last appendix.

The local regularisation of plurisubharmonic functions on domains in $\mathbb C^n$
is not difficult since the standard mollification keeps the plurisubharmonicity.
An interesting question is whether one can globally regularize plurisubharmonic functions.
The global regularisation of plurisubharmonic functions on $\mathbb C^n$ was obtained
in \cite{CS}. Since then, the problem on compact K\"ahler manifolds attracted much attention.

Let $(M, \omega)$ be a compact K\"ahler manifold.
A function $u:M\longrightarrow [-\infty, \infty)$ is called
$\omega$-plurisubharmonic (quasi-plurisubharmonic), if
$u$ is an integrable, upper semicontinuous function on $M$ and
$\omega_u=\omega+\partial\bar\partial u\geq 0$ in the distribution sense.
Denote by $PSH(M,\omega)$ the set of all  $\omega$-plurisubharmonic functions.
The problem of global regularisation refers to  whether an
$\omega$-plurisubharmonic function can be approximated by
smooth $\omega$-plurisubharmonic functions.
This approximation was first obtained in \cite{D, DP, GZ1}
under the assumption that $M$ admits a positive holomorphic line bundle,
i.e. $\omega$ is a Hodge class.
The proof used a complicated method developed by Demailly.
Later on, a simpler proof was given \cite{BK},
where the approximation holds in arbitrary compact K\"ahler case.
Here we present a new proof of this theorem here by solving penalized complex
Monge-Amp\`ere equations.

\begin{theo}
 Let $(M, \omega)$ be a compact K\"ahler manifold and $u\in PSH(M,\omega)$.
There exists a sequence of smooth $\omega$-plurisubharmonic functions $\{u_k\}$,
such that $u_k$ converges decreasingly to $u$ a.e..
\end{theo}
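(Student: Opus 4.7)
The plan is to adapt the penalisation technique of Appendix~1 to the compact K\"ahler setting, replacing the Dirichlet problem (6.4) by a sequence of globally posed complex Monge-Amp\`ere equations. First, because $u$ is upper semicontinuous and bounded above on $M$, there is a sequence of smooth functions $\varphi_j\searrow u$ with $\varphi_j\ge u+\delta_j$, $\delta_j\searrow 0$. I form the $\omega$-plurisubharmonic envelope
\[
\tilde\varphi_j=\sup\{v\in PSH(M,\omega)\ |\ v\le\varphi_j\},
\]
which is a decreasing family satisfying $u\le\tilde\varphi_j\le\varphi_j$, hence $\tilde\varphi_j\searrow u$ pointwise.

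The key step is to produce smooth $\omega$-plurisubharmonic approximants of each $\tilde\varphi_j$. Following (6.4), I fix a smooth penalty family $\beta_\epsilon$ with the properties listed in Theorem~6.1 and solve on $M$ the equation
\[
(\omega+\sqrt{-1}\,\partial\bar\partial u_\epsilon)^n=e^{-\beta_\epsilon(\varphi_j-u_\epsilon)+c_\epsilon}\,\omega^n,
\]
where $c_\epsilon$ is the unique normalising constant making the integral compatibility condition $\int_M e^{-\beta_\epsilon(\varphi_j-u_\epsilon)+c_\epsilon}\omega^n=\int_M\omega^n$ hold, and the solution is pinned down by $\int_M u_\epsilon\,\omega^n=0$. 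Because $\beta_\epsilon'\ge 0$, the right-hand side is non-decreasing in $u_\epsilon$, and a unique smooth $\omega$-plurisubharmonic solution $u_\epsilon$ exists by Yau's theorem and the continuity method.

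I would then establish $C^0$, $C^1$ and $C^2$ bounds on $u_\epsilon$ uniform in $\epsilon$, mirroring the proof of Theorem~6.1. For the upper bound, evaluating at an interior maximum of $u_\epsilon-\varphi_j$ gives $\sqrt{-1}\,\partial\bar\partial u_\epsilon\le \sqrt{-1}\,\partial\bar\partial\varphi_j$, so the equation forces $-\beta_\epsilon(\varphi_j-u_\epsilon)$ to be uniformly bounded and hence $\varphi_j-u_\epsilon\ge -C$; the lower bound comes from comparison with a fixed smooth $\omega$-plurisubharmonic barrier. The $C^1$ and Yau-type $C^2$ bounds then exploit the favourable sign supplied by $\beta_\epsilon'\ge 0$. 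By Evans-Krylov, a subsequence $u_\epsilon\to u^{(j)}$ in $C^{1,\alpha}$ with $u^{(j)}$ Lipschitz and $\omega$-plurisubharmonic. Letting $\epsilon\to 0$ in the penalty forces $u^{(j)}\le\varphi_j$, and on $\{u^{(j)}<\varphi_j\}$ the limiting equation reduces to $(\omega+\sqrt{-1}\,\partial\bar\partial u^{(j)})^n=0$, so the comparison principle for $\omega$-plurisubharmonic functions identifies $u^{(j)}=\tilde\varphi_j$.

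Finally, to realise the decreasing sequence required by the statement, I use that $\tilde\varphi_j\searrow u$ pointwise and $u_\epsilon^{(j)}\searrow\tilde\varphi_j$ as $\epsilon\to 0$: choosing $\epsilon_j\to 0$ rapidly enough so that $u_{\epsilon_j}^{(j)}<u_{\epsilon_{j-1}}^{(j-1)}$ everywhere on $M$, the functions $u_k:=u_{\epsilon_k}^{(k)}$ form a smooth, $\omega$-plurisubharmonic, decreasing sequence converging a.e.\ to $u$. The main obstacle will be the uniform $C^2$ estimate: Yau's computation produces contributions from $\beta_\epsilon'$ and $\beta_\epsilon''$, whose sup norms diverge as $\epsilon\to 0$. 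Exactly as in Appendix~1, the saving feature is that these quantities are evaluated at $\varphi_j-u_\epsilon\ge 0$, where the penalty keeps $\beta_\epsilon$ and its derivatives uniformly controlled; turning this one-sided control into a genuine Laplacian bound on $\log[(\omega+\sqrt{-1}\,\partial\bar\partial u_\epsilon)^n/\omega^n]$ is the heart of the argument.
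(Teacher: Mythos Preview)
Your overall strategy matches the paper's, but the penalised equation you wrote down does not do what you need. With the Appendix~1 convention ($\beta_\epsilon<0$, $\beta_\epsilon(t)\to 0$ for $t>0$), your right-hand side $e^{-\beta_\epsilon(\varphi_j-u_\epsilon)+c_\epsilon}$ tends to $e^{c_\epsilon}$, not to $0$, on the non-contact set $\{u_\epsilon<\varphi_j\}$; and the compatibility condition forces $c_\epsilon\to 0$. So the limiting equation there is $(\omega+\sqrt{-1}\,\partial\bar\partial u^{(j)})^n=\omega^n$, not $=0$, and the comparison argument identifying the limit with the envelope $\tilde\varphi_j$ collapses. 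The paper instead places the penalty \emph{as} the density, solving $(\omega+\sqrt{-1}\,\partial\bar\partial u_{k,\epsilon})^n=\beta_\epsilon(\varphi_k-u_{k,\epsilon})\,\omega^n$ with the positive choice $\beta_\epsilon(t)=e^{-t/\epsilon}$; then the right-hand side is strictly increasing in $u$ (so Aubin--Yau applies with no normalising constant) and genuinely tends to $0$ where $u<\varphi_k$.

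Second, you should not expect a $C^2$ bound uniform in $\epsilon$, and you do not need one. The paper establishes only $\epsilon$-independent $C^0$ and $C^1$ estimates (the latter via a Blocki-type gradient computation, where the crucial term $\tfrac{\beta_\epsilon'}{\beta_\epsilon}\,\mathrm{Re}\sum(\varphi_k-u)_j u_{\bar j}$ has a favourable sign once $|\nabla u|$ dominates $|\nabla\varphi_k|$). Uniform Lipschitz continuity already gives uniform convergence $u_{k,\epsilon}\to\tilde\varphi_k$. To get a decreasing smooth sequence you then need strict gaps: the paper arranges $\varphi_k\ge\varphi_{k+1}+\tfrac{1}{(k+1)^2}$ from the start and uses the comparison principle on the set $\{u_{k,\epsilon}<u_{k+1,\epsilon}+\tfrac{1}{(k+1)^2}\}$ to obtain $u_{k,\epsilon}\ge u_{k+1,\epsilon}+\tfrac{1}{(k+1)^2}$ for every fixed $\epsilon$; this, together with uniform convergence, is what makes a monotone diagonal extraction possible. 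Your claim that $u_\epsilon^{(j)}\searrow\tilde\varphi_j$ in $\epsilon$ is unsubstantiated, and without the built-in gaps your ``choose $\epsilon_j$ fast enough'' step has nothing to bite on.
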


As in the previous section, the sequence $u_k$ is the solution
to an associated penalty problem, namely equation (7.3) below.
In Theorem 7.1, we  may assume that $u$ is bounded.
Otherwise, we may choose the sequence of
$\omega$-plurisubharmonic functions $u_j=\max\{u, -j\}$, $j=1, 2, 3,...$,
which converges to $u$ monotone decreasingly.
We also assume that $\omega_u\geq \sigma^\frac{1}{n}\omega$ for some $\sigma>0$.
For if not, we can replace $\omega_u$
by $\omega +(1- \sigma^\frac{1}{n})\sqrt{-1}\partial\bar\partial u$.
This implies that
\begin{equation}
(\omega+\sqrt{-1}\partial\bar\partial u)^n\geq \sigma\omega^n
\end{equation}
in viscosity sense.

The following theorem of Aubin and Yau {\cite{Au, Y}} will be needed.

\begin{theo}
Let $(M,\omega)$ be a compact K\"ahler manifold. Let $F$ be a smooth function defined
on $M\times \mathbb R$ with $\frac{\partial F}{\partial t}\geq 0$.
Suppose that, for some smooth function $\psi$ defined on $M$,
$\int_M e^{F(x,\psi(x))}\omega^n=vol(M)$.
Then there exists a smooth function $\varphi$ on $M$ such that
\begin{equation}
(\omega+\sqrt{-1}\partial\bar\partial \varphi)^n=e^{F(x, \varphi(x))}\omega^n,
\end{equation}
and $\omega+\sqrt{-1}\partial\bar\partial \varphi$ defines a K\"ahler metric.
Furthermore, any other smooth function satisfying the same property differs
from $\varphi$ by only a constant.
\end{theo}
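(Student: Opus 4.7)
The plan is to prove Theorem 7.2 by the classical continuity method. Consider the one-parameter family of complex Monge-Amp\`ere equations
\[
(\omega+\sqrt{-1}\partial\bar\partial\varphi_t)^n = e^{tF(x,\varphi_t)+(1-t)F(x,\psi(x))}\omega^n,\qquad t\in[0,1],
\]
and let $S\subset[0,1]$ denote the set of $t$ for which a smooth solution $\varphi_t$ exists with $\omega+\sqrt{-1}\partial\bar\partial\varphi_t$ a K\"ahler form. At $t=0$ the right-hand side is the smooth positive function $e^{F(x,\psi(x))}$, which integrates to $\mathrm{vol}(M)$ by hypothesis; Yau's theorem (the case $F\equiv F(x)$ of the present statement) then yields a smooth solution $\varphi_0$, so $0\in S$. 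It remains to show $S$ is open and closed in $[0,1]$, which forces $1\in S$ and gives the desired $\varphi=\varphi_1$.

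Openness follows from the implicit function theorem in H\"older spaces applied to the nonlinear map $\varphi\mapsto\log\bigl[(\omega+\sqrt{-1}\partial\bar\partial\varphi)^n/\omega^n\bigr]-tF(x,\varphi)-(1-t)F(x,\psi)$. Its linearization at a solution $\varphi_t$ is $\Delta_{\omega_{\varphi_t}}h-t\,\partial_tF(x,\varphi_t)h$; since $\partial_tF\geq 0$, the zeroth-order coefficient is non-positive, so the maximum principle excludes a nontrivial kernel, and standard Fredholm theory gives invertibility and hence local solvability along the path.

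Closedness rests on a priori $C^0$, $C^2$, and $C^{2,\alpha}$ estimates uniform in $t$. The $C^0$ bound exploits the monotonicity crucially: at a maximum point $x_0$ of $\varphi_t$ we have $\omega+\sqrt{-1}\partial\bar\partial\varphi_t(x_0)\leq\omega(x_0)$, so the equation forces $tF(x_0,\varphi_t(x_0))+(1-t)F(x_0,\psi(x_0))\leq 0$, and $\partial_tF\geq 0$ converts this into an upper bound on $\varphi_t(x_0)$; a symmetric argument at a minimum gives the lower bound. The $C^2$ estimate is Yau's classical calculation: applying the maximum principle to $e^{-A\varphi_t}\mathrm{tr}_\omega(\omega_{\varphi_t})$ with $A$ large enough to dominate the bisectional-curvature contribution from $(M,\omega)$, and combining the resulting bound on $\Delta_\omega\varphi_t$ with the Monge-Amp\`ere structure to control each eigenvalue of $\omega_{\varphi_t}$. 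Evans-Krylov then upgrades to $C^{2,\alpha}$, and Schauder bootstrap produces $C^\infty$ bounds.

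The main technical obstacle is the $C^2$ estimate: controlling $\Delta_\omega\varphi_t$ from its $C^0$ bound and the Monge-Amp\`ere equation requires the delicate computation of $\Delta_{\omega_{\varphi_t}}\mathrm{tr}_\omega(\omega_{\varphi_t})$ in a moving unitary frame, together with the arithmetic-geometric-means inequality applied to the eigenvalues of $\omega_{\varphi_t}$ and a careful choice of auxiliary exponential barrier. Once this is in hand, the remaining regularity steps are routine. Finally, uniqueness up to an additive constant follows from the maximum principle: if $\varphi_1,\varphi_2$ are two smooth solutions and $\varphi_1-\varphi_2$ attains its maximum at $x_0$, then $\omega_{\varphi_1}(x_0)\leq\omega_{\varphi_2}(x_0)$, so comparison of the two Monge-Amp\`ere equations at $x_0$ combined with $\partial_tF\geq 0$ forces $\varphi_1-\varphi_2$ to be constant on $M$.
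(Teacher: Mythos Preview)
The paper does not supply its own proof of this statement: Theorem 7.2 is quoted verbatim as the classical result of Aubin and Yau and is used only as a black box in Appendix 2. There is therefore no proof in the paper to compare your proposal against.

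That said, your outline is the standard continuity-method route and is correct in architecture, but two steps are too quick under the hypothesis $\partial F/\partial t\geq 0$ (not strict). First, your $C^0$ estimate by the maximum principle yields only $F(x_0,\varphi_t(x_0))\leq C/t$ at a maximum point; turning this into a bound on $\varphi_t(x_0)$ requires $F(x,s)\to+\infty$ as $s\to+\infty$, which is not assumed. When $F$ is independent of the second variable (Yau's case) your argument gives nothing, and the $C^0$ bound is then the genuinely hard step, obtained by Moser iteration or Alexandrov's maximum principle rather than a pointwise comparison. You invoke Yau at $t=0$, but the same difficulty recurs along the path unless $\partial F/\partial t>0$ strictly. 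Second, your uniqueness argument does not close: at a maximum of $\varphi_1-\varphi_2$ you obtain $F(x_0,\varphi_1(x_0))\leq F(x_0,\varphi_2(x_0))$, which under mere monotonicity does not force $\varphi_1(x_0)\leq\varphi_2(x_0)$, let alone that the difference is constant. The usual proof writes $\omega_{\varphi_1}^n-\omega_{\varphi_2}^n$ telescopically as $\sqrt{-1}\partial\bar\partial(\varphi_1-\varphi_2)\wedge T$ for a positive $(n-1,n-1)$-form $T$, integrates against $\varphi_1-\varphi_2$, and compares signs.
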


Since $u$ is upper semicontinuous, there exist a sequence of smooth functions $\varphi_k$
which converge decreasingly to $u$. Furthermore, we may assume that
$\varphi_k\geq \varphi_{k+1}+\frac{1}{(k+1)^2}$.
For any integer $k$ and $\epsilon>0$, we consider the equation
\begin{equation}
(\omega+\sqrt{-1}\partial\bar\partial u)^n=\beta_{\epsilon}(\varphi_k-u)\omega^n,
\end{equation}
where  $\beta_\epsilon(t)$ is the smooth function given in Appendix 1.
It is easy to see that $\beta_{\epsilon}(0)=1$, so $\beta_{\epsilon}(\varphi_k-u)$ satisfies
condition (7.2) in Theorem 7.1.
For any $k$ and $\epsilon>0$, there exists a smooth solution
$u_{k,\epsilon}\in PSH(M,\omega)$ to the above equation (7.3).

We show that $u_{k,\epsilon}$ converges to $u$. First, we claim that
\begin{equation}
\sup_M\{u_{k,\epsilon}-\varphi_k,0\}\to 0
\end{equation}
as $\epsilon\to 0$. By the definition of $\beta_\epsilon$,
it suffices to show that $\beta_{\epsilon}(\varphi_k-u_{k,\epsilon})$
is uniformly bounded with respect to $\epsilon$.
Assume that $\beta_{\epsilon}(\varphi_k-u_{k,\epsilon})$ attains its maximum at $p\in M$.
Then $\sqrt{-1}\partial\bar\partial (\varphi_k-u_{k,\epsilon})\geq 0$. It follows that
\begin{equation}
\beta_{\epsilon}(\varphi_k-u_{k,\epsilon})=\frac{(\omega+\sqrt{-1}\partial\bar\partial u_{k,\epsilon})^n}{\omega^n}\leq
\frac{(\omega+\partial\bar\partial \varphi_{k})^n}{\omega^n}\leq C
\end{equation}
for some $C$ independent of $\epsilon$.
Next, we consider the lower bound of $u_{k, \epsilon}$.
For any $\delta>0$, let $\Omega_\delta=\{x\in M: u_{k,\epsilon}<\varphi_k-\delta\}$.
It is clear that
$$\beta_{\epsilon}(\varphi_k-u_{k,\epsilon})\leq e^{-\epsilon^{-1}\delta}<\sigma$$
as $\epsilon\to 0$. Then by (7.1), we have
\begin{eqnarray*}
&&u_{k,\epsilon}(x)\geq u(x)-\delta, \ x\in \partial \Omega_\delta\\
&&(\omega+\sqrt{-1}\partial\bar\partial u)^n> (\omega+\sqrt{-1}\partial\bar\partial u_{k,\epsilon})^n, \ x\in\Omega_\delta
\end{eqnarray*}
in viscosity sense. We claim that $u_{k,\epsilon}\geq u-\delta$ in $\Omega_\delta$.
By contradiction, suppose $m=\min(u_{k,\epsilon}-u+\delta)<0$ is attained at some point $p\in\Omega_\delta$.
Let $v=u_{k,\epsilon}-m$. It follows that $v(p)=u(p)$, $v\geq u$ in $\Omega_\delta$.
Hence, as a smooth test function $v$ satisfies
$$(\omega+\sqrt{-1}\partial\bar\partial v)^n\geq (\omega+\sqrt{-1}\partial\bar\partial u)^n$$
at $p$. This is a contradiction. Letting $\delta\to 0$, we obtain
$u_{k,\epsilon}\geq u$ on $M$. Along with (7.5), we have $u_{k,\epsilon}$ converges to $u$ as
$\epsilon\to 0$ and $k\to \infty$.
It is also clear that $\|u_{k,\epsilon}\|_{L^\infty}\leq C$ for some $C$ independent of $\epsilon$.

\vskip 10pt
In order to obtain a decreasing sequence,
we show that $u_{k, \epsilon}$ converges uniformly to a limit as $\epsilon\to 0$. We need
the following gradient estimate.

\begin{lem}
Let $-B$ be the lower bound for the bisectional curvature, where
$B\geq 0$. There exists $C>0$ depending on $\varphi_k$, $B$, and $\|u\|_{L^\infty}$,
but independent of $\epsilon$, such that
$$|\nabla u_{k,\epsilon}|\leq C.$$
\end{lem}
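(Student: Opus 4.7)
The plan is to perform a maximum-principle gradient estimate in the spirit of Yau, Aubin, and B\l{}ocki for the complex Monge-Amp\`ere equation, with the novelty being the use of the monotonicity of $\beta_\epsilon$ (rather than a $C^1$ bound on $\log\beta_\epsilon$, which blows up as $\epsilon\to 0$) to control the third-order contribution from the right-hand side. Write $F = \beta_\epsilon(\varphi_k - u)$, $g_u = \omega + \sqrt{-1}\,\partial\bar\partial u$, and $\mathcal{L}_u = g_u^{i\bar j}\partial_i\partial_{\bar j}$. I will apply the maximum principle to the test function $Q = \log(1 + |\nabla u|^2) - Au$ at its maximum point $p\in M$, where $A$ is a large constant depending on $B$, $\|u\|_{L^\infty}$, and $\|\varphi_k\|_{C^2}$ but not on $\epsilon$, and work in $\omega$-normal holomorphic coordinates centred at $p$.

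The first step is the Bochner-type inequality
$$\mathcal{L}_u|\nabla u|^2 \;\geq\; |\partial\bar\partial u|^2_{g_u} + |\partial\partial u|^2_{g_u} \;-\; B\,|\nabla u|^2\,\text{tr}_{g_u}(\omega) \;+\; 2\,\text{Re}\bigl\langle\bar\partial u,\, \partial \log F\bigr\rangle,$$
in which the first two terms are manifestly nonnegative, the third uses the bisectional curvature bound $-B$, and the fourth is obtained by differentiating the equation $\log\det g_u = \log F + \log\det\omega$, which yields $g_u^{k\bar l}u_{ik\bar l}(p) = F_i(p)/F(p)$ in our coordinates. Substituting $F = \beta_\epsilon(\varphi_k - u)$ gives
$$2\,\text{Re}\bigl\langle\bar\partial u,\, \partial\log F\bigr\rangle \;=\; 2\,\frac{\beta'_\epsilon}{\beta_\epsilon}\bigl(\text{Re}\langle\bar\partial u, \partial\varphi_k\rangle - |\nabla u|^2\bigr).$$

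This is the crucial calculation: because $\beta_\epsilon > 0$ and $\beta'_\epsilon \leq 0$, the ratio $\beta'_\epsilon/\beta_\epsilon$ is nonpositive, so the coefficient of $-|\nabla u|^2$ is negative and produces the large positive contribution $2|\beta'_\epsilon/\beta_\epsilon|\,|\nabla u|^2$. Cauchy-Schwarz followed by Young's inequality absorbs the cross term into this leading piece, yielding
$$2\,\text{Re}\bigl\langle\bar\partial u,\, \partial\log F\bigr\rangle \;\geq\; \bigl|\beta'_\epsilon/\beta_\epsilon\bigr|\bigl(|\nabla u|^2 - |\nabla\varphi_k|^2\bigr),$$
which is nonnegative whenever $|\nabla u|^2(p) > |\nabla\varphi_k|^2$. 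Thus the potentially singular factor $\beta'_\epsilon/\beta_\epsilon$ is eliminated from the estimate and this term may simply be discarded. With the bad contribution gone, the argument is classical: combine with $\mathcal{L}_u u = n - \text{tr}_{g_u}(\omega)$, use the critical-point relation $\partial_i Q(p) = 0$ to write $\partial_i|\nabla u|^2 = A(1+|\nabla u|^2)\partial_i u$, apply the pointwise inequality $|\partial u|^2_{g_u} \leq |\nabla u|^2\,\text{tr}_{g_u}(\omega)$ (immediate from diagonalizing $g_u$), and exploit the positive terms $|\partial\bar\partial u|^2_{g_u} + |\partial\partial u|^2_{g_u}$ in a B\l{}ocki-type rearrangement to conclude $|\nabla u|^2(p) \leq C$ for a constant $C$ depending only on $A$, $B$, $n$, $\|u\|_{L^\infty}$, and $\|\varphi_k\|_{C^2}$; this bound is then global because $p$ maximizes $Q$ and $\|u\|_{L^\infty}$ is already uniformly controlled in $\epsilon$.

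The main obstacle is precisely the third-order calculation just described: a classical gradient estimate depending on a uniform $C^1$ bound on $\log F = \log\beta_\epsilon(\varphi_k - u)$ would be useless, since $\beta'_\epsilon/\beta_\epsilon \to -\infty$ as $\epsilon \to 0$ for exponential $\beta_\epsilon$. Pinning down the sign of this term via the monotonicity of $\beta_\epsilon$ and using Young's inequality to absorb the cross term is the only place where the penalty structure of the equation enters the argument; the remaining steps are an entirely routine adaptation of B\l{}ocki's maximum-principle gradient estimate.
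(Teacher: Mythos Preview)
Your approach is essentially the paper's: a B\l{}ocki-style maximum principle applied to a logarithmic gradient quantity, with the one nontrivial point being the observation that the third-order term
\[
2\,\mathrm{Re}\bigl\langle\bar\partial u,\,\partial\log F\bigr\rangle
=\frac{2\beta'_\epsilon}{\beta_\epsilon}\bigl(\mathrm{Re}\langle\bar\partial u,\partial\varphi_k\rangle-|\nabla u|^2\bigr)
\]
is nonnegative once $|\nabla u|\geq |\nabla\varphi_k|$, because $\beta'_\epsilon/\beta_\epsilon\leq 0$. The paper makes exactly this observation and then discards the term; your identification of this as the crux of the $\epsilon$-independence is spot on.

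The only discrepancy is the choice of auxiliary function. The paper uses $\alpha=\log|\nabla u|^2+\gamma(u)$ with the \emph{quadratic} $\gamma(u)=(B+3)u-\|u\|_{L^\infty}^{-1}u^2$, whereas you take the linear weight $-Au$. In B\l{}ocki's scheme the strict concavity $\gamma''<0$ is not cosmetic: after the absorption (your ``B\l{}ocki-type rearrangement'') one arrives at
\[
0\ \geq\ (\gamma'-B)\,\mathrm{tr}_{g_u}\omega\ -\ \gamma''\,|\partial u|^2_{g_u}\ -\ (n+2)\gamma'\ -\ 2,
\]
and it is precisely the term $-\gamma''|\partial u|^2_{g_u}>0$ that, once $\mathrm{tr}_{g_u}\omega$ and hence each eigenvalue of $g_u$ is bounded, yields the bound on $|\nabla u|^2$ itself. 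With $\gamma''=0$ this term is absent and the inequality only controls $\mathrm{tr}_{g_u}\omega$, which does not by itself bound the gradient. So your linear weight would need to be upgraded to a quadratic (or exponential) one to close the argument; this is a routine fix and not a conceptual gap, but it is worth being aware that the ``entirely routine'' endgame does depend on this choice.
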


\begin{proof}
The proof is a generalisation of the gradient estimate by Blocki \cite{Bl}.
Denote $u=u_{k,\epsilon}$ and $f(x, u)=\beta_{\epsilon}(\varphi_k-u_{k,\epsilon})$. Following \cite{Bl},
we consider the auxiliary function
$$\alpha=\log|\nabla u|^2+\gamma(u),$$
where
$\gamma$ is a function of $u$.
Assume that $\alpha$ attains the maximum at some $p\in M$. Near $p$,
$\omega=\partial\bar\partial g$ for some smooth plurisubharmonic function $g$.
Write $v=g+u$. We can choose holomorphic charts $\{z_1, ..., z_n\}$ near $p$ such that
\begin{equation}
g_{i\bar j}=\delta_{i\bar j},\ g_{i\bar j l}=0
\end{equation}
and $(u_{i\bar j})$ is diagonal. By the computation in \cite{Bl} with the lower bound of
the bisectional curvature, it follows
\begin{eqnarray}
0\geq \sum_{l}\frac{\alpha_{l\bar l}}{v_{l\bar l}}&\geq &
(\gamma'-B)\sum_{l}\frac{1}{v_{l\bar l}}
+\frac{2}{|\nabla u|^2}Re\{\sum_{j}\frac{\partial \log f}{\partial z_j}u_{\bar j}\}
+\frac{1}{|\nabla u|^2}\sum_{l, j}\frac{|u_{jl}|^2}{v_{l\bar l}}
\nonumber\\
&&-[(\gamma')^2+\gamma'']\sum_l \frac{|u_{l}|^2}{v_{l\bar l}}-n\gamma'
\end{eqnarray}
It is shown in \cite{Bl} that
\begin{equation}
\frac{1}{|\nabla u|^2}\sum_{l, j}\frac{|u_{jl}|^2}{v_{l\bar l}}
\geq (\gamma')^2\sum_l \frac{|u_{l}|^2}{v_{l\bar l}}-2\gamma'-\frac{2}{|\nabla u|^2}.
\end{equation}
Assume that $|\nabla u|\geq C\geq 1$ is large, depending on $\varphi_k$,
so that by definition of $f$,
$$\frac{2}{|\nabla u|^2}Re\{\sum_{j}\frac{\partial \log f}{\partial z_j}u_{\bar j}\}
=\frac{2}{|\nabla u|^2}\frac{\beta'_\epsilon}{\beta_\epsilon}Re\{\sum_{j}\frac{\epsilon^{-1}\partial (\varphi_k-u)}{\partial z_j}u_{\bar j}\}\geq 0.$$
Then we have
\begin{equation}
0\geq (\gamma'-B)\sum_{l}\frac{1}{v_{l\bar l}}
-\gamma''\sum_l \frac{|u_{l}|^2}{v_{l\bar l}}-(n+2)\gamma'-2.
\end{equation}
Choose
$$\gamma(u)=(B+3)u-\frac{1}{\|u\|_{L^\infty}}u^2.$$
Then
\begin{equation}
B+1\leq\gamma'=B+3-\frac{2}{\|u\|_{L^\infty}}u\leq B+5,\ \gamma''=-\frac{2}{\|u\|_{L^\infty}}u.
\end{equation}
By (7.9),
$$\sum_{l}\frac{1}{v_{l\bar l}}+\frac{2}{\|u\|_{L^\infty}}\sum_l \frac{|u_{l}|^2}{v_{l\bar l}}\leq D:=(n+2)(B+5)+2.$$
Therefore $\frac{1}{v_{l\bar l}}\leq D$. Then $v_{l\bar l}\leq \sup f \cdot D^{n-1}$.
Using (7.10) again, we have
$$|\nabla u|^2\leq \frac{\|u\|_{L^\infty}\sup f \cdot D^{n}}{2}$$
at $p$. Combing this with the definition of $\alpha$, we finished the proof of the Lemma.
\end{proof}

By the above lemma, $u_{k, \epsilon}$ converges uniformly to
a limit $\tilde \varphi_k$ as $\epsilon\to 0$.
To get a decreasing convergent sequence, it remains to prove that
$\tilde\varphi_k\geq \tilde\varphi_{k+1}+\frac{1}{(k+1)^2}$.
We claim that $u_{k, \epsilon}\geq  u_{k+1, \epsilon}+\frac{1}{(k+1)^2}$.
Otherwise, $E_{k,\epsilon}=\{u_{k, \epsilon}< u_{k+1, \epsilon}+\frac{1}{(k+1)^2}\}$ is an open set.
Then $\varphi_k-u_{k,\epsilon}\geq \varphi_{k+1}-u_{k+1, \epsilon}$, i.e.,
$$(\omega+\sqrt{-1}\partial\bar\partial u_{k,\epsilon})^n\leq
(\omega+\sqrt{-1}\partial\bar\partial u_{k+1, \epsilon})^n$$
in $E_k$. The contradiction follows from $u_{k, \epsilon}=u_{k+1, \epsilon}$
on $\partial E_{k,\epsilon}$.
Letting $\epsilon\to 0$,
we obtain $\tilde\varphi_k\geq \tilde\varphi_{k+1}+\frac{1}{(k+1)^2}$,
so the sequence $\{u_{k,\epsilon}\}$ is monotone decreasing.


\begin{thebibliography}{999}


\bibitem{A} Ash, R. B.,
Measure, Integration and Functional Analysis, Academic Press, 1972.

\bibitem{Au} Aubin, T.,
\'Equations du type Monge-Amp\`ere surles vari\'et\'es k\"hleriennes compactes.
C. R. Acad. Sci. Paris S�r. 283(1976), 119-121.

\bibitem{BEGZ}Boucksom, S., Eyssidieux, P., Guedj, V. and Zeriahi, A,
Monge-Amp\`ere equations in big cohomology classes,
Acta Math. 205(2010), 199-262.

\bibitem{Bl} Blocki, Z.,
A gradient estimate in the Calabi-Yau theorem,
Math. Ann. 344(2009), 317-327.

\bibitem {BT1} Bedford, E. and Taylor, B.A.,
The Dirichlet problem for a complex Monge-Amp\`ere equation,
Invent. Math. 37(1976), 1-44.

\bibitem {BT2} Bedford, E. and Taylor, B.A.,
A new capacity for plurisubharmonic functions,  Acta Math. 149 (1982),  1-40.

\bibitem{BK} Blocki, Z. and Kolodziej, S.,
On regularization of plurisubharmonic functions on manifolds,
Proc. Amer. Math. Soc. 135 (2007), no. 7, 2089-2093

\bibitem {CNS} Caffarelli, L., Nirenberg, L. and Spruck, J.,
The Dirichlet problem for nonlinear second-order elliptic equations. V.
the Dirichlet problem for Weingarten hypersurfaces,
Comm. Pure. Appl. Math. (1) 41(1988), 47-70.
		
\bibitem {Ce1} Cegrell, U.,
Discontinuity of the complex Monge-Amp\`ere operator,
C. R. Acad. Sci. Paris Ser. I Math. 296 (1983),   869--871.

\bibitem {Ce2} Cegrell, U.,
Pluricomplex energy, Acta Math. 180 (1998),  187--217.

\bibitem {CCD} Chazal, F., Cohen-Steiner, D. and Merigot, Q.,
Boundary measures for geometric inference,
Found. Comput. Math. 10 (2010),   221--240.

\bibitem {CCL} Chazal, F., Cohen-Steiner, D., Lieutier, A. and Thibert, B.,
Stability of Curvature Measures,
Computer Graphics Forum, 28 (2009), 1485--1496.

\bibitem {CH}  Colesanti, A. and Hug, D.,
Steiner type formulae and weighted measures of singularities for semi-convex functions, Trans. Amer. Math. Soc. 352 (2000),  3239--3263.

\bibitem {CM}  Colesanti, A. and Manselli, P.,
Geometric and isoperimetric properties of sets of positive reach in $E^d$,
Atti Semin. Mat. Fis. Univ., 57 (2010), 97--113.

\bibitem{CS} Cegrell, U. and Sadullaev, A.,
Approximation of plurisubharmonic functions and Dirichlet problem
for the complex Monge-Amp\`ere operator,
Math. Scand. 71(1992), 62-68.


\bibitem {DTW} Dai, Q.Y., Trudinger, N.S. and Wang, X.-J.,
The mean curvature measure,
Jour. Euro. Math. Soc. (3) 14(2012), 779-800.

\bibitem{D} Demailly, J. P.,
Regularization of closed positive currents and intersection theory,
J. Alg. Geom. 1 (1992), 361-409.

\bibitem{DP} Demailly, J. P. and Paun, M.,
Numerical characterization of the K\"ahler cone of a compact K\"ahler manifold,
Ann. Math. 159 (2004), 1247-1274.

\bibitem{E} Evans, L. C,
Classical solutions of fully nonlinear, convex, second-order elliptic equations, Comm.Pure Appl. Math.35(1982), 333–363.

\bibitem {F} Federer, H.,
Curvature measures,
Trans. Amer. Math. Soc. 93(1959), 418-491.

\bibitem {G1} Giusti, E.,
       On the equation of surfaces of prescribed mean curvature,
       Existence and uniqueness without boundary conditions,
       Invent. Math. 46(1978), 111-137.

\bibitem {G2} Giusti, E.,
       Generalized solutions for the mean curvature equation,
       Pacific J. Math., 88(1980), 297-321.

\bibitem{GZ1} Guedj, V. and Zeriahi, A.,
Intrinsic capacities on compact K\"ahler manifolds,
J. Geom. Anal.15 (2005), 607-639.

\bibitem {HKM} Heinonen, J., Kilpelainen T. and Martio, O.,
Nonlinear potential theory of degenerate elliptic equations, Oxford Univ. Press, 1993.

\bibitem {HGW} Hug, D., Last, G. and Weil, W.,
A local Steiner-type formula for general closed sets and applications,
Math. Z. 246 (2004),   237--272.

\bibitem {I1} Ivochkina, N.,
Solutions of the Dirichlet problem for equations of mth order curvature,
Math. USSR-Sb 67(1990), 317-339.

\bibitem {I2} Ivochkina, N.,
The Dirichlet problem for the curvature equation of order $m$,
Leningrad Math. J. (3) 2(1991), 631-654.

\bibitem {Kl} Klimek, M.,
Pluripotential Theory, Oxford University Press, New York, 1991.

\bibitem {Kol} Kolodziej, S.,
The complex  Monge-Amp\`ere equation,
Acta Math. 180(1998), 69-117.

\bibitem{Kr} Krylov, N.V.,
Boundedly inhomogeneous elliptic and parabolic equations,
Izv. Akad. Nauk SSSR Ser. Mat. 46(1982),487–523.

\bibitem {K} Korevaar, N.,
A priori interior gradient bounds for solutions to elliptic Weingarten equations,
Ann. Inst. H. Poincare Anal. Non Lineaire 4(1987), 405-421.

\bibitem {L} Lee, K.,
The obstacle problem for Monge-Amp\`ere equation,
Comm. Part. Diff. Eqn. 26(2001), 33-42.

\bibitem {Re} Reilly, R.C.,
On the Hessian of a function and curvatures of its graph,
Michigan Math. J. 20(1973),   373--383.

\bibitem {S} Schneider, R.,
Convex bodies: the Brunn-Minkowski Theory,
Cambridge Univ. Press, Cambridge(1993).

\bibitem {Ta1} Takimoto, K.,
Isolated singularities for some types of curvature equations,
J. Diff. Eqns. 197 (2004),  275--292.

\bibitem {Ta2} Takimoto, K.,
Singular sets for curvature equation of order k,
J. Math. Anal. Appl. 309 (2005),   227--237.

\bibitem {T1} Trudinger, N.S.,
The Dirichlet problem for the prescribed  curvature equations,
Arch. Rational. Mech. Anal. 111(1990), 153-179.

\bibitem {T2} Trudinger, N.S.,
A priori bounds and necessary conditions for solvability of prescribed curvature equations,
Manuscripta. Math. 67(1990), 99-112.

\bibitem {TW1} Trudinger, N.S. and Wang, X.-J.,
Hessian measures I,
Topol. Methods Nonlinear Anal. 10(1997), 225-239.

\bibitem {TW2} Trudinger, N.S. and Wang, X.-J.,
Hessian measures II,
Ann. Math. 150(1999), 579-604.

\bibitem {W1}  Wang, X.-J.,
Interior gradient estimates for mean curvature equations,
Math. Z. 228 (1998), 73-81.

\bibitem {W2}  Wang, X.-J.,
The k-Hessian equation, in {\it Geometric analysis and PDEs},
 Lecture Notes in Math., vol. 1977, (2009) 177--252.

\bibitem{Y}
Yau, S.T.,
On the Ricci curvature of a compact K\"ahler manifold and
the complex Monge-Amp\`ere equation. I",
Comm. Pure. Appl. Math. 31(1978), 339-411.


\end{thebibliography}
\end{document}